\newcommand\datver[1]{\def\datverp%
 {\par\boxed{\boxed{\text{#1; Run: \today}}}}}
\numberwithin{equation}{section}
\newtheorem{lemma}{Lemma}[section]
\newtheorem{proposition}[lemma]{Proposition}
\newtheorem{corollary}[lemma]{Corollary}
\newtheorem{Theorem}[lemma]{Theorem}
\newtheorem{theorem}[lemma]{Theorem}
\newtheorem{non-theorem}[lemma]{Non-Theorem}
\newtheorem{example}[lemma]{Example}
\newtheorem{definition}[lemma]{Definition}
\newtheorem{remark}[lemma]{Remark}
\newtheorem{assumption}{Assumption}
\newtheorem*{acknowledgements}{Acknowledgements}
\newcommand\cf{cf\@. }
\newcommand\pa{ \partial}
\newcommand{\eq}[1][r]
   {\ar@<-3pt>@{-}[#1]
    \ar@<-1pt>@{}[#1]|<{}="gauche"
    \ar@<+0pt>@{}[#1]|-{}="milieu"
    \ar@<+1pt>@{}[#1]|>{}="droite"
    \ar@/^2pt/@{-}"gauche";"milieu"
    \ar@/_2pt/@{-}"milieu";"droite"}
\newcommand\bbB{\mathbb B}
\newcommand\bbC{\mathbb C}
\newcommand\bbN{\mathbb N}
\newcommand\bbR{\mathbb R}
\newcommand\bbS{\mathbb S}
\newcommand\bbT{\mathbb T}
\newcommand\bbZ{\mathbb Z}
\newcommand\cS{\mathcal{S}}
\newcommand\cU{\mathcal{U}}
\newcommand\tcU{\widetilde{\mathcal{U}}}
\newcommand\cV{\mathcal{V}}
\newcommand\tV{\widetilde{V}}
\newcommand\tM{\widetilde{M}}
\newcommand\tX{\widetilde{X}}
\newcommand\tE{\widetilde{E}}
\newcommand\tF{\widetilde{F}}
\newcommand\tf{\widetilde{f}}
\newcommand\tP{\widetilde{P}}
\newcommand\tQ{\widetilde{Q}}
\newcommand\tS{\widetilde{S}}
\newcommand\tnu{\widetilde{\nu}}
\newcommand\tp{\widetilde{p}}
\newcommand\tih{\widetilde{h}}
\newcommand\End{\operatorname{End}}
\newcommand\cF{\mathcal{F}}
\newcommand\cG{\mathcal{G}}
\newcommand\cI{\mathcal{I}}
\newcommand\CI{\mathcal{C}^{\infty}}
\newcommand\Diff{\operatorname{Diff}}
\newcommand\pr{\operatorname{pr}}
\newcommand\inv{\operatorname{inv}}
\newcommand\Id{\operatorname{Id}}
\newcommand\SN{\operatorname{SN}}
\newcommand\ff{\operatorname{ff}}
\newcommand\ph{\operatorname{ph}}
\newcommand\ad{\operatorname{ad}}
\newcommand\cusp{\operatorname{cu}}
\newcommand\sus{\operatorname{sus}}
\newcommand\Hom{\operatorname{Hom}}
\newcommand\supp{\operatorname{supp}}
\newcommand\cl{\operatorname{cl}}
\newcommand\SU{\operatorname{SU}}
\newcommand\Ind{\operatorname{Ind}}
\newcommand\Ch{\operatorname{Ch}}
\newcommand\pt{\operatorname{pt}}
\newcommand\Cl{\mathbb{C}\ell}
\renewcommand\vert{\operatorname{vert}}
\newcommand\inte[1]{ {#1}\setminus\pa {#1}}
\newcommand\cL{\mathcal{L}}
\newcommand\cE{\mathcal{E}}
\newcommand\cA{\mathcal{A}}
\newcommand\cB{\mathcal{B}}
\newcommand\cP{\mathcal{P}}
\newcommand\cC{\mathcal{C}}
\newcommand\cK{\mathcal{K}}
\newcommand\ALF{\operatorname{ALF}}
\newcommand\sign{\operatorname{sign}}
\newcommand\SO{\operatorname{SO}}
\begin{document}
\title[Foliated cusp operators]
{Pseudodifferential operators on manifolds with foliated boundaries}

\author{Fr\'ed\'eric Rochon}
\address{Department of mathematics, Australian National University}
\email{Frederic.Rochon@anu.edu.au}
\dedicatory{\datverp}
\thanks{The author was partially supported by a NSERC discovery grant}

\begin{abstract}   
Let $X$ be a smooth compact manifold with boundary.  For smooth foliations on the boundary of $X$ admitting a `resolution' in terms of a fibration, we construct a pseudodifferential calculus generalizing the fibred cusp calculus of Mazzeo and Melrose.  In particular, we introduce certain symbols leading to a simple description of the Fredholm operators inside the calculus.  When the leaves of the fibration `resolving' the foliation are compact, we also obtain an index formula for Fredholm perturbations of Dirac-type operators.  Along the way, we obtain a formula for the adiabatic limit of the eta invariant for invertible perturbations of Dirac-type operators, a result of independent interest generalizing the well-known formula of Bismut  and Cheeger.
\end{abstract}
\maketitle

\tableofcontents

\section*{Introduction}

Let $X$ be a smooth compact manifold with non-empty boundary $\pa X$.  Assume that $\pa X$ is the total space of a smooth fibration
\begin{equation*}
\xymatrix{
                 Z  \ar@{-}[r]  & \pa X \ar[d]^{\Phi} \\
                                              & Y
                                              }
\end{equation*}
where the base $Y$ and the typical fibre $Z$ are smooth closed manifolds.  Let also $x\in \CI(X)$ be a choice of boundary defining function.  On the interior of $X$, consider a complete Riemannian metric $g_{\Phi}$ which in a collar neighborhood $c: \pa X\times [0,\epsilon)_{x}\to X$    of $\pa X$ takes the form
\[
   g_{\Phi}= \frac{dx^{2}}{x^{4}} + \frac{\Phi^{*}h}{x^{2}} + \kappa
\]
where $\kappa$ is a symmetric 2-tensor restricting to a Riemannian metric on each fibre of $\Phi$, and $h$ is a Riemannian metric on $Y$.  When $Y=\pa X$ and $\Phi$ is the identity map, this means $g_{\Phi}$ is a conical metric near $\pa X$.  The Euclidean metric on $\bbR^{n}$ is the prototypical example of a metric of this form.   When instead $Y$ is a point, $(X\setminus \pa X,g_{\Phi})$ is a manifold with a cylindrical end.  In general, when $0<\dim Y < \dim \pa X$, the metric $g_{\Phi}$ behaves like a cone in the base of the fibration and like a cylindrical end in the fibres.  Important examples of metrics of this form are given by gravitational instantons in the ALF and ALG classes such as the multi-Taub-NUT metric and the reduced 2-monopole moduli space metric. 

To study geometric operators arising in this context, like the Laplacian and the Dirac operator, Mazzeo and Melrose introduced in \cite{Mazzeo-Melrose} the fibred cusp calculus (also called $\Phi$-calculus), a pseudodifferential calculus naturally adapted to this sort of metrics.  When $Y=\pa X$ and $\Phi$ is the identity map, this reduces to the scattering calculus of \cite{MelroseGST}, while when $Y$ is a point, this is the cusp calculus, a calculus intimately related to the $b$-calculus of \cite{MelroseAPS}.  

As for other related calculi of pseudodifferential operators, like the $b$-calculus, the $0$-calculus \cite{Mazzeo-Melrose0}, the Edge Calculus \cite{MazzeoEdge} or the $\Theta$-calculus \cite{EMM}, the starting point of the construction of \cite{Mazzeo-Melrose} is a certain  space of vector fields, namely
\[
  \cV_{\Phi}(X)= \{\xi\in \Gamma(TX) \quad | \quad \exists \, C>0 \
  \mbox{such that} \; g_{\Phi}(\xi(p),\xi(p))< C\; \forall \ p\in 
 X\setminus \pa X \},
\]  
the space of fibred cusp vector fields.
If $(x,y,z)$ are coordinates in a small neighborhood near $\pa X$ where the fibration $\Phi$ is trivial with $y$ and $z$ being local coordinates on the base $Y$ and the fibre $Z$ respectively, then in that neighborhood, a fibred cusp vector field $\xi\in \cV_{\Phi}(X)$ is necessarily of the form
\[
   \xi= a x^{2}\frac{\pa}{\pa x} + \sum_{i} b^{i} x\frac{\pa}{\pa y^{i}} + \sum_{j} c^{j} \frac{\pa}{\pa z^{j}}
\]
where $a,b^{i},c^{j}\in \CI(X)$ are smooth functions up to the boundary.  In fact, the space 
$\cV_{\Phi}(X)$ can be defined alternatively by
\[
 \cV_{\Phi}(X)= \{ \xi \in \Gamma(TX) \quad | \quad 
  \xi x\in x^{2}\CI(X) \; \mbox{and} \; 
\Phi_{*}(\left.\xi\right|_{\pa X})=0 \}.
\]
The space $\cV_{\Phi}(X)$ is easily seen to be a Lie subalgebra of $\Gamma(TX)$.  This means we can define unambiguously the space 
$\Diff^{k}_{\Phi}(X)$ of $\Phi$-differential operators of order $k$ as those operators on $\CI(X)$ which can be written as a finite sum of products of smooth functions  with at most $k$ elements of $\cV_{\Phi}(X)$.  To define more generally the space $\Psi^{m}_{\Phi}(X)$ of $\Phi$-pseudodifferential operators of order $m$, Mazzeo and Melrose describe their Schwartz kernels as conormal distributions on the manifold $X\times X$ suitably blown up at the corner $\pa X\times \pa X$.  The $\Phi$-pseudodifferential operators map smooth functions to smooth functions and they map the subspace $\dot{\cC}^{\infty}(X)\subset \CI(X)$ of functions vanishing to infinite order at $\pa X$ to itself.  They are closed under composition and the $\Phi$-operators of order $0$ induce bounded linear operators on the space $L^{2}_{\Phi}(X\setminus \pa X)$ of square integrable functions with respect to the density defined by $g_{\Phi}$.  More generally, there are natural Sobolev spaces on which $\Phi$-pseudodifferential operators act.  

The notions of symbol and ellipticity have a natural generalization in this context.  However, ellipticity is not enough to insure an operator is Fredholm.  Another `symbol', called the normal operator, which encodes the asymptotic behavior of the operator at infinity, must also be invertible.  In that case, one says the operator is fully elliptic.  The criterion of Mazzeo and Melrose is that a $\Phi$-operator is Fredholm (when acting on suitable Sobolev spaces)  if and only if it is fully elliptic.  One of the nice features of the $\Phi$-calculus is that, in contrast with other types of pseudodifferential calculi, the inverse of an invertible fully elliptic $\Phi$-operator is automatically in the calculus.  

In \cite{Mazzeo-Melrose}, Mazzeo and Melrose raised the problem of finding a nice formula for the index of fully elliptic $\Phi$-operators.  When $Y$ is a point, a formula for the index of fully elliptic $\Phi$-operators is essentially given by the Atiyah-Patodi-Singer index theorem \cite{APS1}.
When $Y=\pa X$, then the problem can be reduced to the Atiyah-Singer index theorem \cite{Atiyah-Singer0} as explained in \cite{MelroseGST}.  In the intermediate cases, obtaining a satisfactory formula is more delicate.  When $X\setminus \pa X= \bbS^{1}\times \bbR^{3}$, a formula was obtained by \cite{Nye-Singer} for some operators coming from gauge theory.  In \cite{Lauter-Moroianu}, Lauter and Moroianu computed the Hochschild homology of the algebra of $\Phi$-operators and obtained an index formula in this framework.  In \cite{HHM}, the $L^{2}$-cohomology of $\Phi$-metrics is computed in terms of intersection homology.  An index in $K$-theory for families was obtained in \cite{Melrose-Rochon06}.  In \cite{Moroianu}, Moroianu used the index theorem of Vaillant \cite{Vaillant} to obtain one for Dirac-type $\Phi$-operators.  This formula was also obtained in \cite{LMP} using the adiabatic calculus of \cite{Melrose-Rochon06}.  This was subsequently generalized in \cite{Albin-Rochon1} to include Fredholm perturbations of Dirac-type $\Phi$-operators and to deal with families.

In the present paper, we generalize the $\Phi$-calculus to situations where the fibration $\Phi$ on the boundary is replaced by a smooth foliation $\cF$.  Since a foliation locally looks like a fibration, the notions of $\Phi$-vector fields and $\Phi$-differential operators have obvious generalizations.  However, the passage from a fibration to a foliation is much more delicate for pseudodifferential operators.  This is because pseudodifferential operators are not local, so the global aspects of the foliation have to be taken into account in their definition.  One way to proceed is to use the general construction of \cite{ALN07} for manifolds with Lie structure at infinity.  The calculi obtained in this way have many of the usual properties, but they are typically smaller than the calculi constructed \`a la Melrose.   This makes certain constructions, like the one of a parametrix, more delicate.  To be able to use some of the known results about $\Phi$-operators, notably about the index of fully elliptic ones, we will proceed differently.  

More precisely, we will assume the foliation $\cF$ can be `resolved' into a fibration with possibly non-compact fibres but with compact base (see Assumption~\ref{tfb.2b} at the beginning of \S~\ref{rf.0}).   This certainly impose a restriction.  Still, as we indicate in \S~\ref{rf.0}, a wide variety of natural examples arise in this way.  For this type of foliations, one can then define $\cF$-pseudodifferential operators using $\Phi$-pseudodifferential operators on the fibration `resolving' the foliation.  Standard mapping properties and the fact the $\cF$-calculus is closed under composition follow without too much effort.  The notion of symbol and ellipticity also has an obvious generalization.  The introduction of a normal operator requires more work, but eventually leads to simple criteria describing the $\cF$-operators which are compact or Fredholm.  One important new feature is that, contrary to what happens for $\Phi$-operators, the inverse of an invertible fully elliptic $\cF$-operator is not necessarily in the calculus.  In fact, the construction of a parametrix in the spirit of \cite{Mazzeo-Melrose} only works in certain special cases (see Theorem~\ref{pc.14} and Corollary~\ref{pcf.1}).

When the fibres of the fibration `resolving' the foliation are compact (see Assumption~\ref{fini.1} at the beginning of \S~\ref{it.0}), we are able to obtain an index formula for (Fredholm perturbations of) Dirac-type $\cF$-operators  (see Theorem~\ref{ad.10}).  The formula is quite similar to the one  of \cite{Albin-Rochon1} for Dirac-type $\Phi$-operators, except that it has a new term, a $\rho$-invariant encoding how the normal operator is lifted to the fibration `resolving' the foliation.  To obtain this formula, our strategy, inspired from \cite{LMP}, is to start with a cusp metric, that is, a $\Phi$-metric with $Y=\pt$, and to compute the limit of the known formula when the metric is deformed to an $\cF$-metric.  The main step is to compute the adiabatic limit of some eta invariant, which we can do using the index formula of \cite{Albin-Rochon1}.  This gives in this way a generalization of the adiabatic limit of Bismut and Cheeger to invertible perturbations of Dirac-type operators (see Theorem~\ref{ad.13}), a result of independent interest.   This should be compared with the recent work \cite{Goette2011} of Goette, who independently obtained an adiabatic limit for the eta invariant using different methods.    

The paper is organized as follows.  In \S~\ref{mfb.0}, we introduce the relevant geometric structures arising on a manifold with foliated boundaries.  It is followed in \S~\ref{mfdo.0} by a brief description of the construction of pseudodifferential operators using groupoids.  In section~\ref{rf.0}, we focus our attention on foliations that can be `resolved' by a fibration and we define the algebra of $\cF$-pseudodifferential operators.  The notions of symbol and of normal operator are introduced in \S~\ref{symb.0}.  We then define the natural Sobolev spaces on which $\cF$-operators act and provide a compactness criterion.  Before obtaining a Fredholm criterion in \S~\ref{fc.0}, we need in \S~\ref{sss.0} to develop an adequate notion of Sobolev spaces for $\cF$-suspended operators.  In \S~\ref{it.0}, we state and prove our index theorem for Fredholm perturbations of Dirac-type $\cF$-operators.  Finally, in \S~\ref{qmtn.0}, we use our results to compute the index of Dirac operators in some natural examples.

\begin{acknowledgements}
The author is grateful to Paolo Piazza for helpful discussions and to an anonymous referee for useful comments.
\end{acknowledgements}

\section{Manifolds with foliated boundaries} \label{mfb.0}

Let $X$ be a smooth manifold of dimension $n$ with non-empty boundary
$\pa X$.  Suppose that $\cF$ is a smooth foliation on the boundary 
$\pa X$ and denote by $\ell$ the dimension of the leaves.  To ease the presentation, we are assuming the boundary $\pa X$ is connected, but the reader should keep in mind that the results presented in this paper admit straightforward generalizations to situations where $\pa X$ has more than one connected component (with the dimension of the leaves of the foliation possibly varying from one connected component to another).     Fix also once
and for all a boundary defining function $x\in \CI(X)$, that is, $x$ is 
a function such that $x>0$ in the interior of $X$, $x=0$ on $\pa X$ and 
the differential of $x$ is nowhere zero on $\pa X$.  

With this data, we can define the space of \textbf{foliated cusp vector fields} (or \textbf{$\cF$-vector fields}) by
\begin{equation}
 \cV_{\cF}(X)= \{ \xi \in \Gamma(TX) \quad | \quad 
  \xi x\in x^{2}\CI(X) \; \mbox{and} \; 
\left.\xi\right|_{\pa X}\in \Gamma(T\cF) \}
\label{mfb.1}\end{equation}
where $T\cF$ is the distribution associated to the foliation $\cF$.  The 
condition $\left.\xi\right|_{\pa X}\in \Gamma(T\cF)$ simply means that 
$\xi$ is required to be tangent to the leaves of the foliation.  As a 
particular case, we recover the space of fibred cusp vector fields 
introduced in \cite{Mazzeo-Melrose} when the 
leaves of $\cF$ are given by the fibres of a smooth fibration.  

As can be seen directly from the definition, the space $\cV_{\cF}(X)$ 
is a Lie subalgebra of the Lie algebra of smooth vector fields $\Gamma(TX)$.  
Thanks to this property, we can define the space $\Diff^{m}_{\cF}(X)$ of
\textbf{foliated cusp differential operators of order $m$} or \textbf{$\cF$-differential operators of order $m$} as the space of operators on
$\CI(X)$ generated by $\CI(X)$ and products of up to $m$ elements of 
$\cV_{\cF}(X)$. 

By a theorem of Serre and Swan, the Lie algebra $\cV_{\cF}(X)$ can be 
identified with the space of sections
of a certain vector bundle, namely the \textbf{ $\cF$-tangent bundle}
${}^{\cF}TX$.  This vector bundle is defined as follows.  For 
$p\in X$, let $\cI_{p}(X)\subset \CI(X)$ be the ideal of functions vanishing
at $p$ and set
\begin{equation}
    {}^{\cF}T_{p}X = \cV_{\cF}(X)/ (\cI_{p}(X)\cV_{\cF}(X)).
\label{mfb.2}\end{equation}
The $\cF$-tangent vector bundle ${}^{\cF}TX$ is then the vector bundle whose
fibre above $p\in X$ is given by \eqref{mfb.2}.  The theorem of 
Serre and Swan insures that it  has a natural smooth
structure and is such that there is a canonical identification
$\cV_{\cF}(X)= \Gamma({}^{\cF}TX)$.  The restriction of ${}^{\cF}TX$ 
to the interior $\inte{X}$ of $X$ is naturally isomorphic to 
$\left. TX\right|_{\inte{X}}$.  This isomorphism extends to a natural
map of vector bundles
\begin{equation}
  \rho : {}^{\cF}TX\to TX.
\label{mfb.3}\end{equation}
On the boundary however, this map fails to be an isomorphism.  In fact, the
kernel of its restriction to the boundary $\left.\rho\right|_{\pa X}:
\left.{}^{\cF}TX\right|_{\pa X} \to \left. TX\right|_{\pa X}$ forms a smooth
vector bundle ${}^{\cF}N\pa X \to \pa X$ of rank $n-\ell$.  Notice however that
despite the fact the map $\rho$ is not an isomorphism of vector bundles, the 
bundles ${}^{\cF}TX$ and  $TX$ are nevertheless isomorphic, although not
in a natural way.  The map $\rho$ also induces a map of sections 
\begin{equation}
  \rho_{\Gamma}: \Gamma({}^{\cF}TX)\to \Gamma(TX)
\label{mfb.4}\end{equation}
which is just the natural inclusion $\cV_{\cF}(X)\subset \Gamma(TX)$ 
under the identification of $\cV_{\cF}(X)$ with $\Gamma({}^{\cF}TX)$.
This discussion can be conveniently summarized by the notion of Lie algebroid
which we now recall.  

\begin{definition}
A \textbf{Lie algebroid} $E$ over a manifold $M$ (possibly with corners) is a vector 
bundle $E$ over $M$ together with a Lie algebra structure on its space of
smooth sections $\Gamma(E)$ and a bundle map $a:E\to TM$ extending to
a map of sections $a_{\Gamma}: \Gamma(E)\to \Gamma(TM)$ such that 
\begin{itemize}
\item[(i)] $a_{\Gamma}([X,Y])= [ a_{\Gamma}(X),a_{\Gamma}(Y)]$ for
 all $X,Y \in \Gamma(E)$,
\item[(ii)] $[X,fY]= f[X,Y] + (a_{\Gamma}(X)f) Y$ for all
$X,Y\in \Gamma(E)$ and $f\in \CI(M)$.
\end{itemize}
The map $a_{\Gamma}$ is called the \textbf{anchor map} of $E$.  
\label{mfb.5}\end{definition}
Clearly, the $\cF$-tangent bundle ${}^{\cF}TX$ is a Lie algebroid with 
anchor map given by \eqref{mfb.4}.  Since all the vector fields of 
$\cV_{\cF}(X)$ are tangential to the boundary of $X$, the Lie
algebroid ${}^{\cF}TX$ is also said to be boundary tangential in the
terminology of \cite{ALN04}.  

If $g_{\cF}$ is a choice of smooth metric for the vector bundle ${}^{\cF}TX$,
then its restriction to $\left. {}^{\cF}TX\right|_{\inte{X}}=\left.TX\right|_{\inte{X}}$ induces a Riemannian metric on $\inte{X}$ also denoted $g_{\cF}$.
We will say the metric $g_{\cF}$ is a \textbf{$\cF$-metric} or \textbf{foliated boundary metric}.  The metric $g_{\cF}$ gives a particular example of a 
Riemannian manifold with Lie structure at infinity, a notion extensively
studied in \cite{ALN04}.  In our case, the Lie structure at infinity is
specified by the foliation $\cF$ and the boundary defining function $x$.  Among other things, we know from
the general results of \cite{ALN04} that $(\inte{X},g_{\cF})$ is complete.
Notice that the Lie algebra of foliated cusp vector fields can also be 
defined in terms of the metric $g_{\cF}$,
\begin{equation}
\cV_{\cF}(X)=\{ \xi \in \Gamma(TX) \quad | \quad \exists \, C>0 \
  \mbox{such that} \; g_{\cF}(\xi(p),\xi(p))< C\; \forall \ p\in 
\inte{X}\}.
\label{mfb.6}\end{equation}  

The Laplacian $\Delta_{g_{F}}$ associated to an $\cF$-metric $g_{\cF}$ is a 
natural example of foliated cusp operator.  More generally, any
reasonable differential operator geometrically constructed from the 
metric $g_{\cF}$, for instance the Dirac operator when $(\inte{X},g_{\cF})$ has
a spin structure, will give an example of foliated cusp operator.  

The motivation behind the terminology \emph{foliated cusp} comes from another
type of metrics, namely foliated cusp metrics.  A \textbf{foliated cusp metric}
$g_{\cF-c}$ is a metric of the form
\begin{equation}
   g_{\cF-c}= x^{2}g_{\cF}
\label{mfb.7}\end{equation} 
for some $\cF$-metric $g_{\cF}$.  In particular, a foliated cusp metric is
always conformal to an $\cF$-metric.  In a local chart 
$\cU= [0,\epsilon)\times F\times B$, near $\pa X$ on which the foliation looks
like a fibration with its leaves given by $\{0\}\times F\times \{b\}$ for
$b\in B$, a simple example of such metric is given by 
\begin{equation}
      \frac{dx^{2}}{x^{2}}+ g_{B}+ x^{2}g_{F}
\label{mfb.8}\end{equation} 
where $g_{B}$ and $g_{F}$ are (pull-backs of) metrics on $B$ and $F$ 
respectively.  When we restrict this metric to a leaf 
$(0,\epsilon)\times F\times \{b\}$ for some $b\in B$, we get a cusp metric
$\frac{dx^{2}}{x^2}+ x^{2}g_{F}$, suggesting the metric \eqref{mfb.8}
as a whole is a foliated cusp.  The corresponding $\cF$-metric is of the form
\begin{equation}
      \frac{dx^{2}}{x^{4}}+ \frac{g_{B}}{x^{2}}+ g_{F}
\label{mfb.9}\end{equation}
in this local chart. 
A particular feature of foliated cusp metrics is that they give non-compact
complete Riemannian manifolds of finite volume.  As for an $\cF$-metric, one
can use a foliated cusp metric $g_{\cF-c}$ to define a space of vector 
fields
\begin{equation}
\cV_{\cF-c}(X)=\{ \xi \in \Gamma(TX) \quad | \quad \exists \, C>0 \
  \mbox{such that} \; g_{\cF-c}(\xi(p),\xi(p))< C\; \forall \ p\in 
\inte{X}\}.
\label{mfb.10}\end{equation}  
As can be check directly however, this space is not a Lie subalgebra of the 
Lie algebra $\Gamma(TX)$, which is not so convenient to define a corresponding
algebra of differential operators.  Instead, one can notice that 
$\cV_{g_{\cF-c}}(X)= x^{-1}\cV_{\cF}(X)$, which suggests the following natural
definition for differential operators of order $m$ associated to \eqref{mfb.10},
\begin{equation}
    \Diff_{g_{\cF-c}}^{m}(X)= x^{-m}\Diff^{m}_{\cF}(X).
\label{mfb.11}\end{equation}
This definition is consistent with the fact the Laplacian of a foliated cusp
metric is an element of $x^{-2}\Diff^{2}_{\cF}(X)$.  

\section{Microlocalization using groupoids}\label{mfdo.0}

As was shown in many circumstances, even if one is solely interested in 
differential operators, it is often useful to have a corresponding 
calculus of pseudodifferential operators to study them, for instance to determine
if a given differential operator is Fredholm.  For the algebra
$\Diff^{*}_{\cF}(X)$, such a microlocalization could also be useful to 
study operators arising from a foliated cusp metric as in \eqref{mfb.11}.  For instance, when
the foliation $\cF$ comes from a fibration, this point of view was used very successfully by Vaillant in his thesis \cite{Vaillant} to study certain Dirac-type
operators associated to fibred cusp metrics.  
As shown in \cite{ALN07} and \cite{NWX}, one very general way of 
microlocalizing an algebra of differential operators
such as $\Diff^{*}_{\cF}(X)$ comes from groupoid theory.  To describe this
construction, we will first recall briefly the definition of a Lie groupoid.

A \textbf{groupoid} $\cG$ is a category whose morphisms are invertible and form
a set.  Let us denote by $\cG^{(1)}$ its set of morphisms and by $M$ its set
of objects.  An element $g\in \cG^{(1)}$ has a domain $d(g)\in M$ and 
a range $r(g)\in M$.  This defines maps
\begin{equation}
  d: \cG^{(1)}\to M, \quad r: \cG^{(1)}\to M.
\label{mfdo.1}\end{equation}  
Since two morphisms compose when the range of one is the domain of the other,
there is  a composition map 
\begin{equation}
  \mu: \{(g,h)\in \cG^{(1)}\times \cG^{(1)}\; | \; r(h)=d(g)\}\to M,
\quad \mu(g,h)= g\circ h.
\label{mfdo.1b}\end{equation}
Since every morphism is invertible, there is also an inverse map
\begin{equation}
\begin{array}{lccc}
  \inv:& \cG^{(1)} & \to & \cG^{(1)}  \\
       & g & \mapsto & g^{-1}. 
\end{array}
\label{mfdo.2}\end{equation}
In particular, this tacitly assume that for each $m$, there is a unique unit 
element $e_{m}$ such that $g\circ g^{-1}=e_{m}$ whenever $r(g)=m$.
We denote the space of units by $\cG^{(0)}$ and remark that it is 
canonically identified with the space of objects $M$.  
\begin{definition}
A \textbf{Lie groupoid} is a groupoid $\cG$ such that $\cG^{(1)}$ and 
$\cG^{(0)}$ are smooth manifolds with corners and such that the structural
maps $d,r,\inv$ and $\mu$ are smooth with $d$ and $r$ submersions (in the 
sense of Definition 1 in \cite{NWX}).  With this definition, the fibres
of $d$ and $r$ are smooth manifolds without boundary or corner.
\label{mfdo.3}\end{definition}

The \textbf{$d$-vertical tangent space} of a Lie groupoid $\cG$, denoted
$T_{\vert}\cG$, is the vector bundle defined by the kernel of the differential
$d_{*}:T\cG^{(1)}\to T\cG^{(0)}$.  The \textbf{Lie algebroid of $\cG$},
denoted $A(\cG)$, is defined to be the restriction of $T_{\vert}\cG$ to
$\cG^{(0)}$.  Its space of sections is naturally identified with the 
space of sections of $T_{\vert}\cG$ which are right invariant with respect
to the action of $\cG$ and its Lie algebra structure is identified with
the corresponding one for right invariant sections of $T_{\vert}\cG$.  The
anchor map of $A(\cG)$ is induced by the differential of the range map
$r$:
\begin{equation}
  r_{*}: A(\cG)\to T\cG^{(0)}= TM.
\label{mfdo.4}\end{equation}

In \cite{NWX}, a calculus of pseudodifferential operators was defined on such
Lie groupoids providing a unifying point of view for the description of 
pseudodifferential operators arising in various contexts.  In particular, as
described in \cite{ALN07}, this gives a systematic way of constructing an
algebra of pseudodifferential operators for Riemannian manifolds with Lie 
structure at infinity.  One of the key ingredients for this approach is an 
integrability criterion of Debord \cite{Debord} (see also Corollary 5.9
in \cite{CrainicFernandes}).
\begin{theorem}[Debord] A Lie algebroid with injective anchor map on a 
dense open set is integrable, that is, it is the Lie algebroid of a Lie
groupoid.
\label{mfdo.5}\end{theorem}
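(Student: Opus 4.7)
The plan is to construct a Lie groupoid integrating $E$ via the Weinstein path-space construction, exploiting the almost-injectivity of the anchor to rule out the monodromy obstructions that can otherwise obstruct integrability.

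First I would study the dense open set $U \subset M$ on which the anchor $a: E \to TM$ is injective. There, $a(E|_U) \subset TU$ is a subbundle of rank equal to $\mathrm{rank}(E)$, and by the Lie algebroid compatibility axioms it is involutive. Frobenius then gives a regular foliation $\cF_U$ of $U$ whose holonomy groupoid $\mathrm{Hol}(\cF_U)$ is a smooth Hausdorff Lie groupoid integrating $E|_U$. This is the model we must extend across the singular locus $M \setminus U$.

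Second I would form the candidate global groupoid using $A$-paths. Let $P(E)$ denote the space of paths $\alpha: [0,1] \to E$ whose base path $\gamma = \pi \circ \alpha$ satisfies $a(\alpha(t)) = \dot\gamma(t)$, and let $\sim$ denote $A$-homotopy in the sense of Crainic--Fernandes (a smooth family of $A$-paths with fixed endpoints, together with the standard horizontal infinitesimal condition). Set $\cG := P(E)/\sim$, with source and range given by the endpoints of $\gamma$, multiplication given by concatenation, and inversion by time reversal. Algebraically this is a groupoid over $M$ whose formal Lie algebroid is $E$; the question is whether it carries a smooth manifold structure.

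The main obstacle — and really the whole content of the theorem — is showing that $\cG$ is a smooth (Hausdorff) manifold, equivalently that the monodromy groups of $E$ are uniformly discrete. Here I would use density of $U$ decisively: any $A$-path $\alpha$ whose endpoints lie in $U$ can be perturbed, rel endpoints, to an $A$-path taking values over $U$, because the non-injective locus has empty interior and the base path can be slid off it. On $U$ the groupoid $\mathrm{Hol}(\cF_U)$ already integrates $E|_U$, so the $A$-homotopy classes of such paths coincide with holonomy classes for the regular foliation $\cF_U$, and their monodromy groups are trivial. By continuity of the monodromy map (Crainic--Fernandes), this forces all monodromy groups over points of $M$ to be discrete, which is precisely the Crainic--Fernandes integrability criterion. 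One then checks that $d,r$ are submersions and that the canonical map $A(\cG) \to E$ is an isomorphism, completing the integration.
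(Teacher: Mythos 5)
The paper does not prove this statement: it is quoted as a theorem of Debord, with a pointer to Corollary~5.9 of Crainic--Fernandes, so there is no internal proof to compare against. Your choice of route (Weinstein path-space groupoid plus the Crainic--Fernandes monodromy criterion) is a legitimate known way to obtain the result, but as written the argument has a genuine gap at its central step.

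The problem is the claim that an $A$-path with endpoints in $U$ can be ``slid off'' the non-injective locus, and that this, together with ``continuity of the monodromy map'', forces discreteness of all monodromy groups. First, the base path of an $A$-path is confined to a single orbit of the algebroid; it cannot be perturbed off the singular locus unless its orbit already meets $U$. In fact, since the orbit dimension is constant along an orbit, any orbit meeting $U$ lies entirely in $U$, so your perturbation statement is either vacuous (endpoints in $U$) or false (endpoints in $M\setminus U$). The points that actually carry the obstruction are exactly those $x\in M\setminus U$ whose orbits are contained in the singular locus and where $\mathfrak{g}_x=\ker a_x\neq 0$; your argument says nothing about them. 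Second, there is no continuity of monodromy to invoke: the Crainic--Fernandes criterion requires both discreteness of $\mathcal{N}_x$ at each point \emph{and} the uniform bound $\liminf_{y\to x} d\bigl(0,\mathcal{N}_y\setminus\{0\}\bigr)>0$, precisely because monodromy groups can jump discontinuously at special points (this is what happens in the standard non-integrable examples). Knowing $\mathcal{N}_y=0$ on a dense open set gives no control at the singular points. The actual content of Debord's theorem is to show that almost-injectivity of the anchor kills the monodromy at \emph{every} point --- roughly, an $A$-homotopy is determined by its anchor image on the dense set where $a$ is injective, so a variation concentrated in the isotropy at a singular point is forced to vanish --- and that step is entirely missing from your sketch. (A minor further point: even when smooth, the Weinstein groupoid need not be Hausdorff, so you should not expect to produce a Hausdorff integration.)
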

This criterion certainly applies to the Lie algebroid of a Riemannian manifold
with Lie structure at infinity.  In particular, it applies to the Lie
algebroid ${}^{\cF}TX$.  Thus there is a Lie groupoid $\cG$ with Lie
algebroid canonically identified with ${}^{\cF}TX$.  From \cite{NWX}, we
get a corresponding algebra $\Psi^{\infty}(\cG)$ of pseudodifferential 
operators which acts on $\CI_{c}(\inte{X})$ via the \emph{vector representation} $\pi_{X}: \Psi^{\infty}(\cG) \to \End(\CI_{c}(\inte{X}))$.  This 
representation gives a corresponding algebra of pseudodifferential operators
on $\inte{X}$.  As described in \cite{ALN07}, if we can find an $\cF$-metric
$g_{\cF}$ with positive injective radius, then this algebra can also be 
described directly without referring to the groupoid.  The 
advantage of using groupoids, besides making the construction systematic, is 
that the composition property follows for free.    

However, as can be seen from previous works about 
pseudodifferential operators associated to  various types of Lie structure at infinity (see for instance \cite{Mazzeo-Melrose0}, \cite{EMM}, \cite{MelroseAPS},
\cite{MelroseGST}, 
\cite{Mazzeo-Melrose}, \cite{MazzeoEdge},\cite{Vaillant}, \cite{Lauter-Moroianu2}, \cite{Krainer07} and \cite{Grieser-Hunsicker})
the algebra of pseudodifferential constructed in \cite{NWX} and \cite{ALN07}
is usually not big enough to allow certain natural construction, for instance 
the construction of a parametrix.  Of
course, in all the papers just mentioned, there is always a groupoid hidden 
behind
the definition of the corresponding calculus of pseudodifferential operators.
The groupoid is hidden because what is usually used in not the groupoid, but a
natural compactification of it, typically a manifold with corners on which 
the pseudodifferential operators are defined in terms of conormal 
distributions.  Notice that the Lie groupoids used in \cite{NWX} and 
\cite{ALN07} are also manifolds with corners, but they are usually not compact.

\section{Microlocalization for certain types of foliated boundaries}\label{rf.0}
     
Since we are interested in analytical constructions like the construction
of a parametrix, we will construct our algebra of pseudodifferential operators
more in the spirit of \cite{Mazzeo-Melrose}.  In order to be able to 
do so, we will narrow down the type of foliations we will consider on 
the boundary $\pa X$.

To define our algebra of pseudodifferential operators on $X$, we will
make the following assumption on the foliation $\cF$.

\begin{assumption}
The foliation $\cF$ on $\pa X$ arises as follows:
\begin{itemize}
\item[(i)] $\pa X= (\pa\widetilde{X})/\Gamma$ where $\pa\widetilde{X}$ is a smooth manifold (not necessarily compact) on which a countable discrete group $\Gamma$ acts on the right by diffeomorphisms freely and
properly discontinuously.  The manifold $\pa\widetilde{X}$ is the total space of a 
fibration 
\begin{equation}
\xymatrix{
                 \widetilde{Z} \ar[r] & \pa\widetilde{X} \ar[d]^{\Phi} \\
                                              & Y
                                              }
\label{rf.1}\end{equation}
where the base $Y$ is a closed manifold and the typical fibre $\widetilde{Z}$ is 
a smooth manifold.  The group $\Gamma$ acts smoothly and locally freely\footnote{Recall that $\Gamma$ acts locally freely on $Y$ if given $\gamma\in \Gamma$ and an open set $\cU\subset Y$ such that $\gamma(y)=y$ for any 
$y\in \cU$, then $\gamma=1$.} on
$Y$ in such a way that 
\[
   \Phi(m\cdot\gamma) =  \Phi(m)\cdot\gamma,  \quad \forall \ m\in \pa\widetilde{X}, \ \gamma\in \Gamma.  
\]
\item[(ii)] The leaves of the foliation $\cF$ are given by the images of the fibres of the fibration $\Phi: \pa\widetilde{X}\to Y$ under
the quotient map $q: \pa\widetilde{X}\to\pa X$.  Thus the leaves of the foliation
are given by $q( \Phi^{-1}(y))$ for $y\in Y$.
\end{itemize} 
\label{tfb.2b}\end{assumption}
\begin{remark}
Since $\Gamma$ acts smoothly and locally freely on $Y$, notice that the subset of $Y$ where the action is free is a countable intersection of dense open sets, so in particular it is dense in $Y$ by the Baire category theorem. 
\label{lf.1}\end{remark}
\begin{remark}
The assumption that the action of $\Gamma$ on $Y$ is locally free is not necessary in many parts of the paper, notably in \S~\ref{it.0}.  This assumption is however useful to give a simple description of the holonomy groupoid of the foliation and is very helpful in defining the normal operator (see Lemma~\ref{faith.1} and Definition~\ref{symb.19}).\label{lf.1b}\end{remark}

\begin{example}
Here is a simple way to construct such a foliation.  
Let $\widetilde{W}$ denote the universal cover of  a smooth closed manifold $W$.  Then $\Gamma=\pi_{1}(W)$ acts freely and properly discontinuously on $\widetilde{W}$.  Suppose also that $\Gamma$ acts smoothly and locally freely on another closed manifold $V$.  Then the diagonal action of $\Gamma$ on the total space of the trivial fibration $\pr_{R}: \widetilde{W}\times V\to V$ satisfies all the requirements in Assumption~\ref{tfb.2b}, so that there is an induced foliation on the quotient space $(\widetilde{W}\times V)/ \Gamma$. 
\label{uc.1}\end{example}

An important special case is given by the following.  

\begin{example}[Kronecker foliation]  For $\theta$ an irrational number, the Kronecker foliation on the torus $\bbT^{2}= \bbR^{2}/\bbZ^{2}$ is the foliation whose leaves are obtained by projecting the straight lines of slope $\theta$ in $\bbR^{2}$ onto the torus.  Each leaf is then diffeomorphic to the real line and  is dense in the torus.  The Kronecker foliation  is an example of foliation satisfying Assumption~\ref{tfb.2b}.  Indeed, consider the fibration $\Phi: \bbR\times (\bbR/\bbZ)\to \bbR/\bbZ$ given by the projection on the right factor.  Let $\Gamma= \bbZ$ act on the total space by
\begin{equation}
 (x,[y])\cdot k= (x+k, [y-\theta k]), \quad (x,[y])\in \bbR\times (\bbR/\bbZ), \; k\in\bbZ,
\label{kr.2}\end{equation}
where the brackets denote equivalence classes modulo the action of $\bbZ$.
It induces an action of $\bbZ$ on $\bbR/\bbZ$, 
\begin{equation}
 [y]\cdot k= [y-\theta k], \quad [y]\in\bbR/\bbZ, \; k\in\bbZ.
\label{kr.3}\end{equation}
This fibration and group actions satisfy part (i) of Assumption~\ref{tfb.2b}, so that there is an induced foliation on the quotient
\[
        (\bbR\times (\bbR/\bbZ))/\bbZ.
\]
One can then easily check that the diffeomorphism
\begin{equation}
\begin{array}{lccc}
  \psi: & (\bbR\times (\bbR/\bbZ))/\bbZ & \to & \bbT^{2}= \bbR/\bbZ\times \bbR/\bbZ \\
          &   [x,[y]] & \mapsto & ([x], [\theta x +y])
\end{array}
\label{kr.4}\end{equation}
identifies the foliation on $(\bbR\times (\bbR/\bbZ))/\bbZ$ with the Kronecker foliation on $\bbT^{2}$.
\label{kr.1}\end{example}

\begin{example} [Seifert fibrations]
Let $\cF$ be a foliation of a closed smooth 3-manifold $M$ by circles.  By a result of Epstein \cite{Epstein1972}, this foliation is then diffeomorphic to a Seifert fibration.  Thus, its leaves are the fibres of a circle bundle $\hat{N}$ over some compact orbifold surface without boundary $\hat{\Sigma}$ (the space of leaves).  If this orbifold is bad, which means its universal cover is not a smooth manifold,  then the foliation $\cF$ cannot come from the quotient of a fibration.  As described in \cite{Thurston}, this can only happen if $\hat{\Sigma}$ is the teardrop (a $2$-sphere with one cone point) or a $2$-sphere with two cone points having different cone angles.  Otherwise, $\hat{\Sigma}$ is a good orbifold, which means its universal cover is a smooth manifold.  In that case, one can show (see for instance Theorem~2.5 in \cite{Scott1983}) that $\hat{\Sigma}$ can be covered by a smooth closed surface $\Sigma$.  If $N\to \Sigma$ is the pull-back of $\hat{N}$ to $\Sigma$ and $\Gamma$ is the group of deck transformations of the cover $\Sigma\to \hat{\Sigma}$, then $N\to \Sigma$ is naturally a $\Gamma$-equivariant circle bundle satisfying all the hypotheses in Assumption~\ref{tfb.2b}.  The foliation we obtain by passing to the quotient is then precisely $\cF$.      
\label{sm.1}\end{example}

\begin{example}
As special case of the previous example, consider $\bbR^3\times \bbS^1$ with its natural flat metric and let $\bbZ_k$ act on $\bbR^3\times \bbS^1$ by letting its generator act by rotation by an angle $\frac{2\pi}{k}$ around the $z$-axis in $\bbR^3$ and by rotation by an angle $\frac{2\pi}{k}$ on $\bbS_1$.  This action preserves the metric, so we get a corresponding metric $g_{\cF}$ on the quotient space $(\bbR^3\times \bbS^1)/\bbZ_k$.  If we think of $\bbR^3$ as the interior of the unit ball $\bbB^3$, then $g_{\cF}$ can be thought as an $\cF$-metric associated to the manifold with foliated boundary $X= (\bbB^3\times \bbS^1)/\bbZ_k$.  This example generalizes to multi-Taub-NUT metrics of type $A_{k-1}$ admitting an action of $\bbZ_k$ by isometries, see \cite{Suvaina2011} and \cite{Wright2011}.  In this case, the circle fibration at infinity is replaced by a circle foliation when one passes to the quotient.  More details are provided in \S~\ref{qmtn.0} below.       
\label{multiT.1}\end{example}

\begin{example}
As in Example~\ref{sm.1}, take $\hat{N}\to \hat{\Sigma}$ to be a Seifert fibration over a good compact orbifold surface without boundary $\hat{\Sigma}$.  Since $\bbS^{1}\subset \SU(2)$,  we can enlarge the structure group to form a principal $\SU(2)$-bundle $\hat{P}$ over $\hat{\Sigma}$.  The fibres of $\hat{P}$ then define a foliation $\cF$ on the total space of this bundle.  Except for finitely many leaves diffeomorphic to lens spaces, the leaves of $\cF$ are diffeomorphic to $\bbS^{3}$.  If $\Sigma$ is a smooth closed surface covering $\hat{\Sigma}$ and $\Gamma$ is the group of deck transformations, then the pull-back $P$ of $\hat{P}$ to $\Sigma$ is a $\Gamma$-equivariant $\SU(2)$-bundle satisfying the hypotheses of Assumption~\ref{tfb.2b} and the foliation obtained by passing to the quotient is precisely $\cF$.  This construction still works if instead of $\SU(2)$ we have more generally a smooth compact manifold admitting a free $\bbS^{1}$-action.    
\label{sm.2}\end{example}

We refer to  \S2.1 of \cite{Benameur-Piazza} for further examples.  
For a foliation $\cF$ arising as in Assumption~\ref{tfb.2b}, the holonomy groupoid admits a simple construction.  It is given by 
\[
      \cG= \pa\widetilde{X}\times_{\Phi}\pa\widetilde{X}/\Gamma
\]
with space of units given by $\cG^{(0)}= \pa\widetilde{X}/\Gamma= \pa X$ and with
domain and range maps given by
\[
   d[(\widetilde{m}, \widetilde{m}')] = [\widetilde{m}'], \quad 
    r[(\widetilde{m}, \widetilde{m}')] = [\widetilde{m}]
\]
where the brackets denote equivalence classes modulo the action of the group $\Gamma$.  If $\Gamma_{y}$ is the isotropy group of $y\in Y$, then there is a canonical identification of the leaf $L_{y}= q(\Phi^{-1}(y))$ with the quotient 
$\widetilde{Z}_{y}/\Gamma_{y}$, where $Z_{y}= \Phi^{-1}(y)$.  Moreover, the holonomy cover of $L_{y}$ is then given by the quotient map
\[
      \widetilde{Z}_{y}\to  \widetilde{Z}_{y}/\Gamma_{y}= L_{y}.
\]

The quotient map $q: \pa\widetilde{X}\to \pa X$ should be understood as 
a `resolution' of the foliation $\cF$ into a fibration.  This resolution will
allow us to describe our algebra of pseudodifferential operators in terms
of the fibred cusp operators of Mazzeo and Melrose \cite{Mazzeo-Melrose}.

Let $c: \pa X\times [0,\epsilon)_{x}\hookrightarrow X$ be a collar neighborhood of $\pa X$ compatible with the boundary defining function
$x$, that is, such that $c(\pa X\times \{r\})= x^{-1}(r)$ for all $r\in [0,\epsilon)$.  The quotient map $q:\pa\widetilde{X}\to \pa X$ extends to a map
\begin{equation}
  q_{c}= q\times \Id_{[0,\epsilon)_{x}}: \pa\widetilde{X}\times
 [0,\epsilon)_{x}\to \pa X\times [0,\epsilon)_{x}.
\label{tfb.3}\end{equation}
Then $\widetilde{M}= \pa\widetilde{X}\times [0,\epsilon)_{x}$ is a non-compact 
manifold with boundary, the boundary $\pa \widetilde{M}= \pa\widetilde{X}\times \{0\}$ being also possibly non-compact.  The boundary is equipped with
a fibration structure
\begin{equation}
  \Phi: \pa \widetilde{M}= \pa\widetilde{X}\to Y.
\label{tfb.4}\end{equation}
Although $\pa \widetilde{M}$ and $\widetilde{M}$ are possibly non-compact, we can proceed as in \cite{Mazzeo-Melrose} to 
define the corresponding $\Phi$-double space.  One first blows up the corner
of $\widetilde{M}\times \widetilde{M}$ to obtain the $b$-double space
\begin{equation}
  \widetilde{M}^{2}_{b}= [\widetilde{M}\times \widetilde{M}; \pa \widetilde{M}\times \pa \widetilde{M}]
\label{tfb.5}\end{equation}
with blow-down map $\beta_{b}: \widetilde{M}^{2}_{b}\to \widetilde{M}^{2}$.  
If $x$ and $x'$ are the boundary defining functions for the left
and right factors in $\widetilde{M}\times \widetilde{M}$, then this blow-up amounts
to the introduction of polar coordinates
\begin{equation}
r=\sqrt{ x^{2}+ (x')^{2}}, \; \omega= \frac{x}{r}, \; \omega'= \frac{x'}{r},
\label{tfb.5}\end{equation}
where  $r$ is the boundary defining function of the `new' hypersurface
\begin{equation}
         B= \beta_{b}^{-1}(\pa \widetilde{M}\times \pa \widetilde{M}) \subset \widetilde{M}_{b}^{2},
\label{tfb.6}\end{equation}
while $\omega$ and $\omega'$ are boundary defining functions of the 
`old' hypersurfaces.  Notice that the `new' hypersurface 
\begin{equation}
   B= \SN^{+}(\pa \widetilde{M}\times \pa\widetilde{M})
\label{tfb.7}\end{equation}
is by definition a quarter circle bundle over $\pa\widetilde{M}\times \pa\widetilde{M}$,
therefore giving the natural decomposition
\begin{equation}
 B= \pa\widetilde{M}\times \pa \widetilde{M}\times [-1,1]_{s}, \quad 
   s=\omega-\omega'.
\label{tfb.8}\end{equation}
From this decomposition, we can define the lift of the fibre diagonal
\begin{equation}
  D_{\Phi}= \{ (h,h')\in \pa\widetilde{M}\times \pa\widetilde{M}\; | \; 
     \Phi(h)=\Phi(h') \}
\label{tfb.9}\end{equation}
to the hypersurface $B$ by
\begin{equation}
  \tilde{D}_{\Phi}= \{ (h,h',0)\in \pa \widetilde{M}\times \pa\widetilde{M} \times [-1,1]_{s}
  \; | \; \Phi(h)=\Phi(h') \}.
\label{tfb.10}\end{equation}
The $\Phi$-double space of $\widetilde{M}$ can then be defined by
\begin{equation}
 \widetilde{M}_{\Phi}^{2}= [\widetilde{M}_{b}^{2}; \tilde{D}_{\Phi}]=
     [\widetilde{M}^{2}; \pa\widetilde{M}\times \pa\widetilde{M}; \tilde{D}_{\Phi} ] 
\label{tfb.11}\end{equation}
with blow-down map  $\beta_{\Phi-b}: \widetilde{M}^{2}_{\Phi}
     \to \widetilde{M}^{2}_{b}$
and total blow-down map $\beta_{\Phi}= \beta_{b}\circ \beta_{\Phi-b}$.  Let 
\begin{equation}
\ff_{\Phi}= \overline{ \beta_{\Phi-b}^{-1}(\tilde{D}_{\Phi})}
\label{tfb.12}\end{equation}
be the `new' face created by this blow-up.  It is called the \textbf{front face}
of $\widetilde{M}^{2}_{\Phi}$.  Let also $\widetilde{\Delta}_{\Phi}= \overline{\beta^{-1}_{\Phi}(\Delta_{\widetilde{M}}\setminus \Delta_{\pa \widetilde{M}})}$ be 
 the lift of the diagonal of $\widetilde{M}^{2}$ to the $\Phi$-double space
$\widetilde{M}^{2}_{\Phi}$.    Fibred cusp operators on $\widetilde{M}$ can be defined
as distributions on $\widetilde{M}_{\Phi}^{2}$ which are conormal to the lifted
diagonal $\Delta_{\Phi}$ and decay rapidly at each boundary face except
maybe at $\ff_{\Phi}$.  It is worth pointing out that there is a natural underlying Lie groupoid $\cG_{\Phi}$ given by 
\begin{equation}
  \cG_{\Phi}^{(1)}= \overset{\circ}{\tM^{2}_{\Phi}} \cup \overset{\circ}{\ff}_{\Phi} \subset \tM^{2}_{\Phi}, \quad 
    \cG^{(0)}_{\Phi}= \widetilde{\Delta}_{\Phi}\cong \tM,
    \label{ug.1}\end{equation}
with domain and range maps defined by $d=\pr_{R}\circ \beta_{\Phi}$ and $r=\pr_{L}\circ \beta_{\Phi}$
where $\pr_{R}: \widetilde{M}\times \tM\to \tM$ and $\pr_{L}: \tM\times \tM\to \tM$ are projections on the right and left factors respectively.  The map 
\begin{equation}
   \iota: \overset{\circ}{\tM}\times  \overset{\circ}{\tM} \ni (\widetilde{m}, \widetilde{m}') \mapsto
    (\widetilde{m}', \widetilde{m}) \in     \overset{\circ}{\tM}\times \overset{\circ}{\tM}
 \label{ug.2}\end{equation}
interchanging the two factors extends uniquely to a smooth map $\iota: \cG^{(1)}_{\Phi}\to \cG^{(1)}_{\Phi}$ defining the inverse map of $\cG_{\Phi}$.  In the same way, the composition map on the pair groupoid $\overset{\circ}{\tM}\times \overset{\circ}{\tM}$ extends uniquely to give the composition map $\mu: \cG^{(2)}_{\Phi}\to \cG^{(1)}_{\Phi}$ with 
\begin{equation}
   \cG^{(2)}_{\Phi}= \{ (\alpha,\beta)\in \cG^{(1)}_{\Phi}\quad | \quad r(\beta)=d(\alpha) \}.  
\label{ug.3}\end{equation} 

Notice that the diagonal action of $\Gamma$ on $\widetilde{M}\times \widetilde{M}$
naturally extends to an action on the $\Phi$-double space $\widetilde{M}^{2}_{\Phi}$.  Let $R: \Gamma \to \Diff(\widetilde{M}^{2}_{\Phi})$ denote this action.  
Consider the density bundle ${}^{\Phi}\Omega_{R}'= \beta^{*}_{\Phi} {}^{\Phi}\Omega_{R}$ where ${}^{\Phi}\Omega_{R}= \pr_{R}^{*}{}^{\Phi}\Omega$ is the pull-back from the projection on the right factor of the $\Phi$-density
bundle ${}^{\Phi}\Omega\to \widetilde{M}$, which is the density bundle associated
to the $\Phi$-tangent bundle ${}^{\Phi}T\widetilde{M}$.  The $\Gamma$-action on $\widetilde{M}$ naturally induces a $\Gamma$-action on the $\Phi$-density bundle ${}^{\Phi}\Omega$ making it a $\Gamma$-equivariant vector bundle over $\widetilde{M}$.  Since the map $\pr_{R}\circ \beta_{\Phi}$ is equivariant with respect to the $\Gamma$-actions on $\widetilde{M}^{2}_{\Phi}$ and
$\widetilde{M}$, we see that the $\Gamma$-action on ${}^{\Phi}\Omega$ lifts to a $\Gamma$-action giving ${}^{\Phi}\Omega_{R}'$ the structure of a $\Gamma$-equivariant vector bundle over $\widetilde{M}_{\Phi}^{2}$.  The action of $\Gamma$ on $\tM^{2}_{\Phi}$ restricts to give an action of $\Gamma$ on $\cG_{\Phi}$ compatible with the groupoid structure, that is to say, $\Gamma$ acts smoothly on $\cG^{(0)}_{\Phi}$, $\cG^{(1)}_{\Phi}$ and $\cG^{(2)}_{\Phi}$ in such a way that 
the structure maps $(d,r,\iota, \mu)$ are equivariant with respect to these actions.  We can therefore get a new groupoid $\cG_{\Phi,\Gamma}$ by taking the quotient,
\begin{equation}
   \cG^{(1)}_{\Phi,\Gamma}= \cG^{(1)}_{\Phi}/\Gamma, \quad \cG^{(0)}_{\Phi,\Gamma}= \cG^{(0)}_{\Phi}/\Gamma\cong M.
\label{ug.4}\end{equation}

\begin{definition}
The space $\Psi^{m}_{\Phi,\Gamma}(\widetilde{M})$ of $\Gamma$-invariant fibred cusp pseudodifferential operators of order $m$  on $\widetilde{M}$ 
is defined to be the space of conormal distributions 
$K\in I^{m}(\widetilde{M}^{2}_{\Phi}, \Delta_{\Phi}; {}^{\Phi}\Omega_{R}')$ 
such that 
\begin{itemize}
\item[(i)] $K\equiv 0$ at $\pa\widetilde{M}^{2}_{\Phi}\setminus \ff_{\Phi}$, 
that is, $K$ vanishes to infinite order at all hypersurfaces of $\pa M^{2}_{\Phi}$
except possibly at the front face $\ff_{\Phi}$;
\item[(ii)] $K$ is $\Gamma$-invariant with respect to the diagonal action
of $\Gamma$ on $\widetilde{M}^{2}_{\Phi}$,
\[
       R(\gamma)_{*}K= K \; \forall \ \gamma\in \Gamma;
\]  
\item[(iii)] By $(ii)$ $K$ descends to define a distribution on the quotient
$\widetilde{M}_{\Phi}^{2}/\Gamma$ and we require that as an element of 
$\mathcal{C}^{-\infty}(\widetilde{M}^{2}_{\Phi}/\Gamma; {}^{\Phi}\Omega_{R}'/\Gamma)$, it has compact support.
\end{itemize}
The space $\Psi^{m}_{\Phi,\Gamma-\ph}(\widetilde{M})$ of polyhomogeneous (or classical) pseudodifferential $\cF$-operators of order $m$ is defined similarly, but using the space
 $I^{m}_{\ph}(\widetilde{M}^{2}_{\Phi}, \Delta_{\Phi}, {}^{\Phi}\Omega_{R}')$ 
of polyhomogeneous conormal distributions of order $m$.
\label{tfb.13}\end{definition}
Because of condition (iii) in Definition~\ref{tfb.13}, we see that Proposition~3 in \cite{Mazzeo-Melrose} (which says that $\Phi$-operators map $\CI(X)$  to $\CI(X)$ and $\dot{\cC}^{\infty}(X)$ to $\dot{\cC}^{\infty}(X)$)  still holds, so that an operator 
$P\in \Psi^{m}_{\Phi,\Gamma}(\widetilde{M})$ naturally gives continuous linear maps
\begin{equation}
 P: \CI(\widetilde{M})\to \CI(\widetilde{M}), \quad P: \dot{\mathcal{C}}^{\infty}(\widetilde{M})\to
 \dot{\mathcal{C}}^{\infty}(\widetilde{M}).
\label{tfb.14}\end{equation}
Condition (ii) of Definition~\ref{tfb.13} can be reformulated as saying that the action of $P$ on $\CI(\widetilde{M})$ is 
$\Gamma$-equivariant,
\begin{equation}
   R(\gamma)^{*}\circ P \circ R(\gamma^{-1})^{*}= P \; \;
     \forall \ \gamma\in \Gamma.
\label{tfb.15}\end{equation} 
This $\Gamma$-equivariance allows us to define an action of $P$ 
on the smooth functions defined on the quotient $M=\widetilde{M}/\Gamma=
\pa X\times [0,\epsilon)_{x}$.
Indeed, given $f\in \CI(M)$, let $\widetilde{f}= q_{c}^{*}f$ be its pull-back to
$\widetilde{M}$.  Clearly, a function on $\widetilde{M}$ can be written as a pull-back of a function on $M$ if and only if it is $\Gamma$-invariant.  Thus, $\widetilde{f}$ is 
$\Gamma$-invariant and by the $\Gamma$-invariance of $P$, we have
\begin{equation}
     R(\gamma)^{*}(P\widetilde{f})= P(R(\gamma)^{*}\widetilde{f})= P\widetilde{f}, \;
     \forall \ \gamma\in \Gamma.
\label{tfb.16}\end{equation}
This means there exists a unique function $g\in \CI(M)$ such that $P\widetilde{f}=
q_{c}^{*}g$.  We define the action of $P$ on $f$ by $Pf=g$.  Thus,
$Pf\in \CI(M)$ is the unique function such that  $P(q_{c}^{*}f)= q_{c}^{*}Pf$.
This defines continuous linear maps
\begin{equation}
  R_{q_{c}}(P): \CI(M)\to \CI(M), \quad R_{q_{c}}(P): \dot{\mathcal{C}}^{\infty}(M)
    \to \dot{\mathcal{C}}^{\infty}(M).
\label{tfb.17}\end{equation}
In fact, because of condition (iii) in Definition~\ref{tfb.13}, we get more
precisely maps of the form
\begin{equation}
  R_{q_{c}}(P): \CI(M)\to \CI_{c}(M), \quad R_{q_{c}}(P): \dot{\mathcal{C}}^{\infty}(M)
    \to \dot{\mathcal{C}}^{\infty}_{c}(M).
\label{tfb.17}\end{equation}
\begin{definition}
We define the space $\Psi^{m}_{\cF}(M)$ of foliated cusp pseudodifferential 
operators of order $m$ (or $\cF$-operators) on $M$ to be the image of $\Psi^{m}_{\Phi,\Gamma}(\widetilde{M})$ under the representation $R_{q_{c}}:\Psi^{m}_{\Phi,\Gamma}(\widetilde{M})\to
\End(\CI(M))$.  The space $\Psi^{m}_{\cF-\ph}(M)$ of polyhomogeneous $\cF$-operators of order $m$ is defined similarly.   
\label{tfb.17b}\end{definition}

Given $P\in \Psi^{m}_{\cF}(M)$, we can make $P$ act on smooth functions on
$X$ using the collar neighborhood $c: M\hookrightarrow X$,  
\begin{equation}
c_{*}\circ P\circ c^{*} :\CI(X)\to \CI(X), \quad 
c_{*}\circ P \circ c^{*}: \dot{\mathcal{C}}^{\infty}(X)\to  \dot{\mathcal{C}}^{\infty}(X),
\label{tfb.18}\end{equation} 
where $c^{*}: \CI(X)\to \CI(M)$ is the pull-back while $c_{*}:\CI_{c}(M)\to \CI(X)$ is the pushforward.  This defines a map 
\begin{equation}
  \pi_{c}: \Psi^{m}_{\cF}(M)\to \End(\CI(X)).
\label{tfb.19}\end{equation}
On $X$, we can also consider the algebra of pseudodifferential operators
with Schwartz kernels vanishing with all their derivatives on $\pa X^{2}_{b}$,
\begin{equation}
   \dot\Psi^{m}(X)= \{ K \in I^{m}(X^{2}_{b},\Delta_{b};\Omega_{R}) \;  | \; K\equiv 0 \; \mbox{on} \; \pa X^{2}_{b}\},
\label{tfb.20}\end{equation}
where $\Delta_{b}$ is the lift of the diagonal from $X\times X$ to $X^{2}_{b}$.
\begin{definition}
The space of \textbf{foliated cusp pseudodifferential operators}
(or $\cF$-pseudodifferential operators) of order $m$ on $X$ is
\[
     \Psi^{m}_{\cF}(X)= \pi_{c}(\Psi^{m}_{\cF}(M))+ 
\dot{\Psi}^{m}(X).
\]
The space of \textbf{polyhomogeneous} foliated cusp pseudodifferential
operators of order $m$, $\Psi^{m}_{\cF-\ph}(X)$, is defined similarly.  More 
generally, if $E$ and $F$ are complex vector bundles on $X$, we can define
the corresponding spaces $\Psi^{m}_{\cF}(X;E,F)$ and $\Psi^{m}_{\cF-\ph}(X;E,F)$ of $\cF$-operators mapping sections of $E$ to sections of $F$.  
\label{tfb.21}\end{definition}
\begin{remark}
Because of condition (i) in Definition~\ref{tfb.13}, notice that 
\[
    P\in \Psi^{m}_{\cF}(X;E,F) \; \Longrightarrow \; x^{\ell}\circ P \circ x^{-\ell}\in
    \Psi^{m}_{\cF}(X;E,F)
\]
since conjugation by $x^{\ell}$ corresponds to multiplication of  the Schwartz kernel of $P$  by the function $(\frac{x}{x'})^{\ell}$.
\label{conjugation.1}\end{remark}
The underlying Lie groupoid $\cG_{\cF}$ associated to $\cF$-operators is obtained by gluing the Lie groupoid $\cG_{\Phi,\Gamma}$ of \eqref{ug.4} with the pair groupoid $\overset{\circ}{X}\times
\overset{\circ}{X}$ using the quotient map with respect to the action of $\Gamma$ on the left factor
\begin{equation}
 \begin{array}{llcl} 
     q_{L} : & (\overset{\circ}{\tM}\times \overset{\circ}{\tM})/\Gamma & \to & \overset{\circ}{M}\times \overset{\circ}{M} \\
       & [(\widetilde{m},\widetilde{m})]  &\mapsto  & ([\widetilde{m}], [\widetilde{m}'])
 \end{array}
\label{ug.5}\end{equation}   
where the brackets denote equivalence classes modulo the action of the group $\Gamma$.  Thus, $\cG_{\cF}$ is given by
\begin{equation}
    \cG^{(1)}_{\cF}= \cG^{(1)}_{\Phi,\Gamma} \cup_{q_{L}} ( \overset{\circ}{X}\times \overset{\circ}{X}), \quad \cG^{(0)}_{\cF}=X.
\label{ug.6}\end{equation}
One peculiar feature of this groupoid is that $\cG^{(1)}_{\cF}$ is not Hausdorff. 

To show that  $\cF$-operators compose nicely, we need some preparation.

\begin{definition}
Let $\Gamma$ be a discrete group acting smoothly on a manifold $\widetilde{W}$
in such a way that the quotient $W= \widetilde{W}/\Gamma$ is a compact manifold.  A \textbf{partition of unity relative to} $\Gamma$ is then a smooth function 
$\varphi\in \CI_{c}(\widetilde{W})$ such that 
\[
           \sum_{\gamma\in \Gamma} \gamma^{*}\varphi \equiv 1.  
\]
Since $\varphi$ has compact support, notice that for all $w\in W$
the sum $\sum_{\gamma\in \Gamma} \gamma^{*}\varphi(w)$ is finite.
\label{lift.1}\end{definition}
As explained in p.53 of \cite{Atiyah}, a partition of unity relative to
$\Gamma$ is easily constructed.  Indeed, let $\cU_{i}$ be a finite
open covering of $W$ such that $\widetilde{W}\to W$ has a smooth section 
$s_{i}$ over $\cU_{i}$.  Let $\phi_{i}$ be a partition of unity on $W$ with
$\supp(\phi_{i})\subset \cU_{i}$.  Using the section $s_{i}$, we can lift the 
function $\phi_{i}$ to a function $\widetilde{\phi}_{i}\in\CI_{c}(\widetilde{W})$ such
that $\supp(\widetilde{\phi}_{i})\subset s_{i}(\cU_{i})$ and $s^{*}_{i}\widetilde{\phi}_{i}=\phi_{i}$.  Then the function $\varphi= \sum_{i}\widetilde{\phi}_{i}$ is a partition
of unity relative to $\Gamma$. 

Notice that a partition of unity relative to $\Gamma$ on $\pa\widetilde{M}$ can
be used to give an alternative description of the action of a $\Gamma$-invariant operator $P\in \Psi^{m}_{\Phi,\Gamma}(\widetilde{M})$ on $\CI(M)$.  Indeed,
if $\pa \varphi\in \CI(\pa \widetilde{M})$ is such a partition of unity and 
$\varphi\in \CI(\widetilde{M})$ is its pull-back to $\widetilde{M}$, then
$P\in \Psi^{m}_{\Phi,\Gamma}(\widetilde{M})$ acts on $f\in \CI(M)$ 
by 
\[
          Pf= (q_{c})_{*}P (\varphi q_{c}^{*}f)
\]
where for $g\in\CI_{c}(\tM)$, $(q_{c})_{*}g(m)= \sum_{\gamma\in \Gamma} g(\widetilde{m}\cdot\gamma)$ with $\widetilde{m}\in \widetilde{M}$ chosen such that $q_{c}(\widetilde{m})=m$.

On $M$ we can consider the analog of \eqref{tfb.20}, namely
\begin{equation}
\dot{\Psi}^{m}(M)= \{K\in I^{m}(M^{2}_{b},\Delta_{b};\Omega_{R})\; | \;
 K\equiv 0 \; \mbox{on}\; \pa M^{2}_{b}, \; \supp(K) \subset\subset 
 M^{2}_{b}\}.
\label{lift.2}\end{equation}
Similarly, we can define $\dot{\Psi}^{m}_{\cF}(M)$ to be the space of 
operators $Q\in \Psi^{m}_{\cF}(M)$ that can be represented by  Schwartz 
kernels on $\widetilde{M}^{2}_{\Phi}$ vanishing with all their derivatives on 
$\pa\widetilde{M}^{2}_{\Phi}$.  

\begin{lemma}
We have the identification $\dot{\Psi}^{m}(M;E,F)= \dot{\Psi}^{m}_{\cF}(M;E,F)$.\label{lift.3}\end{lemma}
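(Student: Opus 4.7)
The plan is to prove the two inclusions separately, exploiting the simplifying observation that kernels on either side vanish to infinite order at every boundary face of their respective double spaces (all of $\pa M^{2}_{b}$ for $\dot{\Psi}^{m}(M)$, and all of $\pa \widetilde{M}^{2}_{\Phi}$ including the front face $\ff_{\Phi}$ for $\dot{\Psi}^{m}_{\cF}(M)$). Consequently both classes reduce to compactly supported conormal distributions of order $m$ on the interior, where the $\Phi$-blow-up structure is invisible and only the covering structure $q_{c}\times q_{c}:\inte{\widetilde{M}}\times\inte{\widetilde{M}}\to \inte{M}\times\inte{M}$ matters. I suppress the bundle coefficients $E,F$ throughout, since they play no role.

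For the inclusion $\dot{\Psi}^{m}_{\cF}(M)\subset \dot{\Psi}^{m}(M)$: given a $\Gamma$-invariant kernel $\widetilde{K}$ on $\widetilde{M}^{2}_{\Phi}$ with compact support modulo $\Gamma$ and vanishing at $\pa \widetilde{M}^{2}_{\Phi}$, I will first observe that $\widetilde{K}$ is in effect a conormal distribution on $\inte{\widetilde{M}}\times \inte{\widetilde{M}}$ on which the diagonal $\Gamma$-action is free and properly discontinuous. Starting from the partition-of-unity formula $R_{q_{c}}(P)f=(q_{c})_{*}P(\varphi\, q_{c}^{*}f)$, a direct computation (changing variables by $\gamma$ and using $\Gamma$-invariance of $\widetilde{K}$) identifies the Schwartz kernel of $R_{q_{c}}(\widetilde{K})$ on $\inte{M}\times\inte{M}$ with
\[
K_{Q}(m,m')=\sum_{\gamma\in\Gamma}\widetilde{K}(\tilde{m},\tilde{m}'\cdot\gamma),
\]
for arbitrary lifts $\tilde m, \tilde m'$. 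Compact support mod $\Gamma$ forces this sum to be locally finite and $K_{Q}$ to be compactly supported in $\inte{M}\times\inte{M}$. Conormality at $\Delta_{M}$ of order $m$ follows because $q_{c}\times q_{c}$ is a local diffeomorphism on interiors; vanishing at $\pa M^{2}_{b}$ is automatic since the support is interior.

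For the reverse inclusion $\dot{\Psi}^{m}(M)\subset \dot{\Psi}^{m}_{\cF}(M)$: given $K$ compactly supported in $\inte{M}\times\inte{M}$ and conormal at $\Delta_{M}$, I will construct a $\Gamma$-invariant lift $\widetilde{K}$. Choose a finite open cover $\{\cU_{i}\}$ of $M$ on which $q_{c}$ admits smooth sections $s_{i}$, a subordinate partition of unity $\{\phi_{i}\}$, and a partition of unity $\varphi=\sum_{i}\tilde\phi_{i}$ relative to $\Gamma$ built via those sections as on p.~53 of \cite{Atiyah}. Decomposing $K=\sum_{i,j}\phi_{i}(m)\phi_{j}(m')K(m,m')$, lift each piece through $(s_{i},s_{j})$ to a distribution supported in the single sheet $s_{i}(\cU_{i})\times s_{j}(\cU_{j})\subset \inte{\widetilde{M}}\times\inte{\widetilde{M}}$, then symmetrize by summing over the diagonal $\Gamma$-action. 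The resulting $\widetilde{K}$ is $\Gamma$-invariant with compact support modulo $\Gamma$, vanishes at $\pa \widetilde{M}^{2}_{\Phi}$, and a short partition-of-unity computation shows $R_{q_{c}}(\widetilde{K})=Q$.

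The main technical point, and the only non-routine step, is ensuring that the conormal singularity of $\widetilde{K}$ is located only at the main diagonal $\Delta_{\Phi}\cap \inte{\widetilde{M}}^{2}=\Delta_{\inte{\widetilde{M}}}$ and not at the translated diagonals $\Delta_{\gamma}=\{(\tilde m,\tilde m\cdot\gamma)\mid\tilde m\in\inte{\widetilde{M}}\}$ for $\gamma\neq e$; the naive pullback $(q_{c}\times q_{c})^{*}K$ is conormal at every $\Delta_{\gamma}$. This is controlled precisely by restricting each lifted piece to a single sheet $s_{i}(\cU_{i})\times s_{j}(\cU_{j})$, which is possible because the diagonal $\Gamma$-action on $\inte{\widetilde{M}}\times\inte{\widetilde{M}}$ is properly discontinuous, so each compact piece of the support of $K$ sits inside an evenly covered set for the quotient map $q_{L}$ of \eqref{ug.5}.
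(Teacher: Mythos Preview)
Your treatment of the inclusion $\dot{\Psi}^{m}_{\cF}(M)\subset\dot{\Psi}^{m}(M)$ is correct and in fact more explicit than the paper, which simply declares this direction obvious.

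For the reverse inclusion, however, there is a genuine gap in your lifting construction. You decompose $K=\sum_{i,j}\phi_{i}(m)\phi_{j}(m')K(m,m')$ and lift the $(i,j)$ piece via $(s_{i},s_{j})$. When $i\neq j$ and $\cU_{i}\cap\cU_{j}\neq\emptyset$, the singular support of $\phi_{i}\phi_{j}K$ meets the diagonal over $\cU_{i}\cap\cU_{j}$, and its lift lands on $\{(s_{i}(m),s_{j}(m)):m\in\cU_{i}\cap\cU_{j}\}$. If $s_{i}$ and $s_{j}$ disagree on the overlap (related by some $\gamma\neq e$), this set lies on a translated diagonal $\Delta_{\gamma}$, not on $\Delta_{\inte{\widetilde{M}}}$; symmetrizing under the diagonal $\Gamma$-action then propagates the singularity to \emph{every} $\Delta_{\gamma}$ rather than moving it onto the main diagonal. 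Your final paragraph correctly identifies the danger but the proposed fix---that each piece sits in an evenly covered set for $q_{L}$---does not resolve it: the set $s_{i}(\cU_{i})\times s_{j}(\cU_{j})$ is indeed a single sheet for the diagonal action, but nothing forces it to be the sheet containing $\Delta_{\inte{\widetilde{M}}}$.

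The paper avoids this by first splitting $Q=Q_{1}+Q_{2}$ with $K_{Q_{1}}$ supported in $\bigcup_{i}V_{i}\times V_{i}$ and $Q_{2}\in\dot{\Psi}^{-\infty}(M)$. For $Q_{1}$ one uses a partition of unity subordinate to the \emph{diagonal} cover $\{V_{i}\times V_{i}\}$ (so only lifts $(s_{i},s_{i})$ with matching sections occur, and the singularity lands on $\Delta_{\inte{\widetilde{M}}}$). For $Q_{2}$ the kernel is smooth, so any lift---for instance via a partition of unity relative to $\Gamma$ for $q_{L}$---works without introducing singularities. Your construction becomes correct once you insert this near-diagonal/smooth splitting and restrict the conormal part to same-index lifts.
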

\begin{proof}
Without loss of generality, we can assume $E=F= \underline{\bbC}$.  Since
the inclusion $\dot{\Psi}^{m}_{\cF}(M)\subset \dot{\Psi}^{m}(M)$ is obvious, what
is left to show is that $\dot{\Psi}^{m}(M)\subset\dot{\Psi}_{\cF}^{m}(M)$.  Let
$Q\in \dot{\Psi}^{m}(M)$ be given.   To establish that $Q\in \dot{\Psi}^{m}_{\cF}(M)$, we need to show that its Schwartz kernel can be lifted to a $\Gamma$-invariant distribution in $I^{m}(\widetilde{M}^{2}_{\Phi}, \Delta_{\Phi}, {}^{\Phi}\Omega_{R}')$.  

Let $\cU_{i}$ be a finite open covering of $\pa M$ such that $\pa\widetilde{M}\to \pa M$
has a smooth section $\pa s_{i}$ over $\cU_{i}$.  Let $V_{i}=\cU_{i}\times [0,\epsilon)_{x}$ be the corresponding open covering of $M$ with sections
$s_{i}$ of $\widetilde{M}\to M$ over $V_{i}$.  Decompose the operator $Q$ in 
such a way that 
\begin{equation}
  Q= Q_{1} + Q_{2}, \quad Q_{1}\in \dot{\Psi}^{m}(M), \; Q_{2}\in 
\dot{\Psi}^{-\infty}(M),
\label{lift.4}\end{equation}
with $Q_{1}$ having its Schwartz kernel $K_{Q_{1}}$ supported near the 
diagonal:
\begin{equation}
  \supp K_{Q_{1}}\subset \subset  \bigcup_{i} V_{i}\times V_{i}.
\label{lift.5}\end{equation}
Let $\phi_{i}$ be a smooth partition of unity of $\bigcup_{i} V_{i}\times V_{i}$ with $\supp \phi_{i} \subset V_{i}\times V_{i}$ so that 
\begin{equation}
  K_{Q_{1}}= \sum_{i} \phi_{i} K_{Q_{1}}.
\label{lift.6}\end{equation}
Using the section $s_{i}\times s_{i}:V_{i}\times V_{i}\to \widetilde{M}\times \widetilde{M}$, we can lift $\phi_{i}K_{Q_{1}}$ to a compactly supported Schwartz kernel
$K_{i}$ in $\widetilde{M}\times \widetilde{M}$ and define a corresponding $\Gamma$-invariant Schwartz kernel
\[
    K_{i}^{\Gamma}= \sum_{\gamma\in \Gamma} (\gamma\times \gamma)^{*}K_{i}
\]
under the diagonal action.  Summing over $i$, we get a Schwartz kernel 
\[
   K_{\widetilde{Q}_{1}}= \sum_{i} K^{\Gamma}_{i}
\]
defining a $\Gamma$-invariant  operator $\widetilde{Q}_{1}\in \dot{\Psi}^{m}_{\Gamma}(\widetilde{M})=
\dot{\Psi}^{m}_{\Phi,\Gamma}(\widetilde{M})$.  By construction, $\widetilde{Q}_{1}$ is
such that 
\[
       \widetilde{Q}_{1} q_{c}^{*}f= q_{c}^{*}(Q_{1} f), \quad \forall \ f\in \CI(M).
\]
This means $Q_{1}\in \dot{\Psi}^{m}_{\cF}(M)$.

To find a lift for the operator $Q_{2}\in \dot{\Psi}^{-\infty}(M)$, notice
that there is a sequence of two quotient maps
\begin{equation}
\xymatrix{
   \widetilde{M}\times\widetilde{M} \ar[r]^(.45){q_{D}} & \widetilde{M}\times \widetilde{M}/\Gamma 
\ar[r]^(.55){q_{L}} & M\times M
}
\label{lift.7}\end{equation} 
where $q_{D}$ is the quotient map with respect to the diagonal action of 
$\Gamma $ on $\widetilde{M}\times \widetilde{M}$ and $q_{L}$ is the quotient map
with respect to the action of $\Gamma$ on the left factor.  If 
$\varphi\in \CI(\widetilde{M}\times\widetilde{M}/\Gamma)$ is a choice of partition
of unity relative to $\Gamma$ for the quotient map $q_{L}$, then 
$\varphi q_{L}^{*}K_{Q_{2}}$ is a lift of $K_{Q_{2}}$ to $\widetilde{M}\times\widetilde{M}/\Gamma$.  Since $\varphi q_{L}^{*}K_{Q_{2}}$ vanishes with all its derivatives
at the boundary of $\widetilde{M}\times \widetilde{M}/\Gamma$, this can be further lifted
to a smooth Schwartz kernel on $\widetilde{M}^{2}_{\Phi}/\Gamma$ vanishing with
all its derivatives at the boundary.  This shows that $Q_{2}\in \dot{\Psi}^{-\infty}_{\cF}(M)$. 
\end{proof}

\begin{Theorem}[composition]
The space of foliated cusp pseudodifferential operators is closed under composition by action on $\CI(X)$,
\[
    \Psi^{m}_{\cF}(X;F,G)\circ \Psi^{m'}_{\cF}(X;E,F) \subset \Psi^{m+m'}_{\cF}(X;E,G).
\]
A similar result holds for polyhomogeneous foliated cusp pseudodifferential operators.
\label{tfb.22}\end{Theorem}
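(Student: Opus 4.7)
The plan is to reduce the composition to the analogous statement on the collar $M$, lift each $\cF$-operator to a $\Gamma$-invariant $\Phi$-operator on $\widetilde M$, invoke the Mazzeo--Melrose composition theorem of \cite{Mazzeo-Melrose} locally, and control supports using the proper discontinuity of the $\Gamma$-action. By Definition~\ref{tfb.21} we write $P = \pi_c(P_M)+P_0$ and $Q = \pi_c(Q_M)+Q_0$ with $P_M\in\Psi^m_\cF(M)$, $Q_M\in\Psi^{m'}_\cF(M)$ and $P_0,Q_0\in\dot\Psi^*(X)$. Expanding $PQ$ produces four summands. The term $P_0Q_0$ lies in $\dot\Psi^{m+m'}(X)$ by standard composition of properly supported conormal operators whose kernels vanish to infinite order at $\pa X^2_b$; the two mixed terms also fall in $\dot\Psi^{m+m'}(X)$, since composing with an operator whose kernel vanishes to all orders at the boundary kills any front-face singularity, and Lemma~\ref{lift.3} then places the result in $\dot\Psi^{m+m'}(X)\subset\Psi^{m+m'}_\cF(X)$. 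So the core of the theorem reduces to showing $P_M Q_M\in\Psi^{m+m'}_\cF(M)$.

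By Definition~\ref{tfb.17b} choose $\Gamma$-invariant lifts $\widetilde P\in\Psi^m_{\Phi,\Gamma}(\widetilde M)$ and $\widetilde Q\in\Psi^{m'}_{\Phi,\Gamma}(\widetilde M)$ with $R_{q_c}(\widetilde P)=P_M$ and $R_{q_c}(\widetilde Q)=Q_M$, and fix a partition of unity $\varphi$ on $\widetilde M$ relative to $\Gamma$, as constructed after Definition~\ref{lift.1}. A direct computation of the Schwartz kernel $K_{P_M Q_M}$ on $M^2$, in which the $M$-integration is unfolded to $\widetilde M$ against $\varphi$ and the $\Gamma$-invariance of $\widetilde Q$ is then used to absorb the identity $\sum_{\gamma}\gamma^*\varphi\equiv 1$, yields
\[
  K_{P_M Q_M}\bigl(q_c(\widetilde m),\,q_c(\widetilde m'')\bigr) \;=\; \sum_{\alpha\in\Gamma} K_{\widetilde P\widetilde Q}\bigl(\widetilde m,\,\widetilde m''\cdot\alpha\bigr),
\]
with $K_{\widetilde P\widetilde Q}$ the classical composition kernel on $\widetilde M^2_\Phi$. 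This is exactly the statement $P_M Q_M = R_{q_c}(\widetilde P\widetilde Q)$, provided $\widetilde P\widetilde Q$ is shown to lie in $\Psi^{m+m'}_{\Phi,\Gamma}(\widetilde M)$.

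To establish this last assertion, decompose
\[
   \widetilde P\widetilde Q \;=\; \sum_{\gamma\in\Gamma}\widetilde P\,M_{\gamma^*\varphi}\,\widetilde Q,
\]
where $M_{\phi}$ denotes multiplication by $\phi$. Each summand is a product whose middle factor has compactly supported kernel, so both the left and right kernel supports are compact subsets of $\widetilde M$; the Mazzeo--Melrose composition theorem, being local on $\widetilde M^2_\Phi$, applies to place each summand in the compactly supported part $\Psi^{m+m'}_{\Phi,c}(\widetilde M)$, and verifies conditions~(i)--(ii) of Definition~\ref{tfb.13} for the total sum, the $\Gamma$-invariance being immediate from the $\Gamma$-invariance of $\widetilde P$ and $\widetilde Q$. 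The polyhomogeneous case is identical since Mazzeo--Melrose composition preserves polyhomogeneity.

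The main technical obstacle is the verification of condition~(iii): one must show that $K_{\widetilde P\widetilde Q}$ has compact image in $\widetilde M^2_\Phi/\Gamma$, equivalently that the displayed sum over $\gamma$ is locally finite. Both statements hinge on combining the compactness modulo $\Gamma$ of $\operatorname{supp}K_{\widetilde P}$ and $\operatorname{supp}K_{\widetilde Q}$ with the proper discontinuity of the $\Gamma$-action on $\pa\widetilde X$ (and hence on $\widetilde M$), which ensures that for any two compact sets $C_1,C_2\subset\widetilde M$ only finitely many $\gamma\in\Gamma$ satisfy $(C_1\cdot\gamma)\cap C_2\neq\emptyset$. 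Once this support analysis is in hand, the remainder of the argument is essentially formal.
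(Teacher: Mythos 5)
Your proposal is correct and follows essentially the same route as the paper: split each operator into its collar part and its $\dot\Psi$ part, handle the collar composition by lifting to $\Gamma$-invariant $\Phi$-operators on $\widetilde M$ and invoking the Mazzeo--Melrose composition theorem together with proper discontinuity to verify the support condition~(iii), and use Lemma~\ref{lift.3} for the mixed terms. The only place you are terser than the paper is in those mixed terms, where one should first decompose the $\dot\Psi$ factor into a residual piece, a near-diagonal piece supported in the collar $M^2_b$ (to which Lemma~\ref{lift.3} actually applies), and a piece compactly supported in the interior, as in \eqref{lift.9}--\eqref{lift.13}; with that decomposition made explicit your argument is complete.
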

\begin{proof}
Without loss of generality, we can assume $E=F=G=\underline{\bbC}$.
Clearly, $\dot{\Psi}^{m}(X)\circ \dot{\Psi}^{m'}(X)\subset
\dot{\Psi}^{m+m'}(X)$.  Because of $(iii)$ in Definition~\ref{tfb.13}, the 
composition result of \cite{Mazzeo-Melrose} applies and we get
\begin{equation}
  \Psi^{m}_{\cF}(M)\circ \Psi^{m'}_{\cF}(M) \subset \Psi^{m+m'}_{\cF}(M)    
\label{tfb.23}\end{equation}
since the $\Gamma$-invariance (condition (ii)) is easily seen to be preserved under composition.  

To complete the proof, we need to show that given $P\in \pi_{c}(\Psi^{m}_{\cF}(M))$ and $Q\in \dot{\Psi}^{m'}(X)$, we have that 
\begin{equation}
 PQ\in \Psi^{m+m'}_{\cF}(X), \quad QP\in \Psi^{m+m'}_{\cF}(X).
\label{lift.8}\end{equation}
First, let us decompose $Q$ as a sum of two operators,
\begin{equation}
  Q= Q_{1}+ Q_{2}, \quad  Q_{1}\in \dot{\Psi}^{m'}(X), \quad Q_{2}\in \dot{\Psi}^{-\infty}(X),
\label{lift.9}\end{equation}
with $Q_{1}$ having its Schwartz kernel supported near the diagonal in
$X\times X$.  By looking at the mapping properties, one concludes immediately
that 
\begin{equation}
    PQ_{2}, Q_{2}P \in \dot{\Psi}^{-\infty}(X)\subset \Psi^{m+m'}_{\cF}(X).
\label{lift.10}\end{equation}
Choosing our decomposition \eqref{lift.9} so that $Q_{1}$ has its Schwartz 
kernel supported sufficiently close to the diagonal, we can further decompose
$Q_{1}$ into a sum of two operators
\begin{equation}
   Q_{1}= Q_{1}' + Q_{1}'', \quad Q_{1}' \in \dot{\Psi}^{m'}(M), \; Q_{1}''
\in \dot{\Psi}^{m'}(X)
\label{lift.11}\end{equation}
in such a way that $K_{Q_{1}''}$ is compactly supported in 
$\inte{X}\times \inte{X}$ and such that 
\begin{equation}
 PQ_{1}''= Q_{1}''P=0.
\label{lift.12}\end{equation}
By Lemma~\ref{lift.3}, we know that $Q_{1}'\in \dot{\Psi}^{m'}_{\cF}(M)$,
so by \eqref{tfb.23}, we have that
\begin{equation}
      PQ_{1}'\in \Psi^{m+m'}_{\cF}(X), \quad Q_{1}'P\in \Psi^{m+m'}_{\cF}(X), 
\label{lift.13}\end{equation}
which completes the proof.

\end{proof}

Let $\nu_{\cF}$ be the density associated to a choice of $\cF$-metric.  If $E$ and $F$ are smooth complex vector bundles over $X$ equipped with Hermitian metrics $h_{E}$ and $h_{F}$ respectively, then we can define the formal adjoint $P^{*}: \dot{\mathcal{C}}^{\infty}(X;F)\to \mathcal{C}^{-\infty}(X;E)$ of an $\cF$-operator $P\in\Psi^{m}_{\cF}(X;E,F)$ by 
\begin{equation}
 \langle P^{*}f, e\rangle_{L^{2}} = \langle f, Pe\rangle_{L^{2}}, \quad e\in\dot{\cC}^{\infty}(X;E), \;
   f\in \dot{\cC}^{\infty}(X;F), 
\label{adj.1}\end{equation}
where the $L^{2}$-inner products are defined using the density $\nu_{\cF}$ and the Hermitian metrics $h_{E}$ and $h_{F}$:
\begin{equation}
\begin{gathered}
\langle e_{1}, e_{2}\rangle_{L^{2}}= \int_{X} h_{E}(e_{1}, e_{2}) \nu_{\cF}, \quad e_{1},e_{2}\in 
\dot{\cC}^{\infty}(X;E),   \\
\langle f_{1}, f_{2}\rangle_{L^{2}}= \int_{X} h_{F}(f_{1}, f_{2}) \nu_{\cF}, \quad f_{1}, f_{2}\in 
\dot{\cC}^{\infty}(X;F).  \end{gathered}
\label{adj.2}\end{equation}
In a similar way, taking $\nu_{\Phi}= q_{c}^{*}(\left. \nu_{\cF}\right|_{M})$ as a choice of $\Gamma$-invariant $\Phi$-density on $\widetilde{M}$ and taking $h_{\widetilde{E}}= q_{c}^{*}( \left. h_{E}\right|_{M})$, $h_{\widetilde{F}}= q_{c}^{*}( \left. h_{F}\right|_{M})$ to be our choices of $\Gamma$-invariant 
Hermitian metrics for the $\Gamma$-equivariant vector bundles $\widetilde{E}= q_{c}^{*}(\left. E\right|_{M})$ and $\widetilde{F}= q_{c}^{*}(\left. F\right|_{M})$, we can define the formal adjoint
$Q^{*}$ of a $\Gamma$-invariant $\Phi$-operator $Q\in \Psi^{m}_{\Phi,\Gamma}(\widetilde{M};\tE,\tF)$ by 
\begin{equation}
\langle Q^{*}\widetilde{f}, \widetilde{e}\rangle_{L^{2}} = \langle \widetilde{f}, Q\widetilde{e}\rangle_{L^{2}}, \quad \widetilde{e}\in\dot{\cC}_{c}^{\infty}(\tM;\tE), \;
   \widetilde{f}\in \dot{\cC}_{c}^{\infty}(\tM;\tF). 
 \label{adj.3}\end{equation}
Here, $\dot{\cC}^{\infty}_{c}(\tM;\tE)$ denotes the space of compactly supported smooth sections of $\tE$ vanishing to all order at $\pa\tM$.
By looking at the definition of $Q$ in terms of its Schwartz kernel and using the fact 
$\nu_{\Phi}$, $h_{\tE}$, $h_{\tF}$ are $\Gamma$-invariant, we see that 
$Q^{*}\in \Psi^{m}_{\Phi,\Gamma}(\widetilde{M};\tF,\tE)$.  
 
 \begin{proposition}
 The formal adjoint of an $\cF$-operator   $P\in \Psi^{m}_{\cF}(X;E,F)$ (defined as in \eqref{adj.1} with respect to a choice of $\cF$-density $\nu_{\cF}$ and Hermitian metrics on $E$ and $F$) is an element of $\Psi^{m}_{\cF}(X;F,E)$. In fact, if 
 \[
      P= \pi_{c}\circ R_{q_{c}}(P_{1}) +P_{2}
 \] 
 with $P_{1}\in \Psi^{m}_{\Phi,\Gamma}(\widetilde{M};\tE,\tF)$ and $P_{2}=\dot{\Psi}^{m}(X;E,F)$,
 then
 \[
   P^{*}= \pi_{c}\circ R_{q_{c}}(P_{1}^{*})+P_{2}^{*}
 \]
 where $P_{1}^{*}\in \Psi^{m}_{\Phi,\Gamma}(\tM;\tF,\tE)$ is defined as in \eqref{adj.3} and 
 $P_{2}^{*}\in \dot{\Psi}^{m}(X;F,E)$.  
 \label{adj.4}\end{proposition}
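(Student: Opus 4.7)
The plan is to split the analysis according to the decomposition $P = \pi_c \circ R_{q_c}(P_1) + P_2$ and treat each summand separately. The summand $P_2 \in \dot{\Psi}^m(X;E,F)$ is easy: its Schwartz kernel vanishes to infinite order at $\pa X^2_b$ and is supported off the boundary, so the formal adjoint is obtained by the standard pointwise operation on the kernel (interchanging factors and conjugating via $h_E,h_F$) and produces $P_2^* \in \dot{\Psi}^m(X;F,E)$ directly.

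For the main summand, after unwinding the collar $c$ it suffices to establish
\[
\langle R_{q_c}(P_1) e, f\rangle_{L^2(M,\nu_{\cF})} = \langle e, R_{q_c}(P_1^*) f\rangle_{L^2(M,\nu_{\cF})}
\]
for $e \in \dot{\cC}^\infty(M;E)$ and $f \in \dot{\cC}^\infty(M;F)$, where $P_1^* \in \Psi^m_{\Phi,\Gamma}(\tM;\tF,\tE)$ is the $\Gamma$-invariant $\Phi$-adjoint of \eqref{adj.3}. The bridge between $M$ and $\tM$ is a partition of unity $\varphi \in \CI_c(\tM)$ relative to $\Gamma$: combining $\sum_\gamma \gamma^*\varphi = 1$ with $\nu_\Phi = q_c^*\nu_{\cF}$ yields
\[
\langle a, b\rangle_{L^2(M)} = \int_{\tM} \varphi \, h_{\tF}(q_c^*a, q_c^*b) \, \nu_\Phi
\]
whenever $a,b$ are pulled back from $M$. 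Using the alternate description $R_{q_c}(P_1) e = (q_c)_* P_1(\varphi q_c^* e)$ and the standard pushforward--pullback duality $\int_M (q_c)_* \widetilde{u} \cdot v\, \nu_{\cF} = \int_{\tM} \widetilde{u} \cdot q_c^* v\, \nu_\Phi$ for $\widetilde{u}$ compactly supported in $\tM$, I rewrite the left-hand side as $\int_{\tM} h_{\tF}(P_1(\varphi q_c^*e), q_c^*f)\, \nu_\Phi$.

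The key technical point is that $P_1(\varphi q_c^*e)$ is itself compactly supported in $\tM$: since $\Gamma$ acts properly discontinuously on $\tM$ and condition (iii) of Definition~\ref{tfb.13} forces the Schwartz kernel of $P_1$ to be compactly supported on the quotient $\tM^2_\Phi/\Gamma$, only finitely many $\Gamma$-translates of the kernel support meet $\supp\varphi$. This compact support legitimizes the adjoint pairing against the (non-compactly supported) $q_c^* f \in \dot{\cC}^\infty(\tM;\tF)$, so \eqref{adj.3} applies and produces $\int_{\tM} h_{\tE}(\varphi q_c^*e, P_1^* q_c^*f)\, \nu_\Phi$. By $\Gamma$-invariance, $P_1^* q_c^* f = q_c^* R_{q_c}(P_1^*) f$; running the partition-of-unity identity in reverse delivers $\langle e, R_{q_c}(P_1^*) f\rangle_{L^2(M)}$, concluding the proof. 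The principal obstacle throughout this plan is the careful justification of the compact support of $P_1(\varphi q_c^* e)$ and the resulting integrability needed to move $P_1$ across the pairing; once these are in hand, the rest is bookkeeping.
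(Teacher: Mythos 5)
Your argument is correct and follows essentially the same route as the paper: dispose of $\dot{\Psi}^m(X;E,F)$ separately, then use a partition of unity $\varphi$ relative to $\Gamma$ to transfer the $L^2$ pairing on $M$ to an integral over $\tM$ against $\varphi\nu_\Phi$, arrive at $\int_{\tM}h_{\tF}(P_{1}(\varphi q_c^*e),q_c^*f)\,\nu_\Phi$, and apply the $\Gamma$-invariant adjoint of \eqref{adj.3}. The only (harmless) difference is bookkeeping: you reach that intermediate integral via the pushforward--pullback adjunction for $(q_c)_*$, whereas the paper starts from $\int_{\tM}\varphi\,h_{\tF}(\tf,P_1\tilde e)\,\nu_\Phi$ and moves $\varphi$ across $P_1$ by summing over $\Gamma$ and using equivariance --- and your explicit justification of the compact support of $P_1(\varphi q_c^*e)$ via condition (iii) of Definition~\ref{tfb.13} is exactly the point the paper relies on implicitly.
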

\begin{proof}
Clearly, the formal adjoint $P_{2}^{*}$ of an operator $P_{2}\in \dot{\Psi}^{m}(X;E,F)$ is an element of $\dot{\Psi}^{m}(X;F,E)$. Thus, to prove the proposition, we may assume that $P$ is of the form
\[
     P= \pi_{c}\circ R_{q_{c}}(P_{1})
\]  
with $P_{1}\in \Psi^{m}_{\Phi,\Gamma}(\tM;\tE,\tF)$.  This means we can think of $P$ as an element of $\Psi_{\cF}^{m}(M;E,F)$.  If $\pa\varphi\in \CI_{c}(\pa \widetilde{M})$ is a choice of partition of unity relative to $\Gamma$ and $\varphi\in \CI(\widetilde{M})$ is its pull-back to $\widetilde{M}$, then the $L^{2}$-inner products of \eqref{adj.2} restricted to $M$ can be rewritten as follows
\begin{equation}
\begin{gathered}
\langle e_{1}, e_{2}\rangle_{L^{2}}= \int_{\tM} h_{\tE}(q_{c}^{*}e_{1}, q_{c}^{*}e_{2}) \varphi\nu_{\Phi}, \quad e_{1},e_{2}\in 
\dot{\cC}^{\infty}(M;E),   \\
\langle f_{1}, f_{2}\rangle_{L^{2}}= \int_{\tM} h_{\tF}(q^{*}_{c}f_{1}, q^{*}_{c}f_{2}) \varphi\nu_{\Phi}, \quad f_{1}, f_{2}\in 
\dot{\cC}^{\infty}(M;F).  \end{gathered}
\label{adj.5}\end{equation}
Since $P_{1}$ is $\Gamma$-invariant and $\varphi$ is a partition of unity relative to $\Gamma$, 
we have for $e\in \dot{\cC}^{\infty}(M;E)$, $f\in \dot{\cC}^{\infty}(M;F)$ with $\widetilde{e}= q^{*}_{c}e$ and $\widetilde{f}=q^{*}_{c}f$, 
\begin{equation}
\begin{aligned}
\langle f, P e\rangle_{L^{2}} &=   \int_{\tM} h_{\tF}(\widetilde{f}, P_{1} \widetilde{e}) \varphi \nu_{\Phi}  = \int_{\tM} h_{\tF}(\widetilde{f}, \varphi P_{1}\widetilde{e})\nu_{\Phi}  \\
      &=  \sum_{\gamma\in\Gamma} \int_{\tM} h_{\tF}(\widetilde{f}, \varphi P_{1}(\gamma^{*}\varphi)\widetilde{e})\nu_{\Phi} =   \sum_{\gamma\in\Gamma} \int_{\tM} h_{\tF}(\widetilde{f}, (\gamma^{*}\varphi) P_{1}\varphi\widetilde{e})\nu_{\Phi}   \\
          &=  \int_{\tM} h_{\tF}(\widetilde{f}, P_{1}\varphi\widetilde{e})\nu_{\Phi}  
      = \int_{\tM} h_{\tF}(P_{1}^{*} \widetilde{f}, \varphi\widetilde{e})\nu_{\Phi}  \\  
      &= \langle R_{q_{c}}(P_{1}^{*})f, e\rangle_{L^{2}},    
\end{aligned}
\label{adj.6}\end{equation}
that is, $P^{*}= \pi_{c}\circ R_{q_{c}}(P^{*}_{1})$.

\end{proof}

\section{Symbol maps} \label{symb.0}

Recall from \cite{Mazzeo-Melrose} that the diagonal $\widetilde{\Delta}_{\Phi} \subset \widetilde{M}^{2}_{\Phi}$ is naturally diffeomorphic to $\widetilde{M}$ with conormal bundle naturally diffeomorphic to ${}^{\Phi}T^{*}\widetilde{M}$.  Since the action of $\Gamma$ on $M$ is smooth and induces  corresponding 
actions on ${}^{\Phi}T\widetilde{M}$ and ${}^{\Phi}T^{*}\widetilde{M}$
with canonical identifications ${}^{\Phi}T\widetilde{M}/\Gamma= {}^{\cF}TM$ and
${}^{\Phi}T^{*}\widetilde{M}/\Gamma= {}^{\cF}T^{*}M$, we see that 
the diagonal $\Delta_{\Phi}= \widetilde{\Delta}_{\Phi}/\Gamma \subset
\widetilde{M}^{2}_{\Phi}/\Gamma$ is naturally diffeomorphic to 
$M$ with conormal bundle naturally diffeomorphic to
${}^{\cF}T^{*}M$.  This means we can proceed as in \cite{Mazzeo-Melrose} to define
a principal symbol map
\begin{equation}
  \sigma_{m}: \Psi^{m}_{\cF}(X) \to S^{[m]}( {}^{\cF}T^{*}X)
\label{symb.1}\end{equation}
where $S^{[m]}({}^{\cF}T^{*}X)= S^{m}({}^{\cF}T^{*}X)/ S^{m-1}({}^{\cF}T^{*}X)$ and $S^{m}({}^{\cF}T^{*}X)$ is the usual space of smooth functions 
$f\in \CI({}^{\cF}T^{*}X)$ such that in a local trivialization 
$\left.{}^{\cF}T^{*}X\right|_{\cU}\cong \cU\times \bbR^{n}_{\xi}$ with local
coordinates $u$ on $\cU$, we have
\[
    \sup_{u,\xi} \frac{ |D^{\alpha}_{u}D^{\beta}_{\xi}f |}{ (1+ |\xi|^{2})^{\frac{m-|\beta|}{2}} } <\infty
    \;\; \forall \alpha,\beta\in \bbN_{0}^{n}.
\]
For polyhomogeneous pseudodifferential operators, the principal symbol
becomes a homogeneous function so that we have the symbol map
\begin{equation}
   \sigma_{m}: \Psi^{m}_{\cF-\ph}(X)\to  \CI(S({}^{\cF}T^{*}X), \Lambda^{m}),
\label{symb.2}\end{equation} 
where $\Lambda$ is the dual of the tautological real line bundle of 
$S({}^{\cF}T^{*}X)$, the sphere bundle of ${}^{\cF}T^{*}X$.

As usual, we can also make our pseudodifferential operators act from
smooth sections of a (complex) vector bundle $E$ to smooth sections of 
another vector bundle $F$ and define the corresponding principal symbol.
This gives a short exact sequence
\begin{equation}
\xymatrix @C=1.5pc{ 0 \ar[r] & \Psi^{m-1}_{\cF}(X;E,F) \ar[r] &
\Psi^{m}_{\cF}(X;E,F) \ar[r]^-{\sigma_{m}} & S^{[m]}({}^{\cF}T^{*}X;
\phi^{*}\hom(E,F))\ar[r] & 0
}  
\label{symb.2b}\end{equation}
where $\phi: {}^{\cF}T^{*}X\to X$ is the bundle projection.

As for fibred cusp operators, there is also a `new' symbol coming from the
boundary.  To describe it, we will first study the corresponding symbol for
the space $\Psi_{\Phi,\Gamma}^{m}(\widetilde{M})$.  In this case, the \textbf{normal operator}
of an operator $P\in \Psi^{m}_{\Phi,\Gamma}(\widetilde{M};E,F)$ is defined to be the 
restriction of its Schwartz kernel $K_{P}\in I^{m}(\widetilde{M}_{\Phi}^{2},
\widetilde{\Delta}_{\Phi}; {}^{\Phi}\Omega_{R}'\otimes \Hom(E,F))$ to the 
front face $\ff_{\Phi}$,
\begin{equation}
  N_{\Phi}(P)= \left. K_{P} \right|_{\ff_{\Phi}}.
\label{symb.3}\end{equation}
\begin{remark}
Since the function $\frac{x}{x'}$ pulls back to a function equal to $1$ on $\ff_{\Phi}$, we see that 
\[
        N_{\Phi}( x^{\ell}\circ P \circ x^{-\ell}) = N_{\Phi}(P) \quad \forall \ell \in \bbR.
\]
\label{conjugation.2}\end{remark}

As described in \cite{Mazzeo-Melrose}, this can be interpreted as
the Schwartz kernel of a ${}^{\Phi}N\pa\widetilde{M}$-suspended operator, where ${}^{\Phi}N\pa\tM\to \pa \tM$ is the kernel bundle of the natural map (\cf \eqref{mfb.3}) 
\[
\rho: \left. {}^{\Phi}T\tM\right|_{\pa\tM}\to \left. T\tM\right|_{\pa\tM}.
\] To see
this, recall that the bundle ${}^{\Phi}N\pa \widetilde{M}$ can naturally be seen
as the pull-back of a bundle on $Y$ that we will conveniently denote 
$NY\to Y$.  Indeed, we have a natural decomposition 
\[
       {}^{\Phi}N\pa\widetilde{M}= T(\pa\widetilde{M})/T(\pa\widetilde{M}/Y) \times \bbR,
\]
and from the short exact sequence
\[
\xymatrix{
0 \ar[r] & T(\pa\widetilde{M}/Y) \ar[r] &T(\pa\widetilde{M}) \ar[r]^{\Phi_{*}} & \Phi^{*}TY \ar[r] & 0, 
}
\]
we have a canonical identification $\Phi^{*}TY= T(\pa\widetilde{M})/T(\pa\widetilde{M}/Y)$.  Thus, ${}^{\Phi}N\pa \widetilde{M}\cong \Phi^{*}NY$ with $NY=TY\times \bbR$.  There is a 
corresponding fibration structure 
\begin{equation}
\xymatrix{
  \widetilde{Z}\ar@{-}[r] &  {}^{\Phi}N\pa \widetilde{M} \ar[d]^{\Phi_{*}}\\
    & NY .
}  
\label{fou.1}\end{equation}
The interior of the front face $\ff_{\Phi}$ is naturally identified with
${}^{\Phi}N^{*}\pa \widetilde{M}\times_{\Phi_{*}}{}^{\Phi}N^{*}\pa \widetilde{M}$.  Under
this identification, $\left. K_{P}\right|_{\ff_{\Phi}}$ can be seen as the
Schwartz kernel of a ${}^{\Phi}N\pa \widetilde{M}$-suspended operator via the action
\begin{equation}
   N_{\Phi}(P)f= (\pr_{1})_{*}( \pi^{*}(\left. K_{P}\right|_{\ff_{\Phi}})\cdot
      \pr_{2}^{*}f), \quad f\in \CI_{c}({}^{\Phi}N\pa \widetilde{M};E)
\label{fou.2}\end{equation}
where $\pr_{i}: {}^{\Phi}N\pa \widetilde{M}\times_{Y}{}^{\Phi}N\pa \widetilde{M}\to{}^{\Phi}N\pa \widetilde{M} $ is the projection on the $i$th factor and 
\begin{equation}
  \pi: {}^{\Phi}N\pa \widetilde{M}\times_{Y} {}^{\Phi}N\pa \widetilde{M}\to 
   {}^{\Phi}N\pa \widetilde{M}\times_{\Phi_{*}} {}^{\Phi}N\pa \widetilde{M}
\label{fou.3}\end{equation}
is the projection which to $(z,z',\nu,\nu')\in \pa\widetilde{M}\times_{Y}\pa\widetilde{M}\times_{Y} NY\times_{Y} NY
\cong {}^{\Phi}N\pa \widetilde{M}\times_{Y} {}^{\Phi}N\pa \widetilde{M}$ associates
$(z,z',\nu-\nu')\in \pa\widetilde{M}\times_{Y} \pa\widetilde{M}\times_{Y} NY\cong {}^{\Phi}N\pa \widetilde{M}\times_{\Phi_{*}} {}^{\Phi}N\pa \widetilde{M}$.

The normal operator $N_{\Phi}(P)$ is therefore a family of pseudodifferential operators parametrized by the base $Y$ of the fibration 
\begin{equation}
\xymatrix{
  N_{y}\pa\widetilde{M}\ar@{-}[r] &  {}^{\Phi}N\pa \widetilde{M} \ar[d]\\
    & Y.
}   
\label{fou.4}\end{equation}
Furthermore, for a given $y\in Y$, the corresponding operator $N_{\Phi}(P)(y)$ is
translation invariant with respect to the $N_{y}Y$-action given by the 
decomposition ${}^{\Phi}N_{y}\pa \widetilde{M}= \widetilde{Z}_{y}\times N_{y}Y$
where $\widetilde{Z}_{y}= \Phi^{-1}(y)$.  The 
Schwartz kernel $K_{N_{\Phi}(P)(y)}$ is a conormal distribution in
$I^{m}(\widetilde{Z}_{y}\times\widetilde{Z}_{y}\times N_{y}Y; \Delta_{\widetilde{Z}_{y}}\times \{0\};
\Hom(E,F)\otimes \Omega_{R}(\widetilde{Z}_{y})\otimes \Omega(N_{y}Y))$ acting on
$f$ by 
\begin{equation}
  N_{\Phi}(P)(y)f(z, \nu)= \int_{\widetilde{Z}_{y}\times N_{y}Y} K_{N_{\Phi}(P)(y)}(z,z',\nu-\nu')f(z',\nu'),  
\label{fou.5}\end{equation}
where the integration is performed in the variables $z'$ and $\nu'$, the 
density factor being included in the Schwartz kernel.  Given $\eta\in
N^{*}_{y}Y$, we can therefore consider the Fourier transform in $N_{y}Y$,
\begin{equation}
  K_{\widehat{N}_{\Phi}(P)(y)}(z,z',\eta)= \int_{N_{y}Y} e^{-i\eta(\nu)}
  K_{N_{\Phi}(P)}(z,z',\nu),
\label{fou.6}\end{equation}
which is the Schwartz kernel of a family of operators $\widehat{N}_{\Phi}(P)(y)$
parametrized by the base of the fibration $ {}^{\Phi}N_{y}^{*}\pa\widetilde{M} \to N^{*}_{y}Y$.  Doing
this for each $y\in Y$, we get a family of operators $\widehat{N}_{\Phi}(P)$ associated
to the fibration 
\begin{equation}
\xymatrix{
  \widetilde{Z}\ar@{-}[r] &  {}^{\Phi}N^{*}\pa \widetilde{M} \ar[d]\\
    & N^{*}Y\cong T^{*}Y\times \bbR.
}   
\label{symb.6}\end{equation} 
The family $\widehat{N}_{\Phi}(P)$ is the Fourier transform of the normal operator $N_{\Phi}(P)$.  
For $Q_{1}\in \Psi^{m}_{\sus-\Phi}(\pa \widetilde{M};E,F)$ and
$Q_{2}\in \Psi^{m'}_{\sus-\Phi}(\pa \widetilde{M};F,G)$, it is such
that 
\begin{equation}
   \widehat{Q_{2}\circ Q_{1}}= \widehat{Q}_{2}\circ \widehat{Q}_{1}.
\label{fou.7}\end{equation}
The normal operator $N_{\Phi}(P)$ and its Fourier transform $\widehat{N}_{\Phi}(P)$ contain
the same information.

Since $\Gamma$ acts smoothly on $\widetilde{M}$ and $Y$, there are induced actions of $\Gamma$ on $NY$ and ${}^{\Phi}N\pa \widetilde{M}$.    These actions are compatible in the sense that the projection $\Phi_{*}: {}^{\Phi}N\pa\widetilde{M}\to NY$ is equivariant with respect to these actions.
Let $\psi: \Gamma \to \Diff({}^{\Phi}N\pa \widetilde{M})$
denote  the action on ${}^{\Phi}N\pa\widetilde{M}$ and let $\widehat{\psi}: \Gamma\to \Diff({}^{\Phi}N^{*}\pa\widetilde{M})$
denote the corresponding dual action on the vector bundle
${}^{\Phi}N^{*}\pa\widetilde{M}\to \pa \widetilde{M}$.  Clearly, the quotient
of ${}^{\Phi}N\pa \widetilde{M}$ by this action is ${}^{\cF}N^{}\pa X$, while the quotient of ${}^{\Phi}N^{*}\pa\widetilde{M}$ by the dual
action is ${}^{\cF}N^{*}\pa X$.

\begin{proposition}
For $P\in \Psi^{m}_{\Phi,\Gamma}(\widetilde{M};E,F)$, 
the normal operator $N_{\Phi}(P)$ and its Fourier transform $\widehat{N}_{\Phi}(P)$ are $\Gamma$-invariant with respect to the action of $\Gamma$ on ${}^{\Phi}N\pa \widetilde{M}$
and ${}^{\Phi}N^{*}\pa \widetilde{M}$,
\[
  \psi(\gamma)\circ N_{\Phi}(P)\circ \psi(\gamma^{-1})= N_{\Phi}(P),
\quad   \widehat{\psi}(\gamma)\circ \widehat{N}_{\Phi}(P)\circ \widehat{\psi}(\gamma^{-1})=
    \widehat{N}_{\Phi}(P) \quad \forall\; \gamma \in \Gamma.
\]
\label{symb.7}\end{proposition}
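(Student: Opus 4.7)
The plan is to deduce both $\Gamma$-invariance statements directly from the $\Gamma$-invariance of the Schwartz kernel $K_{P}$ on $\widetilde{M}^{2}_{\Phi}$ (\ie condition (ii) in Definition~\ref{tfb.13}) by tracking what happens at the front face. First I would note that, because the blow-up construction \eqref{tfb.11} is functorial, the diagonal $\Gamma$-action on $\widetilde{M}\times\widetilde{M}$ lifts to a smooth action on $\widetilde{M}^{2}_{\Phi}$ that preserves every boundary hypersurface; in particular it preserves $\ff_{\Phi}$. Hence $K_{P}$ being $\Gamma$-invariant implies that its restriction $N_{\Phi}(P)=\left.K_{P}\right|_{\ff_{\Phi}}$ is invariant under the restricted action of $\Gamma$ on $\ff_{\Phi}$.

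Next I would identify this restricted action. Under the canonical diffeomorphism of $\inte{\ff_{\Phi}}$ with ${}^{\Phi}N\pa\widetilde{M}\times_{\Phi_{*}}{}^{\Phi}N\pa\widetilde{M}$ (\cf \eqref{fou.2}--\eqref{fou.3}), the lifted action of $\gamma\in\Gamma$ is precisely the diagonal action induced by $\psi(\gamma)$ on each factor: this follows from the fact that $\Gamma$ acts on $\pa\widetilde{M}$ by diffeomorphisms covering its action on $Y$, so that the induced action on ${}^{\Phi}N\pa\widetilde{M}=T(\pa\widetilde{M})/T(\pa\widetilde{M}/Y)\times\bbR$ is exactly $\psi$. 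Inserting this into the definition \eqref{fou.2} of $N_{\Phi}(P)$, the $\Gamma$-invariance of $\left.K_{P}\right|_{\ff_{\Phi}}$ becomes
\[
  \psi(\gamma)\circ N_{\Phi}(P)\circ \psi(\gamma^{-1})= N_{\Phi}(P)\quad \forall\gamma\in\Gamma,
\]
which is the first claim.

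For the Fourier transform I would use the fact that the fibrewise action of $\Gamma$ on $NY\to Y$ is linear on each fibre (being the differential of a diffeomorphism of $Y$, together with the trivial action on the $\bbR$-factor). The fibrewise Fourier transform \eqref{fou.6} then intertwines this linear action with the corresponding dual action $\widehat{\psi}$ on $N^{*}Y$, since for any linear isomorphism $A\colon N_{y}Y\to N_{\gamma y}Y$ one has $\mathcal{F}\circ A_{*}=|\det A|^{-1}(A^{-1})^{*}\circ\mathcal{F}$, and this is precisely $\widehat{\psi}(\gamma)_{*}\circ\mathcal{F}$ on the quotient of translation-invariant kernels where the determinant factor is absorbed in the density. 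Applying the Fourier transform to the identity above therefore yields
\[
  \widehat{\psi}(\gamma)\circ \widehat{N}_{\Phi}(P)\circ \widehat{\psi}(\gamma^{-1})= \widehat{N}_{\Phi}(P)\quad \forall\gamma\in\Gamma,
\]
completing the proof.

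The main obstacle I anticipate is the bookkeeping in the second step: verifying that the diffeomorphism between $\inte{\ff_{\Phi}}$ and ${}^{\Phi}N\pa\widetilde{M}\times_{\Phi_{*}}{}^{\Phi}N\pa\widetilde{M}$ is indeed $\Gamma$-equivariant, with the action on the right being the diagonal of $\psi$. This amounts to checking that the construction of the front face through polar coordinates \eqref{tfb.5} and the subsequent blow-up of $\tilde{D}_{\Phi}$ commute with the $\Gamma$-action, which is automatic because these blow-ups are performed at $\Gamma$-invariant submanifolds, but needs to be stated carefully to justify the identification of the restricted action with the diagonal of $\psi$ on the suspension picture.
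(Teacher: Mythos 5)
Your proposal is correct and follows essentially the same route as the paper: restrict the $\Gamma$-invariant Schwartz kernel to the front face, identify the induced action on $\ff_{\Phi}\setminus\pa\ff_{\Phi}\cong{}^{\Phi}N\pa\widetilde{M}\times_{\Phi_{*}}{}^{\Phi}N\pa\widetilde{M}$ with $\psi\times_{\Phi_{*}}\psi$, and then pass to $\widehat{N}_{\Phi}(P)$ via the fibrewise Fourier transform on $NY\to Y$. The equivariance bookkeeping you flag as a potential obstacle is handled in the paper exactly as you anticipate, by observing that the blow-ups are performed at $\Gamma$-invariant submanifolds.
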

\begin{proof}
By condition (ii) in Definition~\ref{tfb.13}, we know that the Schwartz kernel
of $P$ is $\Gamma$-invariant with respect to the action 
of $\Gamma$ on $\widetilde{M}^{2}_{\Phi}$.  In particular, since this action
preserves the front face $\ff_{\Phi}$, we see that $N_{\Phi}(P)= \left.K_{P}
\right|_{\ff_{\Phi}}$ is also $\Gamma$-invariant with respect to the
action of $\Gamma$ on $\ff_{\Phi}$.  Under the identification
\begin{equation}
  \ff_{\Phi}\setminus\pa\ff_{\Phi}= {}^{\Phi}N\pa \widetilde{M}\times_{\Phi_{*}} {}^{\Phi}N\pa\widetilde{M},
\label{symb.8}\end{equation}
this action corresponds the action $\psi\times_{\Phi_{*}}\psi$.  Thus, $N_{\Phi}(P)$ is 
$\Gamma$-invariant.    Taking
the Fourier transform in the fibres of $NY\to Y$ then gives the
the corresponding result for $\widehat{N}_{\Phi}(P)$.
\end{proof}
Because of the $\Gamma$-invariance of $\widehat{N}_{\Phi}(P)$, we can make it act on the space
of Schwartz functions on ${}^{\cF}N^{*}\pa X$.  Indeed, let
$q_{\widehat{\psi}}: {}^{\Phi}N^{*}\pa \widetilde{M}\to {}^{\cF}N^{*}\pa X$ denote
the quotient map.  Given $f\in \cS({}^{\cF}N^{*}\pa X)$, we can consider the
$\Gamma$-invariant function $q_{\widehat{\psi}}^{*}f$ on ${}^{\Phi}N^{*}\pa\widetilde{M}$.
Since $\widehat{N}_{\Phi}(P)$ is $\Gamma$-invariant, $\widehat{N}_{\Phi}(P)(q_{\widehat{\psi}}^{*}f)$ is also $\Gamma$-invariant and we can find a unique function
$g\in \CI({}^{\cF}N^{*}\pa X)$ such that 
$\widehat{N}_{\Phi}(P)(q_{\widehat{\psi}}^{*}f)= q_{\widehat{\psi}}^{*}g$.  We define 
the action of $\widehat{N}_{\Phi}(P)$ on $f$ to be this function $g$. 
Similarly, using the quotient map $q_{\psi}:{}^{\Phi}N\pa\tM\to {}^{\cF}N\pa X$, we can make $N_{\Phi}(P)$ act on $\cS({}^{\cF}N\pa X)$.

\begin{definition}
The map 
\[r_{\psi}: N_{\Phi}(\Psi^{\infty}_{\Phi,\Gamma}(\widetilde{M};E,F))\to \Hom(\cS({}^{\cF}N\pa X;E),\cS({}^{\cF}N\pa X;F))
\]
 is defined by requiring that 
\[
     q^{*}_{\psi}( r_{\psi}(N_{\Phi}(P))f)=
N_{\Phi}(P) q^{*}_{\psi}f \quad \forall \; P\in \Psi^{\infty}_{\Phi,\Gamma}(\widetilde{M};E,F), \; \forall \; f\in \cS({}^{\cF}N\pa X;E).
\]
Similarly, the map $r_{\widehat{\psi}}: \widehat{N}_{\Phi}(\Psi^{\infty}_{\Phi,\Gamma}(\widetilde{M};E,F))\to \Hom(\cS({}^{\cF}N^{*}\pa X;E),\cS({}^{\cF}N^{*}\pa X;F))$ is defined by requiring that 
\[
     q^{*}_{\widehat{\psi}}( r_{\widehat{\psi}}(\widehat{N}_{\Phi}(P))f)=
\widehat{N}_{\Phi}(P) q^{*}_{\widehat{\psi}}f \quad \forall \; P\in \Psi^{\infty}_{\Phi,\Gamma}(\widetilde{M};E,F), \; \forall \; f\in \cS({}^{\cF}N^{*}\pa X;E).
\]  
\label{symb.10}\end{definition}
If we choose a metric for the vector bundle ${}^{\cF}N\pa X\to \pa X$, for instance the one induced by a choice of $\cF$-metric, then we have in particular a fibrewise volume density on ${}^{\cF}N\pa X$.  This can be used to define a fibrewise Fourier transform:
\begin{equation}
  \mathfrak{F}_{G}: \cS({}^{\cF}N\pa X;G)\to \cS({}^{\cF}N^{*}\pa X;G)
\label{fourier.1}\end{equation}
for $G$ a complex vector bundle over $\pa X$.  This fibrewise Fourier transform relates $r_{\psi}$ and $r_{\widehat{\psi}}$ in the expected way:
\begin{equation}
  r_{\widehat{\psi}}(\widehat{N}_{\Phi}(P))= \mathfrak{F}_{F}\circ 
  \left( r_{\psi}(N_{\Phi}(P) )  \right)\circ \mathfrak{F}_{E}^{-1}, \quad 
  P\in \Psi^{m}_{\Phi,\Gamma}(\widetilde{M};E,F).
\label{fourier.2}\end{equation}
\begin{lemma}
The maps $r_{\psi}$ and $r_{\widehat{\psi}}$ are injective.  In particular, when $E=F$, this means they induce faithful representations.  
\label{faith.1}\end{lemma}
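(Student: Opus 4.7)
By the intertwining identity \eqref{fourier.2} between $r_\psi$ and $r_{\widehat{\psi}}$, together with the fact that the fibrewise Fourier transforms $\mathfrak{F}_E,\mathfrak{F}_F$ are isomorphisms and that $N_\Phi(P)$ determines $\widehat{N}_\Phi(P)$ and conversely, it suffices to prove that $r_\psi$ is injective. The plan is to argue at the level of Schwartz kernels and to exploit the freeness of the $\Gamma$-action on $\pa\tM$.

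Although $\Gamma$ is only assumed to act locally freely on $Y$, Assumption~\ref{tfb.2b}(i) forces its action on $\pa\tM=\pa\tX$ to be free and properly discontinuous. This implies that the induced action $\psi$ on ${}^{\Phi}N\pa\tM$, and the diagonal action on ${}^{\Phi}N\pa\tM\times_{\Phi_*}{}^{\Phi}N\pa\tM$, are also free and properly discontinuous, since any stabiliser would project to a stabiliser on the leftmost $\pa\tM$-factor. Consequently the quotient map
\[
 q_\psi\times_{\Phi_*}q_\psi:{}^{\Phi}N\pa\tM\times_{\Phi_*}{}^{\Phi}N\pa\tM\longrightarrow \bigl({}^{\Phi}N\pa\tM\times_{\Phi_*}{}^{\Phi}N\pa\tM\bigr)/\Gamma
\]
is a regular Galois $\Gamma$-cover of a manifold, and this quotient is precisely the space on which the Schwartz kernel of the suspended operator $r_\psi(N_\Phi(P))$ naturally lives.

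Next I would invoke the standard fact that for a regular Galois $\Gamma$-cover $p$, pullback $p^{*}$ is a bijection between distributions on the base and $\Gamma$-invariant distributions upstairs. Since $p$ is locally a diffeomorphism, this bijection respects conormality along a descending submanifold; moreover the $\Gamma$-action permutes the fibres of $NY\to Y$ by linear isomorphisms, so it also respects Schwartz decay in those fibre directions. Applied to our setting, this identifies $\Gamma$-invariant conormal Schwartz kernels on ${}^{\Phi}N\pa\tM\times_{\Phi_*}{}^{\Phi}N\pa\tM$ with the corresponding kernels on the quotient. By Proposition~\ref{symb.7}, $K_{N_\Phi(P)}$ is $\Gamma$-invariant, and from the defining property of $r_\psi$ in Definition~\ref{symb.10} its unique descent is the Schwartz kernel of $r_\psi(N_\Phi(P))$.

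If $r_\psi(N_\Phi(P))=0$, then the descended kernel vanishes, and by injectivity of the descent the upstairs kernel $K_{N_\Phi(P)}$ vanishes as well, giving $N_\Phi(P)=0$. This proves $r_\psi$ is injective, and then injectivity of $r_{\widehat{\psi}}$ follows from \eqref{fourier.2}. The main technical point I expect to require care is making the descent precise at the required level of regularity, namely for conormal distributions that are Schwartz in the normal fibre direction, but once freeness and proper discontinuity of the $\Gamma$-action on the fibre product are in hand, this is the standard descent principle for distributions on a Galois cover.
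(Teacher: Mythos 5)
There is a genuine gap at the heart of your argument, namely the sentence identifying the quotient $\bigl({}^{\Phi}N\pa\widetilde{M}\times_{\Phi_*}{}^{\Phi}N\pa\widetilde{M}\bigr)/\Gamma$ with ``the space on which the Schwartz kernel of $r_\psi(N_\Phi(P))$ naturally lives'' and the ensuing claim that Definition~\ref{symb.10} exhibits $r_\psi(N_\Phi(P))$ as the descent of $K_{N_\Phi(P)}$. That diagonal quotient is the holonomy groupoid of the induced foliation, not the graph of the leaf equivalence relation in ${}^{\cF}N\pa X\times{}^{\cF}N\pa X$. Unwinding Definition~\ref{symb.10}, the value of $r_\psi(N_\Phi(P))f$ at $p=[\tilde p]$, $y=\Phi(\tilde p)$, is $\int_{\Phi_*^{-1}(y)}K_{N_\Phi(P)}(\tilde p,\tilde q')\,f([\tilde q'])$, and rewriting this as an integral over the leaf introduces the fibrewise sum $\sum_{\gamma\in\Gamma_y}K_{N_\Phi(P)}(\tilde p,\tilde q\gamma)$ over the isotropy group of $y$. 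So the operator downstairs is obtained from the descended kernel by a further \emph{pushforward}, not by the (indeed bijective) descent of invariant distributions, and this pushforward can vanish on a nonzero kernel. Your argument proves only the tautology that a nonzero $\Gamma$-invariant kernel has nonzero descent to the holonomy groupoid; it does not show the resulting operator on $\cS({}^{\cF}N\pa X;E)$ is nonzero.

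Even where the isotropy is trivial there is a second obstruction you do not address: the operator is only tested against restrictions to leaves of \emph{globally} smooth sections. A leaf may meet a foliation chart $\cU_i$ in many plaques (infinitely many for the Kronecker foliation of Example~\ref{kr.1}), and a smooth function on ${}^{\cF}N\pa X$ cannot be prescribed independently on each plaque, so a nonzero descended kernel could a priori pair to zero with every admissible test function by cancellation between plaques. The paper's proof confronts exactly these two issues: it uses the compact support condition (iii) of Definition~\ref{tfb.13} to reduce the pairing at a point to finitely many plaques $\nu_i^{-1}(b_1),\dots,\nu_i^{-1}(b_k)$, chooses the base point $y_0$ in the dense set where the $\Gamma$-action on $Y$ is free (Remark~\ref{lf.1}, which is where the locally-free hypothesis enters), and then shrinks the support of the lifted test function $\tf$ around the single fibre $\tnu_i^{-1}(y_0)$ so that the contributions of the other plaques vanish and no cancellation can occur. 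Your proposal uses neither condition (iii) nor the local freeness on $Y$, which is a sign that the difficulty has been bypassed rather than resolved.
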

\begin{proof}
Without loss of generality, we can assume $E=F=\underline{\bbC}$.  Given
$P\in \Psi^{m}_{\Phi,\Gamma}(\widetilde{M})$ such that $\widehat{N}_{\Phi}(P)\ne 0$, we need to show that $r_{\widehat{\psi}}(\widehat{N}_{\Phi}(P))\ne 0$ and $r_{\psi}(N_{\Phi}(P))\ne 0$.  By relation \eqref{fourier.2}, it suffices to show that $r_{\widehat{\psi}}(\widehat{N}_{\Phi}(P))\ne 0$.  Since $\widehat{N}_{\Phi}(P)\ne 0$, there exists $\tf\in \CI_{c}({}^{\Phi}N^{*}\pa \tM)$ such that $\widehat{N}_{\Phi}(P)\tf\ne 0$.

Since $\pa X$ is compact, we can find a finite covering $\{\cU_{i}\}_{i\in \cI}$ of $\pa X$ with $\cI$ a finite set, such that for each $i$, there is a 
diffeomorphism
\begin{equation}
   \phi_{i}: \cU_{i}\to B_{i}\times F_{i}
\label{symb.12}\end{equation}
in such a way that the leaves of $\phi_{i}(\left. \cF\right|_{\cU_{i}})$ are
precisely the fibres of the projection $\pr_{L}:B_{i}\times F_{i}\to B_{i}$
on the left factor, where $B_{i}\subset \bbR^{n-1-\ell}$ and 
$F_{i}\subset \bbR^{\ell}$  are open sets. Taking the $\cU_{i}$ smaller if needed,
we can also assume that the cover $q: \pa \widetilde{M}\to \pa X$ admits a section $s_{i}$ over $\cU_{i}$ inducing a diffeomorphism $s_{i}:\cU_{i}\to \tcU_{i}\subset \pa \tM$.  Then $\{\tcU_{i}\cdot \gamma\}_{i\in\cI, \gamma\in \Gamma}$ is a covering of $\pa \widetilde{M}$.  Let $\{\chi_{i,\gamma}\}$ be a partition of unity subordinate to this covering.  Clearly, we can find $i\in \cI$ and $\gamma\in \Gamma$ such that 
\[
      \widehat{N}_{\Phi}(P)\pi^{*}\chi_{i,\gamma} \tf \ne 0,
\]
where $\pi: {}^{\Phi}N^{*}\pa \tM\to \pa \tM$ is the bundle projection.  Thus, replacing $\tf$ with $\pi^{*}\chi_{i,\gamma}\tf$ if needed, we can assume $\tf$ is supported in $\pi^{*}(\tcU_{i}\cdot \gamma)$.  In fact, without loss of generality, we can assume $\gamma=1$ so that $\tf$ is supported in $\pi^{*}\tcU_{i}$.  Let $\tp\in {}^{\Phi}N^{*}\pa \tM$ be a point such that $\widehat{N}_{\Phi}(P)(\tf)(\tp)\ne 0$.  By Remark~\ref{lf.1}, we can choose the point $\tp$ in such a way that its image $y_{0}\in Y$ under the projection ${}^{\Phi}N\pa\tM\to Y$ is a point where the action of $\Gamma$ is free.  

Let $p$ be the image of $\tp$ under the quotient map $q_{\widehat{\psi}}: {}^{\Phi}N^{*}\pa\tM\to {}^{\cF}N^{*}\pa X$.  Let also $f\in \CI_{c}(\pi^{*}\cU_{i})$
be the pushforward of $\tf$ under the natural identification $\pi^{*}\tcU_{i}\cong \pi^{*}\cU_{i}$ given by the quotient map $q_{\widehat{\psi}}$, where $\pi$ also denotes the bundle projection of ${}^{\cF}N^{*}\pa X$.  Finally, let $\nu_{i}:\pi^{*}\cU_{i}\to B_{i}$ be the fibration induced by the sequence of maps
\[
\xymatrix{
   \pi^{*}\cU_{i} \ar[r] &  \cU_{i}\cong F_{i}\times B_{i} \ar[r]^(0.65){\pr_{R}} & B_{i}.
   }
\] 

Even if $\widehat{N}_{\Phi}(P)(\tf)(\tp)\ne 0$, it is still possible that $r_{\widehat{\psi}}(\widehat{N}_{\Phi}(P))(f)(p)=0$.  This is because the value of  $r_{\widehat{\psi}}(\widehat{N}_{\Phi}(P))(f)$ at $p$ is determined by the restriction of $f$ at the fibres of the fibration $\nu_{i}:\pi^{*}\cU_{i}\to B_{i}$ corresponding to the image of the leaf passing by $p$ of the foliation $\pi^{-1}\cF$ on ${}^{\cF}N^{*}\pa X$ with leaves given by the inverse images of the leaves of $\cF$ under the projection $\pi: {}^{\cF}N^{*}\pa X\to \pa X$.  Still, by the compactness assumption in (iii) of Definition~\ref{tfb.13}, we know at least that the value of $r_{\widehat{\psi}}(\widehat{N}_{\Phi}(P))(f)$ at $p$ is determined by the restriction of $f$ at only finitely many of these fibres, say 
$\nu_{i}^{-1}(b_{1}),\ldots,\nu_{i}^{-1}(b_{k})$.  Moreover, if
$\tnu_{i}: \pi^{*}\tcU_{i}\to B_{i}$ denotes the corresponding fibration on $\pi^{*}\tcU_{i}$, where we regards $B_{i}$ as an open subset of $Y$, then we also know that $\widehat{N}_{\Phi}(P)(\tf)(\tp)$ only depends on the restriction of $\tf$ to the fibre $\tnu_{i}^{-1}(y_{0})$.  Without loss of generality, we can assume that $\tnu^{-1}_{i}(y_{0})$ is mapped to $\nu_{i}^{-1}(b_{1})$ under the quotient map.  Now, using a suitable cut-off function, we can make $\tf$ supported arbitrarily close to the fibre $\tnu^{-1}_{i}(y_{0})$ in such a way that we still have
$\widehat{N}_{\Phi}(P)(\tf)(\tp)\ne 0$.  On the other hand, if the support of $\tf$ is chosen sufficiently close to $\tnu^{-1}_{i}(y_{0})$, then the function $f$ will restrict to zero on $\nu_{i}^{-1}(b_{2}),\ldots, \nu_{i}^{-1}(b_{k})$, which will insures that
\begin{equation}
    r_{\widehat{\psi}}(\widehat{N}_{\Phi}(P))(f)(p)= \widehat{N}_{\Phi}(P)(\tf)(\tp)\ne 0.
\label{faith.2}\end{equation}
In particular, this shows $r_{\widehat{\psi}}(\widehat{N}_{\Phi}(P))\ne 0$.
\end{proof}

The function $f$ in \eqref{faith.2} can also be used to show the following.  

\begin{lemma}
Given $P\in \Psi^{m}_{\Phi,\Gamma}(\widetilde{M};E,F)$, then 
\[
    R_{q_{c}}(P)= 0 \; \Longrightarrow \; \widehat{N}_{\Phi}(P)=0.
\]
\label{symb.11}\end{lemma}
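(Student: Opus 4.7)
The plan is to prove the contrapositive: assuming $\widehat{N}_{\Phi}(P)\neq 0$, we exhibit a function $h \in \CI(M)$ such that $R_{q_{c}}(P)h \neq 0$. The idea is to use the same function $f$ constructed in the proof of Lemma~\ref{faith.1} as the ``boundary Fourier data'' of a test function $h$ on $M$.

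As in the proof of Lemma~\ref{faith.1}, pick $\tf\in \CI_{c}(\pi^{*}\tcU_{i})$ supported arbitrarily close to the fibre $\tnu_{i}^{-1}(y_{0})$, for $y_{0}\in Y$ a point with free $\Gamma$-stabilizer, together with $\tp$ above $y_{0}$, and let $f\in \CI_{c}(\pi^{*}\cU_{i})\subset \cS({}^{\cF}N^{*}\pa X)$ be its descended image, so that
\[
  \widehat{N}_{\Phi}(P)(\tf)(\tp) =  r_{\widehat{\psi}}(\widehat{N}_{\Phi}(P))(f)(p) \neq 0,
\]
where $p=q_{\widehat{\psi}}(\tp)$. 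From $f$, construct $h\in \CI(M)$ supported in a collar neighborhood of $\pa X$ whose fibrewise Fourier data along ${}^{\cF}N^{*}\pa X$ near the boundary equals $f$. Concretely, in local coordinates $(x,y,z)$ on $\cU_{i}$ with $y\in B_{i}$ base coordinates and $z\in F_{i}$ leaf coordinates, one may take
\[
  h(x,y,z) = \chi(x) \int e^{i\psi(y,\eta,\tau)/x}\, a(y,z,\eta,\tau)\, d\eta\, d\tau,
\]
where $\psi$ is the standard phase function arising from the $\Phi$-structure near $\ff_{\Phi}$, $a$ is a Schwartz amplitude built from $f$, and $\chi$ cuts off in $x$. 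The pullback $q_{c}^{*}h$ is then a $\Gamma$-invariant smooth function on $\tM$ whose boundary oscillatory data lifts exactly to $\tf$.

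By the standard interpretation of the normal operator as the rescaled limit of $P$ acting on such oscillatory lifts (compatible with the description of $N_{\Phi}(P)$ as the restriction of $K_{P}$ to $\ff_{\Phi}$ from \cite{Mazzeo-Melrose}), the leading asymptotic of $P(q_{c}^{*}h)$ on the ray of $\ff_{\Phi}$ above $\tp$ is a non-zero multiple of $\widehat{N}_{\Phi}(P)(\tf)(\tp)\neq 0$. Since $\tf$ is supported arbitrarily close to $\tnu_{i}^{-1}(y_{0})$ and $\Gamma$ acts freely at $y_{0}$, the support argument of Lemma~\ref{faith.1} prevents cancellation from other $\Gamma$-translates when we descend the asymptotic to $M$. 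Therefore $R_{q_{c}}(P)h$ has non-zero leading asymptotic at $\pa M$ near $p$, so in particular $R_{q_{c}}(P)h\neq 0$.

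The main technical obstacle is making rigorous the stationary-phase identification of the leading asymptotic of $P(q_{c}^{*}h)$ with the action of $\widehat{N}_{\Phi}(P)$ on $\tf$. This requires analyzing the Schwartz kernel of $P$ near $\ff_{\Phi}$ and confirming that the cutoffs in $h$ do not perturb the leading term; once this is in place, the non-cancellation upon descent follows verbatim from the localization argument leading to \eqref{faith.2}.
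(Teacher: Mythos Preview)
Your overall strategy matches the paper's: prove the contrapositive, start from the function $\tf$ of Lemma~\ref{faith.1} supported near a fibre over a point $y_{0}\in Y$ where $\Gamma$ acts freely, build a test function on $M$ whose behavior near the boundary is governed by $\widehat{N}_{\Phi}(P)(\tf)$, and use freeness at $y_{0}$ together with the compactness condition (iii) of Definition~\ref{tfb.13} to rule out cancellation upon descent.

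Where you diverge from the paper is in the construction of the test function and the asymptotic identification. You propose an oscillatory integral $h(x,y,z)=\chi(x)\int e^{i\psi/x}a\,d\eta\,d\tau$ and a stationary-phase argument, and you correctly flag the rigorous identification with $\widehat{N}_{\Phi}(P)$ as the main obstacle. The paper bypasses this entirely with a more elementary and explicit construction. In the coordinates $u=1/x$, $v=y/x$, $z$ near $\pa\tM$, it takes
\[
\tih(u,v,z)=\rho(v,u-2/\epsilon)\cdot \tf|_{y_{0}}(z,\eta_{0},\tau_{0}),
\]
where $\rho\in\CI_{c}(N_{y_{0}}Y)$ has $\hat{\rho}(\eta_{0},\tau_{0})\neq 0$. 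One checks directly from the integral formula for $N_{\Phi}(P)$ that $N_{\Phi}(P)|_{y_{0}}\tih\neq 0$. The key step is then to \emph{translate}: set $\tih_{k}(u,v,z)=\tih(u-k,v,z)$, so the support is pushed toward $\pa\tM$ as $k\to\infty$. Comparing \eqref{symb.22} with \eqref{symb.23} gives
\[
\lim_{k\to\infty}\bigl(P_{\chi}\tih_{k}(\tp_{k})-N_{\Phi}(P_{\chi})|_{y_{0}}\tih_{k}(\tp_{k})\bigr)=0,
\]
so $P_{\chi}\tih_{k}(\tp_{k})\neq 0$ for large $k$. The descent step then proceeds exactly as you describe.

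In short, your proposal is not wrong, but the ``technical obstacle'' you identify is genuine, and the paper's translation-to-infinity argument in the $(u,v,z)$ coordinates is precisely the clean way to resolve it without any stationary-phase analysis. If you were to make your oscillatory-integral approach rigorous, you would most likely end up reproducing this computation.
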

\begin{proof}
Without loss of generality, we can assume $E=F=\underline{\bbC}$.  Suppose that $\widehat{N}_{\Phi}(P)\ne 0$.  We need to show 
that $R_{q_{c}}(P)\ne 0$.  To do this, start with the function $f$ of \eqref{faith.2} constructed in the proof of Lemma~\ref{faith.1} above.  Recall that this function is
supported in $\pi^{*}\cU_{i}$ where $\pi: {}^{\cF}N^{*}\pa X\to \pa X$ is the bundle projection and $\cU_{i}\subset \pa X$ is a small open set as in \eqref{symb.12}, that is, $\left. \cF\right|_{\cU_{i}}$ has its leaves given by the fibre of a trivial fibration.  Moreover, the cover $q:\pa \tM\to \pa X$ admits a section $s_{i}$ over $\cU_{i}$ inducing a diffeomorphism $s_{i}:\cU_{i} \to \tcU_{i}\subset \pa \tM$.  If $\tf\in \CI_{c}(\pi^{*}\tcU_{i})$ is the pull-back of $f$ to $\pi^{*}\tcU_{i}$, then the way we constructed $f$ insures that there exists $\tp\in {}^{\Phi}N^{*}\pa\tM$ with image $p\in {}^{\cF}N^{*}\pa X$ such that \eqref{faith.2} holds.  In particular, if we set $(y_{0},\eta_{0}, \tau_{0})= \Phi_{*}(\tp)\in N^{*}Y\cong T^{*}Y\times \bbR$, then above the point $(y_{0},\eta_{0},\tau_{0})$, we have
\begin{equation}
   (\widehat{N}_{\Phi}(P)(\widetilde{f}))(y_{0},\eta_{0},\tau_{0})\ne 0 \; \mbox{in} \ \CI(\widetilde{Z}_{y_{0}}).
\label{symb.13}\end{equation}
 
 Recall also that the point $\tp$ was chosen in such a way that the action $\Gamma$ on $Y$ is free at $y_{0}$.
Choose coordinates $y$ on $B_{i}$ so that $y_{0}=0$ corresponds to the origin.
Let $\rho\in \CI_{c}(N_{y_{0}}Y)$ be a function supported in a small neighborhood of the origin with Fourier transform $\hat{\rho}\in \cS(N_{y_{0}}^{*}Y)$ such
that $\hat{\rho}(\eta_{0},\tau_{0})\ne 0$.  Let $z$ be coordinates on
$F_{i}$.  On $\widetilde{\cU}_{i}\times (0,\epsilon)$, we can then use the coordinates\begin{equation}
      u=\frac{1}{x}, \quad v= \frac{y}{x}, \quad z=z.
\label{symb.13b}\end{equation}
In these coordinates, consider the function
\[
      \tih=\rho(v,u- \frac{2}{\epsilon})\left(\left.\widetilde{f}\right|_{{}^{\Phi}N^{*}\widetilde{\cU}_{i}}(y_{0},z,\eta_{0},\tau_{0})\right).
\]
This function is well-defined with $\tih\in \CI_{c}(\widetilde{\cU}_{i}\times (0,\epsilon)_{x})$  (that is,  $u>\frac{1}{\epsilon}$ on the support of $\tih$) provided $\rho$ is chosen to have a sufficiently small support.       

Let $\chi$ be the pull-back to $\cU_{i}\times [0,\epsilon)_{x}$ of a cut-off function on $\cU_{i}$ such that $q_{c}^{*}\chi\equiv 1$ on the support of $\tih$ and  consider the operator
$P_{\chi}= P\circ q_{c}^{*}\chi$ obtained by first multiplying with 
$q_{c}^{*}\chi$ and then acting with $P$.  It is 
$\Gamma$-equivariant as well since $q_{c}^{*}\chi$ is a 
$\Gamma$-invariant function.  Since $q_{c}^{*}\chi\equiv 1$ on the support of $\tih$,
we have that $P\tih= P_{\chi}\tih$.  In the coordinates \eqref{symb.13b},
this can be written in terms of the Schwartz kernel of $P_{\chi}$ (\cf formula (3.10) in \cite{Mazzeo-Melrose}),
\begin{equation}
  (P_{\chi}\tih) (u,v,z)= \int K_{P_{\chi}}(u,v,S,Y,z,z')
  \tih(u-S, v-Y, z,z') dSdYdz', 
\label{symb.22}\end{equation}    
where $ S= u-u'$, $Y= v-v'$ and $z$ represents a point on the fibre $\widetilde{Z}$ not necessarily in 
$\widetilde{\cU}_{i}$.  On the other hand, we can make the restriction of the 
 normal operator $N_{\Phi}(P_{\chi})$ (not its Fourier transform)  at $y_{0}$
act on $\tih$ via the inclusion $\widetilde{\cU_{i}}\times(0,\epsilon)_{x}
\subset NY\times F_{i}$ given by the coordinates \eqref{symb.13b},
\begin{equation}
   \left.N_{\Phi}(P_{\chi})\right|_{y_{0}} \tih= 
\int K_{N_{\Phi}(P_{\chi})}(y_{0},S,Y,z,z')\tih(u-S,v-Y,z')dSdYdz'.  
\label{symb.23}\end{equation}
By our choice of $\tih$, notice that this is not zero.  Let $\tp=(u_{0},v_{0},z_{0})\in\tM$ be a point such that $\left.N_{\Phi}(P)\right|_{y_{0}}\tih(\tp)\ne 0$.  
For $k\in \bbN$, consider the new function
\begin{equation}
   \tih_{k}(u,v,z)= \tih(u-k,v,z).
\label{symb.24}\end{equation}
By translation invariance, we will also have that $\left. N_{\Phi}(P_{\chi})\right|_{y_{0}} \tih_{k} (\tp_{k})\ne 0$ with $\tp_{k}=(u_{0}+k,v_{0},z_{0})$.
Since the support of $\tih_{k}$ can be made arbitrarily close to the 
fibre above $y_{0}$ in the boundary of 
$\widetilde{\cU}_{i}\times [0,\epsilon)_{x}$ by taking $k$ large enough,
we see that 
\begin{equation}
         \lim_{k\to \infty} (P_{\chi}\tih_{k}(\tp_{k}) -\left.N_{\Phi}(P_{\chi})\right|_{y_{0}} \tih_{k}(\tp_{k}))=0. 
\label{symb.25}\end{equation}
In particular, taking
$k$ large enough, we have that $P_{\chi}\tih_{k}(\tp_{k})\ne 0$.  If $h_{k}$  denotes the 
corresponding function in $\cU_{i}\times [0,\epsilon)_{x}\subset M$ under
the quotient map, then, since the action of $\Gamma$ is free at $y_{0}\in Y$ and using the compactness property in (iii) of Definition~\ref{tfb.13}, we have that
\begin{equation}
        \lim_{k\to \infty} \left( R_{q_{c}}(P_{\chi})h_{k}(p_{k})- P_{\chi}\tih_{k}(\tp_{k})  \right)=0
\label{symb.24b}\end{equation}
where $p_{k}= q_{c}(\tp_{k})$.  In particular, for $k$ large enough, we have that
$R_{q_{c}}(P)h_{k}=R_{q_{c}}(P_{\chi})h_{k}\ne 0$, 
which shows that  $R_{q_{c}}(P)\ne 0$.

\end{proof}

With this lemma, we can now define the normal operator of a foliated cusp
pseudodifferential operator.

\begin{definition}
Given $P\in \Psi^{m}_{\cF}(X;E,F)$ of the form $P=\pi_{c}(P_{M})+
P_{X}$ with $P_{M}\in \Psi^{m}_{\Phi,\Gamma}(\widetilde{M};E,F)$ and 
$ P_{X}\in \dot{\Psi}^{m}(X;E,F)$, we define its normal
operator by
\[
   N_{\cF}(P)= r_{\psi}(N_{\Phi}(P_{M}))\in \Psi^{m}_{\sus-\cF}(\pa X;E,F),
\]
with Fourier transform $\widehat{N}_{\cF}(P)= r_{\widehat{\psi}}(\widehat{N}_{\Phi}(P_{M}))$,
where $\Psi^{m}_{\sus-\cF}(\pa X;E,F)$ is the image under the
representation $r_{\psi}$ of $\Gamma$-invariant operators in
$\Psi^{m}_{\sus-\Phi}(\pa \widetilde{M};E,F)$.  
By the previous lemma, this does not depend on the choice of 
$P_{M}$ and $P_{X}$.
\label{symb.19}\end{definition}

As for fibred cusp operators, the normal operator induces a short exact 
sequence
\begin{equation}
\xymatrix{
 0 \ar[r] & x\Psi^{m}_{\cF}(X;E,F) \ar[r] & \Psi^{m}_{\cF}(X;E,F) \ar[r]&
\Psi^{m}_{\sus-\cF}(\pa X;E,F) \ar[r] &  0. 
}
\label{symb.20}\end{equation}
This can be seen directly from Lemma~\ref{symb.11} and the corresponding
short exact sequence for $\widetilde{M}$,
\begin{equation}
\xymatrix{
 0 \ar[r] & x\Psi^{m}_{\Phi}(\widetilde{M};E,F) \ar[r] & \Psi^{m}_{\Phi}(\widetilde{M};E,F) \ar[r]&
\Psi^{m}_{\sus-\Phi}(\pa \widetilde{M};E,F) \ar[r] &  0. 
}
\label{symb.21}\end{equation}

\section{$\cF$-Sobolev spaces and a compactness criterion} \label{mp.0}

Let $L^{2}_{\cF}(X)$ be the $L^{2}$-space corresponding to a choice of  
$\cF$-metric $g_{\cF}$.  If $E$ and $F$ are smooth complex vector bundles with a choice of Hermitian metrics $h_{E}$ and $h_{F}$, then there are corresponding
$L^{2}$-spaces of sections $L^{2}_{\cF}(X;E)$ and $L^{2}_{\cF}(X;F)$ for these vector bundles.

One can also define $\cF$-Sobolev spaces.  Let $\cU_{1}, \ldots, \cU_{k}$ be
a finite open covering of $\pa X$ such that $\left. \cF\right|_{\cU_{i}}$ is induced by a trivial fibration 
\[
      \Phi_{i}: \cU_{i}\to B_{i} 
\] 
with fibres and base diffeomorphic to open balls in the Euclidean space.  We also assume the $\cU_{i}$ are chosen so that for each $i$ 
there is a section $\pa s_{i}$ of $\pa \widetilde{M} \to \pa M$ over 
$\cU_{i}$ inducing a diffeomorphism onto its image $\widetilde{\cU}_{i}= \pa s_{i}(\cU_{i})$.  Set $V_{i}= \cU_{i}\times [0,\epsilon)_{x}\subset M\subset X$.
  On each $V_{i}$, the metric $g_{\Phi_{i}}= \left. g_{\cF}\right|_{V_{i}}$ is a fibred cusp metric.  Using the definition of \cite{Mazzeo-Melrose}, there is a corresponding fibred cusp Sobolev space $H^{m}_{\Phi_{i}}(V_{i};E)$ of order $m\in \bbR$.  Choose an open set $V_{0}\subset X\setminus \pa X$ so that 
$V_{0}$ together with $V_{1}, \ldots, V_{k}$ is a finite covering of $X$.  Let $\varphi_{i}\in\CI_{c}(V_{i})$ be a partition of 
unity subordinate to this finite covering.  On $\CI_{c}(X;E)$, we
can consider the norm
\begin{equation}
  \| u\|_{H^{m}_{\cF}}^{2}= \|\varphi_{0}u \|^{2}_{H^{m}_{g_{\cF}}(V_{0};E)}
+ \sum_{i=1}^{k} \| \varphi_{i}u\|^{2}_{H^{m}_{\Phi_{i}}(V_{i};E)}.
\label{sob.1}\end{equation}  
\begin{definition}
We define $H^{m}_{\cF}(X;E)$ to be the closure of $\CI_{c}(X;E)$ with respect
to the norm \eqref{sob.1}.  We refer to the discussion after Proposition~\ref{sob.3} to see this definition does not depend on the choice of covering and partition of unity.
\label{sob.2}\end{definition}
We can also consider the corresponding weighed version.  
There is a continuous inclusion 
\begin{equation}
   x^{\ell}H^{m}_{\cF}(X;E)\subset x^{\ell'}H^{m'}_{\cF}(X;E)
\label{mp.5}\end{equation}
if and only if $\ell\ge \ell'$ and $m\ge m'$.  The inclusion is compact
if and only if $\ell> \ell'$ and $m>m'$.

\begin{proposition}
An $\cF$-operator $P\in \Psi^{m}_{\cF}(X;E,F)$ induces a continuous linear
map
\[
           P: x^{\ell}H^{m+k}_{\cF}(X;E)\to x^{\ell}H^{k}_{\cF}(X;F)
\]
for all real numbers $\ell$ and $k$.
\label{sob.3}\end{proposition}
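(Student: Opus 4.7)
The plan is to reduce the desired bound to the corresponding mapping property for fibred cusp operators on $\widetilde M$, which is part of the Mazzeo--Melrose calculus. First, using Definition~\ref{tfb.21}, decompose $P=\pi_{c}(P_{M})+P_{X}$ with $P_{M}\in\Psi^{m}_{\Phi,\Gamma}(\widetilde M;\tE,\tF)$ and $P_{X}\in\dot{\Psi}^{m}(X;E,F)$.  The kernel of $P_{X}$ vanishes to infinite order on $\pa X^{2}_{b}$ and is supported away from the diagonal at the boundary, so a standard argument shows $P_{X}$ maps $x^{\ell'}L^{2}_{\cF}$ to $x^{\ell}L^{2}_{\cF}$ for any real $\ell,\ell'$; using a finite number of $\cF$-vector fields to absorb Sobolev derivatives then gives continuity $x^{\ell}H^{m+k}_{\cF}(X;E)\to x^{\ell}H^{k}_{\cF}(X;F)$.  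So the content is in the main term $\pi_{c}(P_{M})$, which is supported in the collar $c:M\hookrightarrow X$.

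For that term, I would build a partition of unity relative to $\Gamma$ adapted to the cover of Definition~\ref{sob.2}.  Namely, pick a partition of unity $\varphi_{1},\dots,\varphi_{k}\in\CI_{c}(\pa X)$ subordinate to $\{\cU_{i}\}$, lift each $\varphi_{i}$ through the section $\pa s_{i}$ to $\widetilde\varphi_{i}\in\CI_{c}(\tcU_{i})\subset\CI_{c}(\pa\widetilde M)$, and set $\pa\varphi=\sum_{i}\widetilde\varphi_{i}$, a partition of unity relative to $\Gamma$ (see Definition~\ref{lift.1} and the paragraph following it).  Let $\varphi\in\CI(\widetilde M)$ denote the pull-back of $\pa\varphi$ to $\widetilde M$, constant in $x$.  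By the reformulation recorded just before \eqref{lift.2}, for $f\in\CI(M;E)$,
\[
   R_{q_{c}}(P_{M})f=(q_{c})_{*}\bigl(P_{M}(\varphi\,q_{c}^{*}f)\bigr),
\]
so it suffices to estimate separately the three operations $f\mapsto \varphi\,q_{c}^{*}f$, the action of $P_{M}$, and the push-forward $(q_{c})_{*}$.

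The first estimate is the comparison $\|\varphi\,q_{c}^{*}f\|_{x^{\ell}H^{m+k}_{\Phi}(\widetilde M;\tE)}^{2}\lesssim \|f\|_{x^{\ell}H^{m+k}_{\cF}(M;E)}^{2}$.  This is immediate from the definitions: each piece $\widetilde\varphi_{i}\,q_{c}^{*}f$ is supported in $\tcU_{i}\times[0,\epsilon)_{x}$, the diffeomorphism $s_{i}$ carries this isometrically onto a piece $\varphi_{i}f$ living in $V_{i}$, and the $\Phi$-metric on $\widetilde M$ is by construction the lift of the local $\cF$-metric.  The second estimate is a direct application of the Mazzeo--Melrose continuity of $\Phi$-pseudodifferential operators on weighted $\Phi$-Sobolev spaces, applied to $P_{M}$ on $\widetilde M$ (where, even though $\widetilde M$ is not compact, all calculations can be localized near the compactly supported lifts $\widetilde\varphi_{i}$, reducing to the compact case treated in \cite{Mazzeo-Melrose}).

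The last and main step is the push-forward bound
\[
   \bigl\|(q_{c})_{*}g\bigr\|_{x^{\ell}H^{k}_{\cF}(M;F)}\lesssim\|g\|_{x^{\ell}H^{k}_{\Phi}(\widetilde M;\tF)}\qquad \text{for }g=P_{M}(\varphi\,q_{c}^{*}f).
\]
This is the point where condition (iii) of Definition~\ref{tfb.13} is essential: since $K_{P_{M}}$ is compactly supported modulo $\Gamma$, for every $m\in M$ and every choice of preimage $\widetilde m\in\widetilde M$ only finitely many $\gamma\in\Gamma$ satisfy $g(\widetilde m\cdot\gamma)\neq 0$, and the number is locally uniformly bounded.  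Combining this local finiteness with the $\Gamma$-equivariance of the $\Phi$-metric and the fact that $\tcU_{i}\to\cU_{i}$ is a diffeomorphism yields the claimed inequality term by term in the defining sum \eqref{sob.1} of $\|\cdot\|_{H^{k}_{\cF}}$.  Composing the three estimates proves the bound on $\CI_{c}(X;E)$, which is dense by definition in $x^{\ell}H^{m+k}_{\cF}(X;E)$, so the result extends by continuity.  The subtlety concentrated in this last step---quantifying the local finiteness of the $\Gamma$-translates in the push-forward---is where the compact-support condition (iii) does its real work, and it is the only part of the argument that genuinely uses the special structure of $\cF$-operators beyond the fibred cusp case.
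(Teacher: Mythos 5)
Your proposal is correct and follows essentially the same route as the paper: decompose $P$ into $\pi_c(P_M)+P_X$, handle $P_X$ by the standard Mazzeo--Melrose boundedness, and reduce the main term to the $\Gamma$-invariant $\Phi$-calculus on $\widetilde M$ via a partition of unity relative to $\Gamma$ and the identity $R_{q_c}(P_M)f=(q_c)_*\bigl(P_M(\varphi\,q_c^*f)\bigr)$, with condition (iii) of Definition~\ref{tfb.13} supplying the needed compactness/local finiteness. The only difference is one of exposition: you spell out the pull-back, operator, and push-forward estimates as three separate steps, whereas the paper compresses them into the single assertion that $\widetilde P_1\circ\varphi$ is bounded on the weighted $\Phi$-Sobolev spaces.
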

\begin{proof}
By definition, an operator $P\in \Psi^{m}_{\cF}(X;E,F)$ is of the form
\begin{equation}
 P=\pi_{c}(P_{1})+P_{2}, \quad P_{1}\in \Psi^{m}_{\cF}(M;E,F), \;\;
        P_{2}\in \dot{\Psi}^{m}(X;E,F).
\label{mp.2}\end{equation}
Using a partition of unity and the boundedness result of \cite{Mazzeo-Melrose}, it is easy to see that the operator $P_{2}$ induces a bounded linear map
\[
               P_{2}:  x^{\ell}H^{m+k}_{\cF}(X;E)\to x^{\ell}H^{k}_{\cF}(X;F).
\]
For $\pi_{c}(P_{1})$, let $\widetilde{P}_{1}\in \Psi^{m}_{\Phi,\Gamma}(\widetilde{M};E,F)$  be a corresponding $\Gamma$-invariant operator such that 
$P_{1}= R_{q_{c}}(\widetilde{P}_{1})$.

Let $\varphi\in \CI(\widetilde{M})$ be a partition of unity relative to $\Gamma$ on $\widetilde{M}$ obtained from the pull-back of a corresponding partition
of unity $\pa\varphi\in \CI_{c}(\pa\widetilde{M})$ 
relative to $\Gamma$ on $\pa \widetilde{M}$.  We see from condition $(iii)$ of
Definition~\ref{tfb.13} and the Sobolev-boundedness result of \cite{Mazzeo-Melrose} that  $\widetilde{P}_{1}\circ\varphi$ induces a bounded linear map
\[
        \widetilde{P}_{1}\circ \varphi: x^{\ell}H^{m+k}_{\Phi}(\widetilde{M};E)
\to x^{\ell}H^{k}_{\Phi}(\widetilde{M};F).
\]  
Since for $f\in \dot{\mathcal{C}}^{\infty}(X)$, we have
\[
  \pi_{c}(P_{1})f= c_{*}P_{1}c^{*}f, \quad P_{1}c^{*}f=
(q_{c})_{*}\widetilde{P}_{1}(\varphi q_{c}^{*}c^{*}f),
\]
we see that this implies $\pi_{c}(P_{1})$ induces a bounded linear map
\[
   \pi_{c}(P_{1})=  x^{\ell}H^{m+k}_{\cF}(X;E)\to x^{\ell}H^{k}_{\cF}(X;F).
\]
\end{proof}

To see that the definition of $H^{m}_{\cF}(X;E)$ does not depend of the choice of covering and partition of unity, we provide an alternative description.  For 
$m>0$, let $A_{\frac{m}{2}}\in \Psi^{\frac{m}{2}}_{\cF}(X;E)$ be
an elliptic ${\cF}$-operator and consider the operator 
\begin{equation}
          D_{m}=A^{*}_{\frac{m}{2}}A_{\frac{m}{2}}+1.
\label{sob.4a}\end{equation}
In particular, it induces a continuous linear map $D_{m}: H^{m+k}_{\cF}(X;E)\to H^{k}(X;E)$.  Since it is elliptic, by a standard construction,
there exists $B_{-m}\in \Psi^{-m}_{\cF}(X;E)$ such that 
\begin{equation}
    B_{-m}D_{m}-\Id_{E}= R \in \Psi^{-\infty}_{\cF}(X;E).
\label{sob.4}\end{equation} 
In particular, if $u\in H^{q}(X;E)$ is such
that $D_{m} u=0$, we see from \eqref{sob.4} that $u=Ru$, so that 
\[
    u\in \bigcap_{j} H^{j}_{\cF}(X;E).
\]
In that case, 
\[
  D_{m}u=0 \quad \Longrightarrow \quad \|A_{\frac{m}{2}}u\|^{2}_{L^{2}_{\cF}}
   + \|u\|_{L^{2}_{\cF}}^{2} =0 \quad \Longrightarrow \quad u\equiv 0.
\]

Since $D_{m}$ is formally self-adjoint, this means it induces a bijective
continuous linear map
\[
       D_{m}: H^{m}_{\cF}(X;E)\to L^{2}_{\cF}(X;E).
\]
In particular, instead of \eqref{sob.1}, we can use the equivalent norm $u\mapsto \|D_{m}u\|_{L^{2}_{\cF}}$ for $m>0$ to define $H^{m}_{\cF}(X;E)$.  If we think
of $H^{-m}_{\cF}(X;E)$ as the dual of $H^{m}_{\cF}(X;F)$ seen as a subspace of 
$\mathcal{C}^{-\infty}(X;E)$, then $D^{*}_{m}$ induces a bijective continuous
linear map 
\[
  D_{m}^{*}: L^{2}_{\cF}(X;E)\to H^{-m}_{\cF}(X;E).
\]
Thus, in this case, we can use the equivalent norm $u\mapsto \| (D^{*}_{m})^{-1}u\|_{L^{2}_{\cF}}$ on  $H^{-m}_{\cF}(X;E)$.

\begin{theorem}[Compactness criterion]
For $\delta>0$, an operator $P\in \Psi^{m-\delta}_{\cF}(X;E,F)$ is compact
from $x^{\ell}H^{m+k}_{\cF}(X;E)$ to $x^{\ell}H^{k}_{\cF}(X;F)$ if and only if $N_{\cF}(P)=0$.  In
particular, a polyhomogeneous operator $P\in \Psi^{m}_{\cF}(X;E,F)$ is compact as a map
from $x^{\ell}H^{m+k}_{\cF}(X;E)$ to $x^{\ell}H^{k}_{\cF}(X;F)$ 
if and only if it is in $x\Psi^{m-1}_{\cF}(X;E,F)$.
\label{mp.6}\end{theorem}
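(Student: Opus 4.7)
The proof will rely on the normal operator exact sequence \eqref{symb.20} together with the continuous and compact embedding properties of the weighted $\cF$-Sobolev spaces. Since conjugation by $x^\ell$ identifies $x^\ell H^{s}_\cF$ with $H^{s}_\cF$ and preserves the normal operator by Remark~\ref{conjugation.2}, we may reduce to the case $\ell = 0$.

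For the sufficiency direction, if $N_\cF(P) = 0$ for $P \in \Psi^{m-\delta}_\cF(X;E,F)$, then \eqref{symb.20} at order $m-\delta$ yields $P = xQ$ for some $Q \in \Psi^{m-\delta}_\cF(X;E,F)$. Proposition~\ref{sob.3} provides a continuous factorization
\[
H^{m+k}_\cF(X;E) \xrightarrow{\;Q\;} H^{k+\delta}_\cF(X;F) \xrightarrow{\;x\,\cdot\;} x H^{k+\delta}_\cF(X;F) \hookrightarrow H^k_\cF(X;F),
\]
and the final inclusion is compact by the observation following \eqref{mp.5} since both $1>0$ and $k+\delta>k$.

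For the necessity direction, assume $N_\cF(P) \ne 0$. Writing $P = \pi_c \circ R_{q_c}(\widetilde P) + P_X$ with $\widetilde P \in \Psi^{m-\delta}_{\Phi,\Gamma}(\widetilde M;\widetilde E,\widetilde F)$, Lemma~\ref{faith.1} ensures $N_\Phi(\widetilde P) \neq 0$. We recycle the construction from the proof of Lemma~\ref{symb.11} to produce a compactly supported function $\tih$ together with its $u$-translates $\tih_k(u,v,z) = \tih(u-k,v,z)$ in the coordinates $u = 1/x$, $v = y/x$, where the support is contained in a foliation-trivializing chart above a point $y_0 \in Y$ at which $\Gamma$ acts freely. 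Descending to $M$ and extending by zero to $X$, we obtain $h_k \in \dot{\cC}^\infty_c(X;E)$ whose supports drift towards $\pa X$ and become pairwise disjoint for $k$ large. Since the $\cF$-vector fields near $\pa X$, expressed in the $(u,v,z)$ coordinates as $x^2 \pa_x = -\pa_u$ and $x\pa_y = \pa_v$ at fixed $x$, are $u$-translation invariant, the $\cF$-Sobolev norms of $h_k$ are $k$-independent, so $\{h_k\}$ is bounded and weakly null in $H^{m+k}_\cF(X;E)$. The translation invariance of $N_\Phi(\widetilde P)$ combined with the asymptotic estimate \eqref{symb.25}, promoted to an $H^k_\cF$-norm statement by the same localization argument, shows that $Ph_k$ equals, up to an error vanishing in $H^k_\cF$, a translate (which is an isometry of $H^k_\cF$) of the fixed non-zero function $\left.N_\Phi(\widetilde P)\right|_{y_0}\!\tih$. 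Hence $\|Ph_k\|_{H^k_\cF}$ is uniformly bounded below while the supports of $Ph_k$ become disjoint, so no subsequence of $\{Ph_k\}$ is Cauchy and $P$ is not compact.

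The polyhomogeneous statement follows from the first part together with a standard interior WKB-type argument: a compact polyhomogeneous $\cF$-operator of order $m$ must have vanishing principal symbol (otherwise a family of oscillatory test functions localized in phase space at an interior point where $\sigma_m(P) \neq 0$ contradicts compactness after Sobolev normalization), hence $P \in \Psi^{m-1}_{\cF-\ph}$; then the first part applied with $\delta = 1$ gives $N_\cF(P) = 0$, so $P \in x\Psi^{m-1}_{\cF-\ph}$ by the exact sequence. The main obstacle is promoting the pointwise convergence in \eqref{symb.25} to an $H^k_\cF$-norm estimate while simultaneously tracking the $\Gamma$-equivariant descent, so that the translated functions $\tih_k$ correctly descend to $h_k$ with the desired support and norm properties on $M$.
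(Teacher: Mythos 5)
Your proof is correct, and the necessity direction (non-vanishing normal operator implies non-compactness) follows the paper's argument essentially verbatim: the same translated test functions $\tih_{k}$ from the proof of Lemma~\ref{symb.11}, the same translation-invariance of the norms, and the same conclusion that $\{Ph_{k}\}$ has no convergent subsequence. Note that the "main obstacle" you flag — upgrading the pointwise statement \eqref{symb.25} to an $H^{k}_{\cF}$-norm statement — is handled in the paper by first replacing the lift $\widetilde P$ by one whose Schwartz kernel is exactly translation invariant in $u=1/x$ near $\pa\widetilde M$ (the discrepancy lying in $x\Psi^{m-\delta}_{\cF}$, hence compact by the first half), after which $Ph_{k}$ is eventually an exact translate of a fixed nonzero function; adopting that reduction would close your remaining soft spot cleanly. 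Where you genuinely diverge is the sufficiency direction: the paper proves that $N_{\cF}(P)=0$ implies compactness by invoking the density of $\dot\Psi^{m-\delta}(X;E,F)$ in $x\Psi^{m-\delta}_{\cF}(X;E,F)$ for the order-$m$ operator topology and the compactness of operators in $\dot\Psi^{m-\delta}$, whereas you factor $P=xQ$ through the exact sequence \eqref{symb.20} and compose the bounded map $Q:H^{m+k}_{\cF}\to H^{k+\delta}_{\cF}$ with the compact inclusion $xH^{k+\delta}_{\cF}\hookrightarrow H^{k}_{\cF}$ of \eqref{mp.5}. Your route is more elementary in that it avoids specifying a topology on the operator space and an approximation argument, at the cost of leaning on the (asserted but unproved) compactness of the weighted Sobolev embedding; the two inputs are of comparable depth, so this is a legitimate and arguably cleaner alternative. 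Your treatment of the polyhomogeneous corollary — compactness forces the principal symbol to vanish by an interior oscillatory-test-function argument, then the first part with $\delta=1$ gives $N_{\cF}(P)=0$ — supplies details the paper leaves implicit behind "in particular," and is sound.
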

\begin{proof}
By considering the operator $\tilde{P}= x^{\ell}Px^{-\ell}$ and using the fact
$N_{\cF}(\tilde{P})= N_{\cF}(P)$ (see Remark~\ref{conjugation.2}), we can reduce to the case where $P\in \Psi^{m-\delta}_{\cF}(X;E,F)$ is seen as a bounded linear
operator 
\begin{equation}
    H^{m+k}_{\cF}(X;E)\to H^{k}_{\cF}(X;F).
\label{sob.5}\end{equation}
Since any operator $Q\in \dot{\Psi}^{m-\delta}(X;E,F)$ induces a compact 
operator
\[
          Q:H^{m+k}_{\cF}(X;E)\to H^{k}_{\cF}(X;F)  ,
\]
we see by the density of $\dot{\Psi}^{m- \delta}(X;E,F)$ in
$x\Psi^{m-\delta}_{\cF}(X;E,F)$ (using the topology of $\Psi^{m}_{\cF}(X;E,F)$)
that any operator in $x\Psi^{m-\delta}_{\cF}(X;E,F)$ is compact as a continuous linear map from $H^{m+k}_{\cF}(X;E)$ to $ H^{k}_{\cF}(X;F)$ .  This shows
that $P\in \Psi^{m-\delta}_{\cF}(X;E,F)$ is compact in that sense
whenever $N_{\cF}(P)=0$.

Conversely, if $P\in \Psi^{m-\delta}_{\cF}(X;E,F)$ is such that 
$N_{\cF}(P)\ne 0$, we need to show that $P$ is not compact as a map from
$H^{m+k}_{\cF}(X;E)$ to $H^{k}_{\cF}(X;F)$.  
Without loss of generality, we can assume that $E=F=\underline{\bbC}$ are
trivial line bundles so that $P\in \Psi^{m-\delta}_{\cF}(X)$.  Since any 
operator in $x\Psi_{\cF}^{m-\delta}(X)$ is compact as a map from 
$H^{m+k}_{\cF}(X)$ to $H^{k}_{\cF}(X)$, we can also assume $P$ is
of the form $\pi_{c}\circ R_{q_{c}}(A)$ with $A\in \Psi^{m-\delta}_{\Phi,\Gamma}(\tM)$.  For the same reason, we can assume $A$ is translation invariant in the $u=\frac{1}{x}$ variable near $\pa \tM$, namely, its Schwartz kernel is of the form 
\[
    K_A(u,u', v,v', z,z')= K_A(u-u', v,v',z,z'), \quad \mbox{for} \; u,u' >>0. 
\]

Since we assume $N_{\cF}(P)\ne 0$, we can apply the construction
in the proof Lemma~\ref{symb.11}.  Thus consider the sequence of functions
$h_{k}$ given in \eqref{symb.24b}.  If we define the $L^{2}$-norm of
$L^{2}_{\cF}(M)$ using an $\cF$-metric which is of the form
\[
          g_{\cF}= du^{2}+ dv^{2}+ g_{F_{i}}
\] 
in $\cU_{i}\times (0,\epsilon)_{x}$, then clearly each element of the sequence
$h_{k}$ has the same $L^{2}$-norm. Similarly, using the Fourier transform in the $u$ and $v$ variables, we can choose the various 
$\cF$-Sobolev norms so that they are 
translation invariant in the variables $u$ and $v$.  
With this choice, each element of the sequence $h_{k}$ has the same
$H^{m+k}_{\cF}$-norm, which we can assume is equal to $1$ by rescaling.  
Thus, since $P$ is translation 
invariant in the $u$ variable near $\pa X$,  each element of the sequence $\{Ph_{k}\}_{k\in\bbN}$ will have the same non-zero $H^{k}_{\cF}$-norm provided we choose the $\cF$-metric $g_{\cF}$ to be translation invariant in the $u$ variable near $\pa X$.
Moreover, the support of $P h_{k}$ goes
to infinity as $k\to\infty$, so $\{Ph_{k}\}_{k\in\bbN}$ has no converging subsequence. 
This means $P$ cannot be compact.   
 
\end{proof}

\section{Sobolev spaces for $\cF$-suspended operators} \label{sss.0}

Let $V$ be a finite dimensional real vector space and $W$ a compact manifold.  For $E, F$ complex vector bundles over $W$, we can consider the space
\begin{equation}
  \Psi^{m}_{\sus(V)}(W;E,F) \subset \Psi^{m}(W\times V;E,F)
\label{sss.1}\end{equation}
of $V$-suspended operators of order $m$ on $W$, see \cite{Melrose_eta}
and \cite{Mazzeo-Melrose} for a definition.  Let $\langle \cdot,\cdot\rangle_{V}$ be a choice of inner product on $V$ and $g_{V}$ be the corresponding Euclidean metric on $V$ seen as a manifold.     If $r\in \CI(V)$ is the distance function from the origin, then as 
described in \cite{MelroseGST}, the function $\rho= \frac{1}{\sqrt{r^{2}+1}}\in \CI(\overline{V})$ can be seen as a boundary defining function for the radial
compactification $\overline{V}$ of $V$.  Thus $\overline{V}$ is a manifold with
boundary diffeomorphic to the closed unit ball and such that $V= \overline{V}\setminus \pa \overline{V}$.  If  $g_{W}$ is a choice of Riemannian metric on $W$, then the product metric
\begin{equation}
   g_{\psi}= g_{V} \oplus g_{W} 
\label{sss.2}\end{equation}
is a fibred cusp metric on $V\times W\subset \overline{V}\times W$ with fibration
on the boundary $\pa (\overline{V}\times W)= \pa \overline{V}\times W$ given by the projection on the left factor
\begin{equation}
\xymatrix{  W  \ar@{-}[r] & \pa \overline{V}\times W \ar[d]^{\psi} \\
                               &    \pa \overline{V}
}
\label{sss.3}\end{equation}
From this point of view, the inclusion \eqref{sss.1} can be refined to an inclusion
\begin{equation}
   \Psi^{m}_{\sus(V)}(W;E,F) \subset \Psi^{m}_{\psi}(\overline{V}\times W;E,F).
\label{sss.4}\end{equation}
In other words, $V$-suspended operators on $W$ can be seen as a particular type of fibred cusp pseudodifferential operators on $\overline{V}\times W$.   In
this particular case, the bundle ${}^{\psi}N\pa (\overline{V}\times W)$ is canonically trivial with its fibres canonically identified with $V$.  Under this
identification, we have
\begin{equation}
  \Psi^{m}_{\sus-\psi}(\pa (\overline{V}\times W);E,F) = \CI(\pa \overline{V};
  \Psi^{m}_{\sus(V)}(W;E,F))
\label{sss.5}\end{equation} 
and the normal operator $N(P)$ of a $V$-suspended operator $P$ seen as a 
$\psi$-operator is just the constant family given by 
\begin{equation}
      N(P)_{q}= P, \quad \forall \ q\in \pa \overline{V}.
\label{sss.6}\end{equation}
This suggests the following definition.

\begin{definition}
The $V$-suspended Sobolev space of order $m$ on $W$ is 
\[
H^{m}_{\sus(V)}(W;E) = H^{m}_{\psi}(\overline{V}\times W;E)
\]
where $H^{m}_{\psi}(\overline{V}\times W;E)$ is the $\psi$-Sobolev space
of order $m$ 
associated to the fibred cusp metric $g_{\psi}= g_{V}\oplus g_{W}$.   
\label{sss.7}\end{definition}
\begin{proposition}
A $V$-suspended operator $P\in \Psi^{m}_{\sus(V)}(W;E,F)$ defines a continuous linear map 
\[
P: H^{m+k}_{\sus(V)}(W;E)\to H^{k}_{\sus(V)}(W;F)
\]
for all $k\in \bbR$.  This map is Fredholm if and only if $P$ is invertible.  
\label{sss.8}\end{proposition}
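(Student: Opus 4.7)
The plan is to deduce both assertions by embedding the $V$-suspended calculus into the $\psi$-calculus via the inclusion \eqref{sss.4} and then invoking the Mazzeo-Melrose theory of fibred cusp operators, at which point the statement becomes essentially tautological because formula \eqref{sss.6} identifies the normal operator with $P$ itself.

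For the continuity claim, I would first observe that by Definition~\ref{sss.7} we have the identification $H^{m+k}_{\sus(V)}(W;E) = H^{m+k}_{\psi}(\overline{V}\times W;E)$, and by \eqref{sss.4} the operator $P$ is a $\psi$-pseudodifferential operator of order $m$ on $\overline{V}\times W$. The $\Phi$-version of Proposition~\ref{sob.3} (the Mazzeo-Melrose Sobolev boundedness for fibred cusp operators) then yields the continuous mapping property. No estimate beyond the standard $\Phi$-Sobolev boundedness is needed.

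For the equivalence Fredholm $\Leftrightarrow$ invertible, the direction (invertible $\Rightarrow$ Fredholm) is obvious, so the content is in the converse. Assume $P$ is Fredholm as a map $H^{m+k}_{\psi}(\overline{V}\times W;E)\to H^{k}_{\psi}(\overline{V}\times W;F)$. The Mazzeo-Melrose Fredholm criterion for the $\Phi$-calculus asserts that $P$ is then fully elliptic: both its principal symbol and its normal operator are invertible. The key observation, which I would then invoke, is formula \eqref{sss.6}: when $P \in \Psi^{m}_{\sus(V)}(W;E,F)$ is regarded as a $\psi$-operator on $\overline{V}\times W$, its normal operator is the \emph{constant} family $N(P)_q=P$ for all $q \in \partial\overline{V}$. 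Hence invertibility of $N(P)$ is nothing other than invertibility of $P$ itself as a continuous linear map $H^{m+k}_{\sus(V)}(W;E) \to H^{k}_{\sus(V)}(W;F)$, concluding the argument.

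There is no serious obstacle here; the only conceptual input is recognizing that embedding $\Psi^{m}_{\sus(V)}(W;E,F)$ into the $\psi$-calculus via \eqref{sss.4} collapses the normal operator to $P$ itself, so the full-ellipticity criterion reduces to the invertibility of the principal symbol (automatic if $P$ is invertible) together with the invertibility of $P$ as an operator. Invertibility at one Sobolev order then propagates to all orders by the standard fact that the inverse of an invertible fully elliptic fibred cusp operator lies in the calculus, ensuring that the statement holds uniformly in $k$.
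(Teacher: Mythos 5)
Your proof is correct and follows essentially the same route as the paper: the continuity claim is obtained exactly as in the paper via the inclusion \eqref{sss.4} and fibred cusp Sobolev boundedness, and your Fredholm argument is precisely the "alternative" proof the paper itself indicates, namely combining \eqref{sss.6} with the Mazzeo--Melrose Fredholm criterion. (The paper's primary one-line argument instead notes directly that a translation-invariant operator in the $V$ direction can only be Fredholm if it is invertible, but your elaboration of the alternative is equally valid.)
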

\begin{proof}
Since $\Psi^{m}_{\sus(V)}(W;E,F)\subset \Psi^{m}_{\psi}(\overline{V}\times W;E,F)$, this mapping property follows from the corresponding one for fibred cusp operators.   Moreover, since $P$ is translation invariant in the $V$ direction, the only way it can be Fredholm is if it is invertible.  Alternatively, this can be shown using \eqref{sss.6} and the Fredholm criterion of Mazzeo and Melrose \cite{Mazzeo-Melrose} for fibred cusp operators.
\end{proof}
 
 If instead we have a family of $V$-suspended operators parametrized by a smooth compact manifold $B$, then the natural Sobolev space one should consider for such a family $P\in \CI(B;\Psi^{m}_{\sus(V)}(W;E,F))$ is given by
 taking the closure of $\cS(B\times V\times W;E) = \dot{\cC}^{\infty}(B\times \overline{V}\times W;E)$ using the norm
 \begin{equation*}
    \| u \| = \sup_{ b\in B} \| u_{b}\|_{H^{m}_{\sus(V)}(W;E)}, \quad u_{b}= 
    \left. u \right|_{\{b\}\times V\times W}.
 \end{equation*}
 Since the normal operator of a fibred cusp operator is a family of suspended operators, we can apply the construction above to this case.  More precisely, suppose now $W$ is a manifold with boundary with 
 $\rho\in \CI(W)$ a choice of boundary defining function and with a fibration on the boundary $\psi: \pa W \to B$.
In this case, an operator $P\in \Psi^{m}_{\sus-\psi}(\pa W;E,F)$ is a family
of operators on ${}^{\psi}N\pa W$ parametrized by $B$.  For each $b\in B$,
we have a corresponding operator 
\[
      P_{b} \in \Psi^{m}_{\sus(N_{b}B)}(\psi^{-1}(b);E,F)
\]
where $NB\to B$ is the bundle leading to the canonical identification
${}^{\psi}N\pa W= \psi^{*}NB$.  
\begin{definition}
On the space of Schwartz sections $\cS({}^{\psi}N\pa W;E)$, consider the norm
\[
   \| u\|_{H^{m}_{\sus-\psi}(\pa W;E)}= \sup_{b\in B} 
    \| u_{b}\|_{H^{m}_{\sus(N_{b}B)}(\psi^{-1}(b);E)}
\]
where $u_{b}= \left. u\right|_{N_{b}B\times \psi^{-1}(b)}$ and we use the identification
  $N_{b}B\times\psi^{-1}(b)= \left.{}^{\psi}N\pa W  \right|_{\psi^{-1}(b)}$.
 We define $H^{m}_{\sus-\psi}(\pa W;E)$ to be the closure of $\cS({}^{\psi}N\pa W;E)$ with respect to this norm. 
\label{sss.9}\end{definition}

The space $H^{m}_{\sus-\psi}(\pa W;E)$ is a Banach space.  It is also naturally
a $\cC^{0}(B)$-Hilbert module.   Proposition~\ref{sss.8} has the following immediate generalization.
\begin{proposition}
An operator $P\in \Psi^{m}_{\sus-\psi}(\pa W;E,F)$ induces a continuous linear
map 
\[
    P: H^{m+k}_{\sus-\psi}(\pa W;E) \to H^{k}_{\sus-\psi}(\pa W;F)
\]
of $\cC^{0}(B)$-Hilbert modules for all $k\in \bbR$. 
\label{sss.10}\end{proposition}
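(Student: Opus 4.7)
The plan is to reduce the uniform statement to the fibrewise one, using compactness of the base $B$ and the smoothness of the family $P$.

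First, I would invoke Proposition~\ref{sss.8} pointwise: for each $b\in B$, the operator $P_{b}\in \Psi^{m}_{\sus(N_{b}B)}(\psi^{-1}(b);E,F)$ induces a continuous linear map
\[
 P_{b}: H^{m+k}_{\sus(N_{b}B)}(\psi^{-1}(b);E)\to H^{k}_{\sus(N_{b}B)}(\psi^{-1}(b);F)
\]
with some operator norm $c(b) := \|P_{b}\|$. The whole proof then reduces to showing that $\sup_{b\in B} c(b) <\infty$.

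The key step is establishing this uniform bound. I would argue that $b\mapsto c(b)$ is controlled by finitely many seminorms of the Schwartz kernel $K_{P}$ of $P$, viewed as a distribution on $B\times \tX^{2}_{\psi_{b}}$ (the fibred-cusp double-space for each fibre), with conormal regularity along the lifted diagonal and smooth dependence on $b$. By the standard $L^{2}$-boundedness argument for fibred cusp operators (\`a la H\"ormander/Calder\'on--Vaillant, as used in \cite{Mazzeo-Melrose}), the operator norm on the fibrewise Sobolev space is dominated by a continuous function of finitely many such seminorms. Since these seminorms depend continuously on $b$ by the assumed smoothness of the family, the function $b\mapsto c(b)$ is upper semi-continuous, and the compactness of $B$ gives $C:= \sup_{b\in B} c(b) <\infty$.

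Given this uniform bound, for $u\in \cS({}^{\psi}N\pa W;E)$ one immediately estimates
\[
 \|Pu\|_{H^{k}_{\sus-\psi}(\pa W;F)}
 = \sup_{b\in B} \|P_{b}u_{b}\|_{H^{k}_{\sus(N_{b}B)}}
 \le C \sup_{b\in B} \|u_{b}\|_{H^{m+k}_{\sus(N_{b}B)}}
 = C\,\|u\|_{H^{m+k}_{\sus-\psi}(\pa W;E)},
\]
and $P$ extends by density to the stated continuous linear map. Finally, for the $\cC^{0}(B)$-module property, note that $\cC^{0}(B)$ acts on $H^{*}_{\sus-\psi}(\pa W;\cdot)$ by pointwise multiplication with the pull-back of $f\in \cC^{0}(B)$ (constant along each ${}^{\psi}N_{b}B$-fibre), while $P$ acts fibrewise; these two actions commute, so $P(fu)=f\cdot Pu$, proving that $P$ is a bounded $\cC^{0}(B)$-module map.

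The main obstacle is Step~2: confirming that the $L^{2}$-boundedness constant for a suspended fibred-cusp operator can be controlled uniformly by seminorms of the Schwartz kernel that vary continuously in the family parameter $b$. This is a quantitative refinement of the fibrewise statement of Proposition~\ref{sss.8}; once that is granted, the rest of the argument is essentially formal.
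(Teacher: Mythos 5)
Your argument is correct and follows exactly the route the paper intends: the paper offers no written proof, declaring the statement an ``immediate generalization'' of Proposition~\ref{sss.8}, and your reduction to a uniform bound $\sup_{b\in B}\|P_{b}\|<\infty$ via smoothness of the family and compactness of $B$, together with the $\sup$-norm definition of $H^{m}_{\sus-\psi}$ and the observation that $P$ commutes with multiplication by functions pulled back from $B$, is precisely the content being taken for granted. You have also correctly isolated the only nontrivial point (continuity in $b$ of the seminorms controlling the fibrewise operator norm), which does hold here since the Schwartz kernel is a smooth family over the compact base.
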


To define the Sobolev spaces associated more generally to $\cF$-suspended
operators, we can proceed as follows.  Let $\varphi\in \CI_{c}(\pa \widetilde{M})$
be a partition of unity relative to $\Gamma$ for the cover $\pa \widetilde{M}\to M$.  If $\pi: {}^{\Phi}N\pa \widetilde{M}\to \pa \widetilde{M}$ is the vector bundle
projection, the function $\pi^{*}\varphi \in \CI({}^{\Phi}N\pa \widetilde{M})$ is a partition of
unity relative to $\Gamma$ for the quotient map
\[
      q_{\psi}: {}^{\Phi}N\pa \widetilde{M}\to {}^{\cF}N\pa M.
\]
\begin{definition}
On the space of smooth sections $\cS({}^{\cF}N\pa X;E)$, consider the norm
\[
   \| u\|_{H^{m}_{\sus-\cF}(\pa X;E)} = \| (\pi^{*}\varphi) q_{\psi}^{*} u\|_{H^{m}_{\sus-\Phi}(\pa \widetilde{M};E)}.
\]
The space $H^{m}_{\sus-\cF}(\pa X;E)$ is the closure  of $\cS({}^{\cF}N\pa X;E)$ 
with respect to this norm.  
\label{sss.11}\end{definition}
Proposition~\ref{sss.10} generalizes immediately to the following.
\begin{proposition}
An $\cF$-suspended operator $P\in \Psi^{m}_{\sus-\cF}(\pa X;E,F)$ induces
a continuous linear map 
\[
   P: H^{m+k}_{\sus-\cF}(\pa X;E) \to H^{k}_{\sus-\cF}(\pa X;F)
\]
for all $k\in \bbR$.
\label{sss.12}\end{proposition}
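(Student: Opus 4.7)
The plan is to transfer the bound from Proposition~\ref{sss.10} applied to a $\Gamma$-invariant lift. Given $P\in \Psi^{m}_{\sus-\cF}(\pa X;E,F)$, by Definition~\ref{symb.19} (extended to the suspended setting), there is a $\Gamma$-invariant $\widetilde{P}\in \Psi^{m}_{\sus-\Phi}(\pa \widetilde{M};E,F)$ with $r_{\psi}(\widetilde{P})=P$, characterized by
\begin{equation*}
   q_{\psi}^{*}(Pu) = \widetilde{P}(q_{\psi}^{*}u), \qquad u\in \cS({}^{\cF}N\pa X;E).
\end{equation*}
The natural target is to dominate $\|(\pi^{*}\varphi) q_{\psi}^{*}(Pu)\|_{H^{k}_{\sus-\Phi}(\pa\widetilde{M};F)}$ by a constant times $\|(\pi^{*}\varphi) q_{\psi}^{*}u\|_{H^{m+k}_{\sus-\Phi}(\pa\widetilde{M};E)}$, which is exactly $\|u\|_{H^{m+k}_{\sus-\cF}}$ by Definition~\ref{sss.11}.

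First, I decompose $q_{\psi}^{*}u$ using the partition of unity $\pi^{*}\varphi$ relative to $\Gamma$ on ${}^{\Phi}N\pa\widetilde{M}$:
\begin{equation*}
   q_{\psi}^{*}u = \sum_{\gamma\in\Gamma} \gamma^{*}\bigl[(\pi^{*}\varphi)\,q_{\psi}^{*}u\bigr],
\end{equation*}
noting that each summand is Schwartz (compactly supported in the base direction, Schwartz in the fibre of ${}^{\Phi}N\pa\widetilde{M}\to \pa\widetilde{M}$), so lies in $H^{m+k}_{\sus-\Phi}(\pa\widetilde{M};E)$. Applying $\widetilde{P}$ and using its $\Gamma$-equivariance (the suspended analogue of \eqref{tfb.15}) gives
\begin{equation*}
   \widetilde{P}(q_{\psi}^{*}u) = \sum_{\gamma\in\Gamma} \gamma^{*}\widetilde{P}\bigl((\pi^{*}\varphi)\,q_{\psi}^{*}u\bigr),
\end{equation*}
so that $(\pi^{*}\varphi) q_{\psi}^{*}(Pu) = (\pi^{*}\varphi)\sum_{\gamma} \gamma^{*}\widetilde{P}\bigl((\pi^{*}\varphi)\,q_{\psi}^{*}u\bigr)$.

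The key step is to see that only finitely many $\gamma$ produce terms whose support meets $\supp(\pi^{*}\varphi)$, with the cardinality independent of $u$. This is where I would invoke the compact support property inherited from condition (iii) in Definition~\ref{tfb.13}: the Schwartz kernel of the normal operator $\widetilde{P}$ descends to a compactly supported distribution on the quotient of the front face by $\Gamma$, which (upon Fourier transform in $NY$, fibrewise over $Y$) translates to the fact that the fibrewise convolution kernel of $\widetilde{P}$ has proper support modulo $\Gamma$ in $\pa\widetilde{M}\times_{Y}\pa\widetilde{M}$. Hence there is a finite subset $\Gamma_{0}\subset \Gamma$ such that $(\pi^{*}\varphi)\cdot \gamma^{*}\widetilde{P}((\pi^{*}\varphi)q_{\psi}^{*}u)\equiv 0$ for $\gamma\notin \Gamma_{0}$, uniformly in $u$.

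Combining these gives
\begin{equation*}
   \|(\pi^{*}\varphi)\,q_{\psi}^{*}(Pu)\|_{H^{k}_{\sus-\Phi}}
   \;\leq\; |\Gamma_{0}|\cdot C\,\bigl\|\widetilde{P}\bigl((\pi^{*}\varphi)\,q_{\psi}^{*}u\bigr)\bigr\|_{H^{k}_{\sus-\Phi}}
   \;\leq\; C'\,\bigl\|(\pi^{*}\varphi)\,q_{\psi}^{*}u\bigr\|_{H^{m+k}_{\sus-\Phi}}
   \;=\; C'\,\|u\|_{H^{m+k}_{\sus-\cF}},
\end{equation*}
where Proposition~\ref{sss.10} supplies the middle inequality and the constant $C$ bounds multiplication by $(\pi^{*}\varphi)$ on $H^{k}_{\sus-\Phi}$ (which follows by density and interpolation, since $\pi^{*}\varphi$ is smooth and bounded with all derivatives bounded in the fibre direction). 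Density of $\cS({}^{\cF}N\pa X;E)$ then yields the extension to all of $H^{m+k}_{\sus-\cF}(\pa X;E)$. The main obstacle is the uniform finiteness of $\Gamma_{0}$; apart from this, the argument is a straightforward averaging/lifting identical in spirit to the proof of Proposition~\ref{sob.3}.
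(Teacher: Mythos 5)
Your argument is correct and is essentially the argument the paper has in mind: the paper gives no proof here, asserting only that Proposition~\ref{sss.10} ``generalizes immediately'', and your lift-and-partition-of-unity-relative-to-$\Gamma$ derivation supplies the missing details in the same way the paper proves Proposition~\ref{sob.3}. The only implicit point worth making explicit is that the $H^{k}_{\sus-\Phi}$ norms must be chosen $\Gamma$-invariantly (e.g.\ built from the $\Gamma$-invariant data lifted from $\pa X$), so that $\|\gamma^{*}v\|=\|v\|$ in your final chain of inequalities.
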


\section{Fredholm Criterion} \label{fc.0}

If the normal operator $N_{\cF}(P)$ of a fully elliptic $\cF$-operator $P\in \Psi^{m}_{\cF}(X;E,F)$ is invertible, it is not always the case that $N(P)^{-1}$ is in $\Psi^{-m}_{\sus-\cF}(\pa X;F,E)$, but when it is, a parametrix can be constructed using the usual method.

\begin{theorem}[Parametrix construction] 
If $P\in \Psi^{m}_{\cF}(X;E,F)$ is elliptic with an invertible normal operator such that $N_{\cF}(P)^{-1}\in \Psi^{-m}_{\sus-\cF}(\pa X;F,E)$, then there exists $Q\in \Psi^{-m}_{\cF}(X;F,E)$ such that 
\[
      PQ-\Id_{F}\in \dot{\Psi}^{-\infty}_{\cF}(X;F), \quad 
      QP-\Id_{E}\in \dot{\Psi}^{-\infty}_{\cF}(X;E).
\]
In particular, for all $k,\ell\in\bbR$, the operator $P$ is Fredholm as a map
\[
     P: x^{\ell}H^{m+k}_{\cF}(X;E)\to x^{\ell}H^{k}_{\cF}(X;F).
\]
\label{pc.14}\end{theorem}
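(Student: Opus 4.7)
The plan is a standard Mazzeo--Melrose style parametrix construction, exploiting the two short exact sequences \eqref{symb.2b} and \eqref{symb.20}. First, by ellipticity and \eqref{symb.2b} I choose $Q_{0}\in \Psi^{-m}_{\cF}(X;F,E)$ with $\sigma_{-m}(Q_{0})=\sigma_{m}(P)^{-1}$. Using the surjectivity of the normal operator map in \eqref{symb.20} together with the standing hypothesis $N_{\cF}(P)^{-1}\in \Psi^{-m}_{\sus-\cF}(\pa X;F,E)$, I simultaneously arrange $N_{\cF}(Q_{0})=N_{\cF}(P)^{-1}$. Multiplicativity of both $\sigma$ and $N_{\cF}$ under composition (Theorem~\ref{tfb.22} and \eqref{fou.7}) then makes $R:=\Id_{F}-PQ_{0}$ an element of $\Psi^{-1}_{\cF}(X;F)$ with vanishing normal operator, so $R\in x\Psi^{-1}_{\cF}(X;F)$.

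Next I iterate by a Neumann-type asymptotic summation. Remark~\ref{conjugation.1} gives $x^{-\ell}\Psi^{s}_{\cF}x^{\ell}\subset \Psi^{s}_{\cF}$, which together with Theorem~\ref{tfb.22} yields the composition rule $x^{a}\Psi^{s}_{\cF}\circ x^{b}\Psi^{s'}_{\cF}\subset x^{a+b}\Psi^{s+s'}_{\cF}$. Inductively $R^{k}\in x^{k}\Psi^{-k}_{\cF}(X;F)$ and therefore $Q_{0}R^{k}\in x^{k}\Psi^{-m-k}_{\cF}(X;F,E)$. A Borel-type asymptotic summation, done kernel-wise on $\widetilde{M}^{2}_{\Phi}/\Gamma$ using the Mazzeo--Melrose machinery plus a partition of unity relative to $\Gamma$ as in Lemma~\ref{lift.3}, produces $Q\in \Psi^{-m}_{\cF}(X;F,E)$ with $Q\sim \sum_{k\ge 0}Q_{0}R^{k}$, meaning $Q-\sum_{k=0}^{N}Q_{0}R^{k}\in x^{N+1}\Psi^{-m-N-1}_{\cF}$ for every $N$. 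The telescoping identity $\sum_{k=0}^{N}PQ_{0}R^{k}=\Id_{F}-R^{N+1}$, combined with $R^{N+1}\in x^{N+1}\Psi^{-N-1}_{\cF}$, then forces $PQ-\Id_{F}\in \bigcap_{N}x^{N}\Psi^{-N}_{\cF}(X;F)$, which upon unwinding Definition~\ref{tfb.13} is precisely $\dot{\Psi}^{-\infty}_{\cF}(X;F)$.

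A mirror construction starting from the left (applicable since the hypothesis on $N_{\cF}(P)^{-1}$ is symmetric) yields $Q'\in \Psi^{-m}_{\cF}(X;F,E)$ with $Q'P-\Id_{E}\in \dot{\Psi}^{-\infty}_{\cF}(X;E)$, and the standard identity
\begin{equation*}
Q-Q' \;=\; Q'(PQ-\Id_{F})-(Q'P-\Id_{E})Q
\end{equation*}
together with the fact that $\dot{\Psi}^{-\infty}_{\cF}$ absorbs composition with $\Psi^{-m}_{\cF}$ on either side shows $Q'\equiv Q$ modulo $\dot{\Psi}^{-\infty}_{\cF}(X;F,E)$; hence $Q$ is a two-sided parametrix. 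For the Fredholm conclusion, conjugation by $x^{\ell}$ (using Remark~\ref{conjugation.1} again) reduces to the unweighted case, after which Proposition~\ref{sob.3} gives the mapping property and Theorem~\ref{mp.6} implies that the residual operators $PQ-\Id_{F}$ and $QP-\Id_{E}$ are compact on the relevant $\cF$-Sobolev spaces, since elements of $\dot{\Psi}^{-\infty}_{\cF}$ have vanishing normal operator.

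The main technical point I expect to require care is the identification $\bigcap_{N}x^{N}\Psi^{-N}_{\cF}(X;F)=\dot{\Psi}^{-\infty}_{\cF}(X;F)$ and the execution of the Borel-type summation at the level of Schwartz kernels on the quotient space $\widetilde{M}^{2}_{\Phi}/\Gamma$: because the fibred-cusp results of \cite{Mazzeo-Melrose} are stated in a compact setting, the $\Gamma$-invariance and the compact support condition \textit{(iii)} of Definition~\ref{tfb.13} must be preserved throughout, which is where the partition of unity relative to $\Gamma$ of Definition~\ref{lift.1} will do the work. Once this bookkeeping is set up, the rest of the argument is a direct transcription of the parametrix construction of Mazzeo and Melrose.
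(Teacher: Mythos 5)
Your proposal is correct and follows essentially the same route as the paper: both arguments choose a first approximation $Q_{0}$ with $\sigma_{-m}(Q_{0})=\sigma_{m}(P)^{-1}$ and $N_{\cF}(Q_{0})=N_{\cF}(P)^{-1}$, iteratively improve the error in powers of $x$ and order, asymptotically sum, and then identify left and right parametrices via the standard telescoping identity. The only (cosmetic) difference is that you package the iteration as a Neumann series $Q\sim\sum_{k}Q_{0}R^{k}$, whereas the paper corrects the error at each stage by prescribing the symbol and normal operator of the next term $Q_{k+1}$ directly; these are equivalent given the multiplicativity of $\sigma_{m}$ and $N_{\cF}$.
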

\begin{proof}
Since $P$ is elliptic and $N_{\cF}(P)^{-1}\in \Psi^{-m}_{\sus-\cF}(\pa X;F,E)$, we can find
$Q_{1}\in \Psi^{-m}_{\cF}(X;F,E)$ such that $N_{\cF}(Q)=N_{\cF}(P)^{-1}$ and 
$\sigma_{-m}(Q)= \sigma_{m}(P)^{-1}$.  Thus,
\[
   Q_{1}P-\Id_{E} = R_{1} \in x\Psi^{-1}_{\cF}(X;E,F).
\] 
Suppose we can also find $Q_{j} \in x^{j-1}\Psi^{-m-j+1}_{\cF}(X;F,E)$ for
$j\le k$ such that 
\begin{equation}
  (Q_{1}+\cdots+ Q_{k}) P -\Id_{E}= R_{k} \in x^{k}\Psi^{-k}_{\cF}(X;E).
\label{pc.15}\end{equation}
This can be improved at the next level provided we can find $Q_{k+1}\in x^{k}\Psi^{-m-k}_{\cF}(X;F,E)$ such that
\[
    Q_{k+1}P \equiv R_{k} \; \mod \; x^{k+1}\Psi^{-k-1}_{\cF}(X;E). 
\]
For this, it suffices to take $Q_{k+1}$ such that 
\[
   N_{\cF}(x^{-k}Q_{k+1})= N_{\cF}(x^{-k}R_{k})N_{\cF}(P)^{-1}, 
   \quad \sigma_{-m-k}(x^{-k}Q_{k})= \sigma_{-k}(x^{-k}R_{k})\sigma_{m}(P)^{-1}.
\]
Proceeding recursively, we can therefore find $Q_{j}\in x^{j-1}\Psi^{-m-j+1}_{\cF}(X;F,E)$ such that \eqref{pc.15} holds for all $k\in \bbN$.  Thus, taking
$Q\in \Psi^{-m}_{\cF}(X;F,E)$ to be some asymptotic sum of the $Q_{j}$, we 
get a left parametrix
\[
  QP-\Id_{E}\in \dot{\Psi}^{-\infty}_{\cF}(X;E).
\]
We can proceed in a similar way to construct a right parametrix $Q'$.  Clearly
we will have that $Q-Q'\in \dot{\Psi}^{-\infty}_{\cF}(X;F,E)$ so that $Q$ is
also a right parametrix.
\end{proof}
\begin{remark}
If the foliation $\cF$ satisfies Assumption~\ref{tfb.2b} except for the requirement that the action of $\Gamma$ on $Y$ is locally free, then the conclusion of Theorem~\ref{pc.14} still holds if instead $P\in \Psi^m_{\cF}(X;E,F)$ is elliptic of the form 
\[
        P = \pi_c \circ R_{q_c}(P_1) + P_2, \quad P_1\in \Psi^m_{\Phi,\Gamma}(\widetilde{M};E,F), \; P_2\in \dot{\Psi}^m(X),
\]
with $N_{\Phi}(P_1)$ invertible with inverse $N_{\Phi}(P_1)^{-1}$ in $\Psi^{-m}_{\sus-\Phi}(\pa\widetilde{M};F,E)$.  The proof is similar but uses $N_{\Phi}(P_1)^{-1}$ instead of $N_{\cF}(P)^{-1}$.   
\label{lcfr.1}\end{remark}

\begin{corollary}
If $P\in \Psi^{m}_{\cF}(X;E,F)$ is elliptic with an invertible normal operator such that $N_{\cF}(P)^{-1}\in \Psi^{-m}_{\sus-\cF}(\pa X;F,E)$  and $P$ is invertible as a map $P: \CI(X;E)\to \CI(X;F)$, then its inverse is an element of $\Psi^{-m}_{\cF}(X;F,E)$.
\label{pc.16}\end{corollary}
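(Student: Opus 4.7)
The plan is to invoke the parametrix construction of Theorem~\ref{pc.14}, verify that $P^{-1}$ preserves $\dot{\mathcal{C}}^{\infty}$, and then show that $P^{-1}$ differs from the parametrix by a genuinely smoothing operator in $\dot{\Psi}^{-\infty}_{\cF}$.

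First, apply Theorem~\ref{pc.14} to produce a two-sided parametrix $Q\in\Psi^{-m}_{\cF}(X;F,E)$ satisfying $QP-\Id_{E}=R_{L}\in\dot{\Psi}^{-\infty}_{\cF}(X;E)$ and $PQ-\Id_{F}=R_{R}\in\dot{\Psi}^{-\infty}_{\cF}(X;F)$. For $f\in\dot{\mathcal{C}}^{\infty}(X;F)$, set $u=P^{-1}f\in\CI(X;E)$; the left-parametrix identity gives $u=Qf-R_{L}u$, and since $Q$ sends $\dot{\mathcal{C}}^{\infty}$ to $\dot{\mathcal{C}}^{\infty}$ (being an $\cF$-operator, see \eqref{tfb.18}) and $R_{L}$ has Schwartz kernel vanishing to infinite order at $\pa X$ (so that $R_{L}u\in\dot{\mathcal{C}}^{\infty}(X;E)$), one concludes that $P^{-1}$ maps $\dot{\mathcal{C}}^{\infty}(X;F)$ into $\dot{\mathcal{C}}^{\infty}(X;E)$.

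Applying $P^{-1}$ on the left of $PQ=\Id_{F}+R_{R}$ yields $P^{-1}=Q-P^{-1}R_{R}$, so it suffices to prove $P^{-1}R_{R}\in\dot{\Psi}^{-\infty}_{\cF}(X;F,E)$, as then
\[
P^{-1}\in\Psi^{-m}_{\cF}(X;F,E)+\dot{\Psi}^{-\infty}_{\cF}(X;F,E)=\Psi^{-m}_{\cF}(X;F,E).
\]
The Schwartz kernel of $P^{-1}R_{R}$ is $k_{P^{-1}R_{R}}(x,y)=P^{-1}(k_{R_{R}}(\cdot,y))(x)$ with $P^{-1}$ acting in the first variable. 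For each fixed $y$, the slice $k_{R_{R}}(\cdot,y)$ lies in $\dot{\mathcal{C}}^{\infty}(X;F)$, so the previous step yields infinite order vanishing in $x$ at $\pa X$. For the vanishing in $y$, use that $k_{R_{R}}$ vanishes to infinite order at $y\in\pa X$: for every $N$ one can factor $k_{R_{R}}(x,y)=y^{N}\tilde{k}_{N}(x,y)$ with $\tilde{k}_{N}$ smooth, so that $k_{P^{-1}R_{R}}(x,y)=y^{N}(P^{-1}\tilde{k}_{N}(\cdot,y))(x)$ vanishes to order $N$ at $y=0$ for every $N$.

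The main obstacle is verifying joint smoothness of $k_{P^{-1}R_{R}}$ in $(x,y)$. The closed graph theorem makes $P^{-1}\colon\CI(X;F)\to\CI(X;E)$ a continuous map of Fr\'echet spaces; since $y\mapsto\tilde{k}_{N}(\cdot,y)$ is a smooth map from $X$ to $\CI(X;F)$, the composite $y\mapsto P^{-1}(\tilde{k}_{N}(\cdot,y))$ is smooth into $\CI(X;E)$, yielding joint smoothness in $(x,y)$. Combined with the infinite order vanishing at both boundary hypersurfaces, this places $P^{-1}R_{R}$ in $\dot{\Psi}^{-\infty}_{\cF}(X;F,E)$ and finishes the argument.
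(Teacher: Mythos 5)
Your argument is correct and follows essentially the paper's own route: take the parametrix $Q$ from Theorem~\ref{pc.14} and use the resulting algebraic identity to write $P^{-1}$ as $Q$ plus a correction that must be shown to lie in $\dot{\Psi}^{-\infty}_{\cF}(X;F,E)$. The only difference is cosmetic but welcome: the paper uses the left identity, writing $P^{-1}=Q-RP^{-1}$ and dismissing $RP^{-1}\in\dot{\Psi}^{-\infty}_{\cF}$ with the one-line remark that $\dot{\Psi}^{-\infty}_{\cF}(X)$ is an ideal, whereas you use the right identity $P^{-1}=Q-P^{-1}R_{R}$ (the side on which the kernel computation is direct) and actually justify that step — preservation of $\dot{\mathcal{C}}^{\infty}$ by $P^{-1}$, infinite-order vanishing in each variable, and joint smoothness via continuity of $P^{-1}$ on the Fr\'echet space $\CI(X;F)$.
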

\begin{proof}
Let $Q\in \Psi^{-m}(X;F,E)$ be a parametrix for $P$ as in Theorem~\ref{pc.14} so that 
\[
           QP-\Id_{E} = R\in \dot{\Psi}^{-\infty}_{\cF}(X;E).
\]
Composing on the right with $P^{-1}$ we get
\begin{equation}
            P^{-1}= Q -RP^{-1}.
\label{pc.17}\end{equation}
Since $\dot{\Psi}^{-\infty}_{\cF}(X)$ is an ideal, $RP^{-1}$ is in
$\dot{\Psi}^{-\infty}_{\cF}(X;F,E)$ and the result follows from \eqref{pc.17}.
\end{proof}

A particular case where Theorem~\ref{pc.14} can be applied systematically is when the foliation $\cF$ can be resolved by a fibration having compact fibres.

\begin{corollary}
If the foliation $\cF$ on $\pa X$ satisfies Assumption~\ref{tfb.2b} with $\pa\tM$ a compact manifold, then for any fully elliptic operator $P\in \Psi^{m}_{\cF}(X;E,F)$, there exists an operator $Q\in \Psi^{m}_{\cF}(X;E,F)$ such that
  \[
      PQ-\Id_{F}\in \dot{\Psi}^{-\infty}_{\cF}(X;F), \quad 
      QP-\Id_{E}\in \dot{\Psi}^{-\infty}_{\cF}(X;E).
\]\label{pcf.1}\end{corollary}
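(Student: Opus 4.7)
The plan is to reduce this to a direct application of Theorem~\ref{pc.14}. Since $P$ is fully elliptic, by definition both the principal symbol $\sigma_{m}(P)$ and the normal operator $N_{\cF}(P)$ are invertible. Thus the only hypothesis of Theorem~\ref{pc.14} that needs verification is the membership
\[
   N_{\cF}(P)^{-1}\in \Psi^{-m}_{\sus-\cF}(\pa X;F,E),
\]
after which the parametrix $Q$ is produced by the recursive construction in the proof of that theorem.

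To verify this, I would pass to the resolution. Write $P=\pi_{c}\circ R_{q_{c}}(P_{1})+P_{2}$ with $P_{1}\in \Psi^{m}_{\Phi,\Gamma}(\widetilde{M};\tE,\tF)$ and $P_{2}\in \dot{\Psi}^{m}(X;E,F)$. By the faithfulness of $r_{\psi}$ established in Lemma~\ref{faith.1}, the invertibility of $N_{\cF}(P)$ is equivalent to the invertibility of $N_{\Phi}(P_{1})$. Under the identification \eqref{fou.1}--\eqref{fou.5}, $N_{\Phi}(P_{1})$ is a smooth $\Gamma$-equivariant family, parametrized by $y\in Y$, of $N_{y}Y$-suspended elliptic operators on the fibre $\tZ_{y}=\Phi^{-1}(y)$, which is a \emph{compact} manifold by the assumption that $\pa\tM$ is compact.

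The key input is then the following standard fact about the suspended calculus on compact manifolds: if $W$ is closed and $A\in \Psi^{m}_{\sus(V)}(W;\tE,\tF)$ is fully elliptic and invertible on suitable Sobolev spaces, then $A^{-1}\in \Psi^{-m}_{\sus(V)}(W;\tF,\tE)$. This follows from the parametrix construction inside the suspended calculus: ellipticity and invertibility of the symbol at infinity (in both the fibrewise cotangent and the suspension directions) produce a parametrix with error in $\Psi^{-\infty}_{\sus(V)}(W)$, and since $\dot{\Psi}^{-\infty}_{\sus(V)}(W)$ is an ideal, formula \eqref{pc.17} applied in this setting shows the true inverse lies in the calculus. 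Applying this fibrewise, with smoothness and uniformity in $y\in Y$ inherited from $N_{\Phi}(P_{1})$, yields $N_{\Phi}(P_{1})^{-1}\in \Psi^{-m}_{\sus-\Phi}(\pa\widetilde{M};\tF,\tE)$; by uniqueness of inverses this lift is automatically $\Gamma$-invariant, hence descends under $r_{\psi}$ to an element of $\Psi^{-m}_{\sus-\cF}(\pa X;F,E)$.

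The main obstacle is the fibrewise inversion step in the suspended calculus: one must check that invertibility really does force the inverse into the suspended calculus and that the dependence on $y\in Y$ remains smooth (and uniform enough to produce an element of $\Psi^{-m}_{\sus-\Phi}$). The compactness of $\tZ_{y}$ is what makes this work, since it allows the standard asymptotic-summation/residual-ideal argument to close up fibrewise without residual non-compact ends in the fibre direction; the suspension variable is handled by full ellipticity at infinity in $V$. Once this is in place, Theorem~\ref{pc.14} delivers $Q$ directly.
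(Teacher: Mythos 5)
Your overall strategy is the same as the paper's: reduce to Theorem~\ref{pc.14} by showing $N_{\cF}(P)^{-1}\in \Psi^{-m}_{\sus-\cF}(\pa X;F,E)$, and obtain this by lifting to $\pa\tM$, where compactness of the fibres lets the standard suspended calculus close up. The second half of your argument is fine and matches what the paper asserts: once $N_{\Phi}(P_{1})$ is known to be \emph{invertible}, its inverse lies in $\Psi^{-m}_{\sus-\Phi}(\pa\tM;\tF,\tE)$ by the Mazzeo--Melrose theory on the compact fibration, is $\Gamma$-invariant by uniqueness of inverses, and descends via $r_{\psi}$.

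The gap is the sentence claiming that, by the faithfulness of $r_{\psi}$ (Lemma~\ref{faith.1}), invertibility of $N_{\cF}(P)$ is \emph{equivalent} to invertibility of $N_{\Phi}(P_{1})$. Full ellipticity only gives invertibility of $N_{\cF}(P)$ on the quotient Sobolev spaces $H^{k}_{\sus-\cF}(\pa X;\cdot)$, i.e.\ of $N_{\Phi}(P_{1})$ restricted to the ($\Gamma$-invariant) sections pulled back from the quotient; what you need is invertibility on all of $H^{k}_{\sus-\Phi}(\pa\tM;\cdot)$. Injectivity of an algebra homomorphism does not transfer invertibility backwards: a $\Gamma$-equivariant operator can act invertibly on the invariant subspace while failing to be invertible (consider a projection), so some genuine input beyond Lemma~\ref{faith.1} is required. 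The paper supplies it with a $C^{*}$-algebra argument: after doubling to arrange $E=F$ and composing with an invertible $\Gamma$-invariant elliptic suspended operator to reduce to order $0$ (both possible because $\Gamma$ is finite, which itself follows from compactness of $\pa\tM$ together with the free, properly discontinuous action), one takes norm closures of the images of the continuous injections $\iota_{\Phi}$ and $\iota_{\cF}$ into the bounded operators on the suspended Sobolev spaces, checks that $r_{\psi}$ extends to an injective morphism $\overline{r}_{\psi}$ of the resulting $C^{*}$-algebras (this requires rerunning the proof of Lemma~\ref{faith.1} on the closures, not just quoting it), and then invokes spectral permanence for injective unital $C^{*}$-morphisms. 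Without this step, or some substitute for it, the reduction from invertibility of $N_{\cF}(P)$ to invertibility of $N_{\Phi}(P_{1})$ does not go through.
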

\begin{proof}
Since $\pa \tM$ is compact and $\Gamma$ acts freely and properly discontinuously on $\pa\tM$, notice that $\Gamma$ has to be a finite group.  
According to Theorem~\ref{pc.14}, it suffices to show that $N_{\cF}(P)$ has an inverse in $\Psi^{-m}_{\sus-\cF}(\pa X;E,F)$.  Replacing $P$ by 
\[
   \left(\begin{array}{cc}
       P & 0 \\
       0 & P^{*} 
   \end{array} \right)
\]
if needed, we can assume that $E=F$.   

To show that   $N_{\cF}(P)^{-1}\in\Psi^{-m}_{\sus-\cF}(\pa X;E)$, it is enough to prove that 
$N_{\Phi}(\tP)$ is invertible, where $\tP\in \Psi^{m}_{\Phi,\Gamma}(\tM;E)$ is such that $r_{\psi}(N_{\Phi}(\tP))=N_{\cF}(P)$, since then $N_{\Phi}(\tP)^{-1}\in \Psi^{-m}_{\sus-\Phi;\Gamma}(\pa \tM;E)$ automatically.  To do this, we can reduce to the case $m=0$ by replacing  $N_{\Phi}(\tP)$ with $\tQ N_{\Phi}(\tP)$, where $\tQ\in \Psi^{-m}_{\sus-\Phi,\Gamma}(\pa \tM;E)$ is elliptic and invertible with $\Psi^{-m}_{\sus-\Phi,\Gamma}(\pa\tM;E)$ denoting the subspace of $\Gamma$-invariant $\Phi$-suspended operators in $\Psi^{-m}_{\sus-\Phi}(\pa \tM;E)$.  Since $\Gamma$ is finite, such an operator $\tQ$ is easy to construct by averaging over $\Gamma$.  

Thus, we can assume $N_{\Phi}(\tP)\in \Psi^{0}_{\sus-\Phi,\Gamma}(\pa\tM;E)$.
By Proposition~\ref{sss.12}, we have injective maps
\begin{equation}
\begin{gathered}
\iota_{\cF}: \Psi_{\sus-\cF}^{0}(\pa X;E) \to \cL(H^{0}_{\sus-\cF}(\pa X;E)), \\
\iota_{\Phi}: \Psi_{\sus-\Phi,\Gamma}^{0}(\pa \tM;E) \to \cL(H^{0}_{\sus-\Phi}(\pa \tM;E)). \end{gathered}
\label{pcf.2}\end{equation}      
These maps are in fact continuous as it can be easily shown.  Indeed, for any $u\in \cS({}^{\cF}N\pa X;E)$, the map
\[
        \Psi^{0}_{\sus-\cF}(\pa X;E) \ni A \mapsto \|Au\|_{H^{0}_{\cF-\sus}(\pa X;E)}
\]
is continuous.   Since the strong operator topology is weaker than the norm topology, this means the graph of $\iota_{\cF}$ is also closed when we use the norm topology on $ \cL(H^{0}_{\sus-\cF}(\pa X;E))$.  By the closed graph theorem, this means the map $\iota_{\cF}$ is continuous.  The proof that $\iota_{\Phi}$ is continuous is similar.

  Let $\cP^{0}_{\sus-\cF}(\pa X;E)$ and $\cP^{0}_{\sus-\Phi,\Gamma}(\pa \tM;E)$ denote the images of these maps and let $\overline{\cP}^{0}_{\sus-\cF}(\pa X;E)$ and $\overline{\cP}^{0}_{\sus-\Phi,\Gamma}(\pa \tM;E)$ be their closure with respect to the norm topology.  The map
$r_{\psi}: \Psi^{0}_{\sus-\Phi,\Gamma}(\pa \tM;E)\to \Psi^{0}_{\sus-\cF}(\pa X, E)$ naturally extends to give a map of $C^{*}$-algebras
\[
     \overline{r}_{\psi}: \overline{\cP}^{0}_{\sus-\Phi,\Gamma}(\pa \tM;E) \to
         \overline{\cP}^{0}_{\sus-\cF}(\pa X;E).
\]  
By Lemma~\ref{faith.2}, the map $r_{\psi}$ is injective.  In fact, the proof of this lemma generalizes directly to show that $\overline{r}_{\psi}$ is an injective map of $C^{*}$-algebras.  By a standard result (see for instance Proposition 1.3.10 in \cite{Dixmier}), we know then that an element in $ \overline{\cP}^{0}_{\sus-\Phi,\Gamma}(\pa \tM;E)$ is invertible if and only if it is invertible in  
$\overline{\cP}^{0}_{\sus-\cF}(\pa X;E)$.  In particular, if $N_{\cF}(P)$ is invertible, then so is $N_{\Phi}(\tP)$, which completes the proof.    
\end{proof}

It is still possible to get a precise Fredholm criterion when Theorem~\ref{pc.14} does not apply.  The proof of the following proposition is  inspired by the approach of Lauter, Monthubert and Nistor (Theorem 4 in \cite{Lauter-Monthubert-Nistor}) to get Fredholm criteria for algebras of pseudodifferential operators defined on groupoids.  
\begin{proposition}
A classical (or polyhomogeneous) $\cF$-operator $P\in \Psi^{m}_{\cF-\ph}(X;E,F)$ induces a Fredholm operator 
\[
      P: H^{m+k}_{\cF}(X;E)\to H^{k}_{\cF}(X;F)
\]
if and only if $P$ is elliptic and $N_{\cF}(P)$ is invertible as a map
\[
   N_{\cF}(P): H^{m+k}_{\sus-\cF}(\pa X;E) \to H^{k}_{\sus-\cF}(\pa X;F).
\]
\label{fc.1}\end{proposition}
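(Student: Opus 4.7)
The plan is to adapt the $C^*$-algebraic approach used in the proof of Corollary~\ref{pcf.1}, inspired by \cite{Lauter-Monthubert-Nistor}. First I would reduce to the case $m=k=0$ by composing $P$ on the left and right with invertible elliptic $\cF$-operators of the appropriate orders (for instance suitable powers of the operator $D_m$ from \eqref{sob.4a} and its $\cF$-suspended analogue on $\pa X$); such compositions preserve ellipticity, invertibility of the normal operator on the corresponding Sobolev spaces, and the Fredholm property. The forward direction is then standard: if $P\in\Psi^0_{\cF-\ph}(X;E,F)$ is Fredholm on $L^2_\cF$, ellipticity follows from Atkinson's theorem applied to the principal symbol sequence \eqref{symb.2b}. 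Invertibility of $N_\cF(P)$ on $H^0_{\sus-\cF}(\pa X;E)$ follows from a Weyl sequence argument: translation invariance of $N_\cF(P)$ in the $NY$-directions allows one to lift any singular sequence for $N_\cF(P)$ to a bounded sequence in $L^2_\cF(X;E)$ whose supports recede to infinity along $\pa X$, exactly as in the second half of the proof of Theorem~\ref{mp.6}.

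For the backward direction, introduce the norm closures $\overline{\cP}^0_\cF(X;E)\subset\cL(L^2_\cF(X;E))$ and $\overline{\cP}^0_{\sus-\cF}(\pa X;E)\subset\cL(H^0_{\sus-\cF}(\pa X;E))$ as unital $C^*$-algebras. The principal symbol and the normal operator extend by continuity to $*$-homomorphisms
\[
\overline{\sigma}: \overline{\cP}^0_\cF(X;E) \to C(S({}^{\cF}T^*X);\End E), \quad
\overline{N}: \overline{\cP}^0_\cF(X;E) \to \overline{\cP}^0_{\sus-\cF}(\pa X;E),
\]
the continuity of $\overline{N}$ following from a closed graph argument along the lines of the one used in the proof of Corollary~\ref{pcf.1}. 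The heart of the proof is to establish that the joint kernel of these two maps equals the compact ideal $\cK(L^2_\cF(X;E))$. For polyhomogeneous operators Theorem~\ref{mp.6} already supplies this: combining the short exact sequences \eqref{symb.2b} and \eqref{symb.20}, an operator in $\Psi^0_{\cF-\ph}$ with vanishing principal symbol and vanishing normal operator lies in $x\Psi^{-1}_{\cF-\ph}$ and is therefore compact on $L^2_\cF$.

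Extending this identification from the dense subalgebra $\Psi^0_{\cF-\ph}(X;E)$ to the full $C^*$-closure is the main obstacle; I would handle it by the standard approximation argument used in the groupoid approach of \cite{Lauter-Monthubert-Nistor}, approximating an arbitrary element of $\ker\overline{\sigma}\cap\ker\overline{N}$ by polyhomogeneous operators whose symbol and normal operator tend to zero in norm, so that the approximants are compact and compactness is inherited by the limit. Once this identification is in place the conclusion is immediate: ellipticity gives invertibility of $\overline{\sigma}(P)$ in $C(S({}^{\cF}T^*X);\End E)$; the hypothesis that $N_\cF(P)$ is invertible on $H^0_{\sus-\cF}(\pa X;E)$, combined with spectral invariance of unital $C^*$-subalgebras of $\cL(H)$ (Proposition~1.3.10 of \cite{Dixmier}), gives invertibility of $\overline{N}(P)$ in $\overline{\cP}^0_{\sus-\cF}(\pa X;E)$. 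Therefore the image of $P$ in $\overline{\cP}^0_\cF(X;E)/\cK$ is invertible, and Atkinson's theorem yields that $P$ is Fredholm on $L^2_\cF(X;E)$.
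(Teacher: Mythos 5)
Your overall architecture coincides with the paper's: norm-close the polyhomogeneous operators inside the bounded operators, extend $\sigma_m$ and $N_{\cF}$ by continuity (via the closed graph theorem), identify the joint kernel of the extended maps with the compact operators using Theorem~\ref{mp.6}, and conclude by spectral invariance of unital $C^*$-subalgebras (Proposition~1.3.10 of \cite{Dixmier}) together with Atkinson's theorem. Your separate Weyl-sequence treatment of the forward direction is fine but redundant, since the two-sided $C^*$-algebraic statement already yields both implications at once. Your honest flagging of the passage from the dense polyhomogeneous subalgebra to its closure is also appropriate, though note that approximants $P_j$ with small but nonvanishing symbol and normal operator are not themselves compact; one must subtract a quantization $Q_j$ with $\sigma(Q_j)=\sigma(P_j)$, $N_{\cF}(Q_j)=N_{\cF}(P_j)$ and $\|Q_j\|\to 0$, so that $P_j-Q_j$ is compact and converges to the given element.

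The genuine gap is your opening reduction to $m=k=0$. For $m>0$ there is no invertible elliptic operator of order $-m$ available in $\Psi^{-m}_{\cF-\ph}(X;E)$ to compose with: the natural candidate $D_m^{-1}$ belongs to the calculus only when $N_{\cF}(D_m)^{-1}\in\Psi^{-m}_{\sus-\cF}(\pa X;E)$, and the paper emphasizes (see the discussion preceding Theorem~\ref{pc.14} and Corollary~\ref{pc.16}) that inverses of fully elliptic $\cF$-operators need \emph{not} lie in the calculus --- this failure is precisely what distinguishes the foliated from the fibred setting. So ``suitable powers of $D_m$'' with negative exponent do not exist inside $\Psi^{*}_{\cF-\ph}$, and the reduced operator $PD_m^{-1}$ need not be an order-zero $\cF$-operator, which breaks the subsequent $C^*$-argument. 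The paper circumvents this in Lemma~\ref{cstar.1} by keeping $m$ and $k$ general and working with $2\times 2$ matrices such as $\bigl(\begin{smallmatrix} 0 & \overline{D}_{-m} \\ P & 0 \end{smallmatrix}\bigr)$ acting on $H^{m+k}_{\cF}\oplus H^{k}_{\cF}$, forming a $C^*$-algebra of such matrices and, when $m>0$, adjoining $\overline{D}_m^{-1}$ as an abstract bounded operator rather than as a pseudodifferential one. You would need this device (or an equivalent one) to make your reduction legitimate; the same caveat applies to normalizing $k$ to $0$.
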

\begin{proof}
Considering instead the operator
\[
\left( \begin{array}{cc} 0 & P^{*} \\ P & 0 \end{array}\right): H^{m+k}_{\cF}(X;E\oplus F)\to H^{k}_{\cF}(X;E\oplus F) , 
\]
we can reduce to the case where $E=F$ and $P$ is formally self-adjoint.
By Proposition~\ref{sob.3}, there is an injective map
\begin{equation}
  \iota_{m,k}: \Psi^{m}_{\cF-\ph}(X;E)\hookrightarrow \cL( H^{m+k}_{\cF}(X;E),
   H^{k}_{\cF}(X;E)).
\label{fc.2}\end{equation}
It is in fact continuous with respect to the norm topology.  Indeed, 
for any $u\in \dot{\mathcal{C}}^{\infty}(X;E)$, $v\in \dot{\mathcal{C}}^{\infty}(X;E)$, the map
\[
         \Psi^{m}_{\cF-ph}(X;E)\ni A \mapsto \langle Au, v\rangle_{L^{2}}
\]
is continuous.  Since the topology induced by the semi-norms 
$A\mapsto |\langle Au, v\rangle_{L^{2}}|$ is weaker than the norm topology,
this implies the graph of \eqref{fc.2} is closed for the product topology (using the norm topology for the space of bounded operators).  Thus, by the closed graph theorem, the map \eqref{fc.2} is continuous.

 Let $\cP^{m}_{k}(X;E)$ be the image of this map and $\overline{\cP}^{m}_{k}(X;E)$ its closure in the space $\cL( H^{m+k}_{\cF}(X;E),
   H^{k}_{\cF}(X;E))$.  Notice the principal symbol defines a continuous map 
\begin{equation}
   \sigma_{m}: \Psi^{m}_{\cF-\ph}(X;E) \to \CI( {}^{\cF}S^{*}X; \Lambda^{m}\otimes
 \hom(E))
\label{fc.3}\end{equation}  
where ${}^{\cF}S^{*}X$ is the cosphere bundle of ${}^{\cF}TX$.  If we use the
$\cC^{0}$-norm on $\CI({}^{\cF}S^{*}X;\Lambda^{m}\otimes \hom(E))$, this extends to a continuous map
\begin{equation}
  \overline{\sigma}_{m}: \overline{\cP}^{m}_{k}(X;E)\to \cC^{0}({}^{\cF}S^{*}X;\Lambda^{m}\otimes \hom(E))
\label{fc.4}\end{equation}
 Similarly, the normal operator induces a continuous linear map
 \begin{equation}
  \overline{N}_{\cF}: \overline{\cP}^{m}_{k}(X;E) \to 
   \cL( H^{m+k}_{\sus-\cF}(\pa X;E), H^{k}_{\sus-\cF}(\pa X;E)).
 \label{fc.5}\end{equation}
 By the compactness criterion of Theorem~\ref{mp.6}, these maps induce
 a continuous injective map
 \begin{multline}
  \overline{\cP}^{m}_{k}(X;E)/ \cK^{m}_{k} \to  \\
   \cC^{0}({}^{\cF}S^{*}X;\Lambda^{m}\otimes\hom(E)) 
  \oplus \cL( H^{m+k}_{\sus-\cF}(\pa X;E), H^{k}_{\sus-\cF}(\pa X;E)) 
 \label{fc.6}\end{multline}
 where $\cK_{k}^{m} \subset \overline{\cP}^{m}_{k}(X;E)$ is the subspace of
 compact operators.  When $m=0$, this is an injective map of $C^{*}$-algebras mapping the identity to the identity.  For such a map, it is a standard fact (see for instance Proposition~1.3.10 in \cite{Dixmier}) that an element of the initial $C^{*}$-algebra is invertible if and only if its image is invertible in the other $C^{*}$-algebra.
In particular, for $m=0$, an operator $P\in \Psi^{m}_{\cF-\ph}(X;E)\subset \overline{\cP}^{m}_{k}(X;E)$ has an inverse 
 $Q$ in the space  $\cL(H^{k}_{\cF}(X;E), H^{m+k}_{\cF}(X;E))$ modulo compact operators if 
 and only if $(\overline{\sigma}_{m}\oplus \overline{N}_{\cF})(P)$ has an inverse in 
\[\cC^{0}({}^{\cF}S^{*}X;\Lambda^{-m}\otimes\hom(E))\oplus \cL(H^{k}_{\sus-\cF}(\pa X;E), H^{m+k}_{\sus-\cF}(\pa X;E)).  \]
 As Lemma~\ref{cstar.1} below shows, this is still true when $m\ne 0$.
   Since $P$ induces a Fredholm operator
 \[
      P: H^{m+k}_{\cF}(X;E)\to H^{k}_{\cF}(X;E)
 \]
 if and only if it has an inverse $Q\in \cL(H^{k}_{\cF}(X;E), H^{m+k}_{\cF}(X;E))$ modulo compact operators, the result follows.
  \end{proof}
\begin{lemma}
For $m\in\bbR$, an operator $P\in \Psi^{m}_{\cF-\ph}(X;E)\subset \overline{\cP}^{m}_{k}(X;E)$ has an inverse 
 $Q$ in the space  $\cL(H^{k}_{\cF}(X;E), H^{m+k}_{\cF}(X;E))$ modulo compact operator if 
 and only if $(\overline{\sigma}_{m}\oplus \overline{N}_{\cF})(P)$ has an inverse in 
\[\cC^{0}({}^{\cF}S^{*}X;\Lambda^{-m}\otimes\hom(E))\oplus \cL(H^{k}_{\sus-\cF}(\pa X;E), H^{m+k}_{\sus-\cF}(\pa X;E)).  \]
\label{cstar.1}\end{lemma}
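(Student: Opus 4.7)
The plan is to reduce the lemma to the $m=0$ case, which was already settled inside the proof of Proposition~\ref{fc.1} via the standard $C^{*}$-algebraic fact that an injective unital $*$-homomorphism of $C^{*}$-algebras preserves and reflects invertibility. The reduction is effected by composing with a fixed invertible elliptic order-$m$ operator and transferring both the Fredholm-modulo-compacts property and the invertibility of the symbol-plus-normal-operator pair through that composition.

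First I would construct an auxiliary operator $\Lambda\in \Psi^{m}_{\cF-\ph}(X;E)$ which is elliptic and whose normal operator is invertible with inverse lying in $\Psi^{-m}_{\sus-\cF}(\pa X;E)$. For $m=2$ the concrete choice $\Lambda=A^{*}A+\Id_{E}$ with any elliptic $A\in \Psi^{1}_{\cF-\ph}(X;E)$ works, since $N_{\cF}(\Lambda)=N_{\cF}(A)^{*}N_{\cF}(A)+\Id_{E}$ is then fibrewise strictly positive and invertible inside the $\cF$-suspended calculus. For general $m\in \bbR$ one takes an appropriate real power of such a positive elliptic operator of order~$2$ via a Seeley-type complex-power construction in the $\cF$-calculus. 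Theorem~\ref{pc.14} then produces a parametrix $Q\in \Psi^{-m}_{\cF-\ph}(X;E)$ with $Q\Lambda-\Id_{E}$ and $\Lambda Q-\Id_{E}$ in $\dot{\Psi}^{-\infty}_{\cF}(X;E)$; these remainders are compact on the $\cF$-Sobolev spaces by Theorem~\ref{mp.6}, so $\Lambda:H^{m+k}_{\cF}(X;E)\to H^{k}_{\cF}(X;E)$ is invertible modulo compacts with $Q$ as its Calkin inverse.

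Given $P\in \Psi^{m}_{\cF-\ph}(X;E)$, set $P_{0}=QP\in \Psi^{0}_{\cF-\ph}(X;E)$. Multiplicativity of the principal symbol and of the normal operator, together with the invertibility of $\sigma_{-m}(Q)$ and $N_{\cF}(Q)$, shows that $(\overline{\sigma}_{0}\oplus\overline{N}_{\cF})(P_{0})$ is invertible in its target $C^{*}$-algebra if and only if $(\overline{\sigma}_{m}\oplus\overline{N}_{\cF})(P)$ is invertible in the analogous target. Similarly, since $Q:H^{k}_{\cF}(X;E)\to H^{m+k}_{\cF}(X;E)$ is invertible modulo compacts, $P_{0}:H^{k}_{\cF}(X;E)\to H^{k}_{\cF}(X;E)$ is invertible modulo compacts if and only if $P:H^{m+k}_{\cF}(X;E)\to H^{k}_{\cF}(X;E)$ is. Applying the $m=0$ case of the statement to $P_{0}$ then closes the argument.

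The main obstacle I anticipate is verifying that $N_{\cF}(\Lambda)^{-1}$ really lies in $\Psi^{-m}_{\sus-\cF}(\pa X;E)$ (and not just in the bounded operators between the suspended Sobolev spaces), since otherwise Theorem~\ref{pc.14} cannot be invoked to produce $Q$. This step requires a parametrix construction for strictly positive elliptic elements inside the $\Gamma$-invariant $\Phi$-suspended calculus on $\pa\widetilde{M}$: one inverts the principal symbol, inverts the fibre-by-fibre Fourier-transformed normal operator parametrised by $N^{*}Y$, and sums the Neumann series absorbing the smoothing remainder. The $\Gamma$-equivariance is preserved throughout, so the resulting inverse descends to $\pa X$ via Lemma~\ref{faith.1} to give the required element of $\Psi^{-m}_{\sus-\cF}(\pa X;E)$.
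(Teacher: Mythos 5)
Your reduction to $m=0$ by composing with an honest pseudodifferential parametrix $Q\in\Psi^{-m}_{\cF-\ph}(X;E)$ of an auxiliary operator $\Lambda$ founders exactly at the step you flag: in the generality in which this lemma is used, there is no reason for $N_{\cF}(\Lambda)^{-1}$ to lie in $\Psi^{-m}_{\sus-\cF}(\pa X;E)$, so Theorem~\ref{pc.14} cannot be invoked to produce $Q$. The Neumann-series argument you sketch breaks down because of condition (iii) of Definition~\ref{tfb.13}: residual operators in the $\Gamma$-invariant $\Phi$-suspended calculus must have Schwartz kernels that are compactly supported modulo $\Gamma$, and when the resolving fibre $\widetilde{Z}$ is non-compact (e.g.\ the Kronecker foliation, where $\widetilde{Z}=\bbR$ and $\Gamma=\bbZ$ is infinite) the inverse of $\Id+R$ with $R$ residual is a bounded operator whose kernel decays but is not compactly supported in the fibre directions, hence does not belong to the calculus. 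This is precisely the new phenomenon the paper emphasizes (see the opening of \S~\ref{fc.0}): invertibility of the normal operator does not give a pseudodifferential inverse, which is why Theorem~\ref{pc.14} carries the hypothesis $N_{\cF}(P)^{-1}\in\Psi^{-m}_{\sus-\cF}$ and why Corollary~\ref{pcf.1} assumes $\pa\tM$ compact. Your argument would go through under those extra hypotheses, but Lemma~\ref{cstar.1} is needed for Proposition~\ref{fc.1} without them.

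The paper's proof is designed to avoid any pseudodifferential inverse of the auxiliary operator. It forms $2\times 2$ matrix $C^{*}$-algebras $\cA$ and $\cB$ whose entries are the norm closures $\overline{\cP}^{\bullet}_{\bullet}(X;E)$ and their images under $\overline{\sigma}\oplus\overline{N}_{\cF}$, and places $P$ in the antidiagonal matrix $\left(\begin{smallmatrix} 0 & \overline{D}_{-m}\\ P & 0\end{smallmatrix}\right)$ for $m<0$ (adjoining $\overline{D}_{m}^{-1}$ to the algebras when $m>0$). Here $\overline{D}_{\pm m}$ need only be invertible as a bounded map between Sobolev spaces, which was established after Proposition~\ref{sob.3} using a purely symbolic parametrix; invertibility of the matrix modulo compacts is equivalent to that of $P$, and the $m=0$ argument (injective unital $C^{*}$-morphisms reflect invertibility) is applied to the matrix algebras. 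To salvage your approach you would likewise have to replace the composition with $Q\in\Psi^{-m}_{\cF-\ph}$ by a device that only requires bounded, not pseudodifferential, invertibility of the order-reducing operator.
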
  
\begin{proof}
As shown above, the result is true when $m=0$, since \eqref{fc.6} is then an injective map of $C^{*}$-algebras.  When $m\ne 0$, we can still use \eqref{fc.6} to define an injective map between $C^{*}$-algebras
\[
       \overline{\sigma}\oplus \overline{N}_{\cF}: \cA \hookrightarrow \cB
\]   
where an element of $\cA$ is a matrix $\left(\begin{array}{cc} a & b\\ 
c & d \end{array}  \right)$ with entries such that
\[
  a\in \overline{\cP}^{0}_{m+k}(X;E), \; b\in \overline{\cP}^{-m}_{m+k}(X;E), \;
  c\in   \overline{\cP}^{m}_{k}(X;E), \; d\in \overline{\cP}^{0}_{k}(X;E).
    \]
while an element of $\cB$ is a matrix $\left(\begin{array}{cc} a & b\\ 
c & d \end{array}  \right)$ with entries such that
\begin{equation*}
\begin{aligned}
  a &\in \cC^{0}({}^{\cF}S^{*}X;\Lambda^{0}\otimes\hom(E)) 
  \oplus \cL( H^{m+k}_{\sus-\cF}(\pa X;E)), \\
  b &\in \cC^{0}({}^{\cF}S^{*}X;\Lambda^{-m}\otimes\hom(E)) 
  \oplus \cL( H^{k}_{\sus-\cF}(\pa X;E), H^{m+k}_{\sus-\cF}(\pa X;E)) \\
  c &\in \cC^{0}({}^{\cF}S^{*}X;\Lambda^{m}\otimes\hom(E)) 
  \oplus \cL( H^{m+k}_{\sus-\cF}(\pa X;E), H^{k}_{\sus-\cF}(\pa X;E)), \\ 
  d &\in \cC^{0}({}^{\cF}S^{*}X;\Lambda^{0}\otimes\hom(E)) 
  \oplus \cL( H^{k}_{\sus-\cF}(\pa X;E)).    
  \end{aligned} 
 \end{equation*}
Suppose first that $m<0$.  Let $\overline{D}_{-m}$ be the invertible operator of
\eqref{sob.4a}.  Then clearly 
$\left(\begin{array}{cc} 0 & \overline{D}_{-m}\\ 
P & 0 \end{array}  \right)$ is invertible modulo compact operators if and only if $P$ is.  Similarly,
$(\overline{\sigma}\oplus \overline{N}_{\cF})\left(\begin{array}{cc} 0 & \overline{D}_{-m}\\ 
P & 0 \end{array}  \right)$ is invertible if and only if $(\overline{\sigma}\oplus \overline{N}_{\cF})(P)$ is.  Since an element of $\cA$ is invertible if and only if its image in $\cB$ is invertible, the lemma follows in this case.   When $m>0$, we can proceed similarly if we enlarge the $C^{*}$-algebras $\cA$ and $\cB$ to include the elements
\[
\left(\begin{array}{cc} 0 & \overline{D}_{m}^{-1}\\ 
0 & 0 \end{array}  \right), \quad \left(\begin{array}{cc} 0 & (\overline{\sigma}\oplus \overline{N}_{\cF})(\overline{D}_{m})^{-1}\\ 
0 & 0 \end{array}  \right)
\]  
respectively.

\end{proof}  
  
As the next result shows, this criterion can be formulated independently 
of $k\in \bbR$.

\begin{corollary}
If $P\in \Psi^{m}_{\cF-\ph}(X;E,F)$ is elliptic and $N_{\cF}(P)$ induces an
invertible continuous linear map
\[
   N_{\cF}(P): H^{m+k}_{\sus-\cF}(\pa X;E) \to H^{k}_{\sus-\cF}(\pa X;F),
\]
for some $k\in \bbR$, then it induces an invertible map for all $k\in\bbR$.
Moreover, $P$ then induces a Fredholm operator
\[
P: H^{m+k}_{\cF}(X;E)\to H^{k}_{\cF}(X;F)
\]
for all $k\in\bbR$ with nullspace in $H^{\infty}_{\cF}(X;E)$  
and index independent of $k$.  
\label{fc.7}\end{corollary}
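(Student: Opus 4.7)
My plan is to establish the four assertions---$k$-independence of the invertibility of $N_\cF(P)$; Fredholmness of $P$ at every level; regularity of the nullspace; and $k$-independence of the index---in that order.

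First, since $P$ is elliptic, so is its normal operator $N_\cF(P)$ in the $\cF$-suspended calculus, and a standard symbolic iteration yields a parametrix $Q_N \in \Psi^{-m}_{\sus-\cF}(\pa X;F,E)$ with
\[
Q_N N_\cF(P) = \Id_E + R_1, \qquad N_\cF(P) Q_N = \Id_F + R_2,
\]
and $R_1, R_2 \in \Psi^{-\infty}_{\sus-\cF}$. Residual suspended operators have Schwartz kernels that are smooth and decay rapidly in the suspension directions, hence map $H^j_{\sus-\cF}$ to $H^l_{\sus-\cF}$ for every pair $j,l$. Let $T_0 : H^{k_0}_{\sus-\cF}(\pa X;F) \to H^{m+k_0}_{\sus-\cF}(\pa X;E)$ denote the hypothesised inverse. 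For injectivity at a level $k$, $N_\cF(P) u = 0$ forces $u = -R_1 u$, placing $u$ in every suspended Sobolev space and so in $H^{m+k_0}_{\sus-\cF}$, where injectivity at $k_0$ gives $u = 0$. For surjectivity at level $k$, given $f \in H^k_{\sus-\cF}$ set $u_2 := T_0(R_2 f)$ and $u := Q_N f - u_2$. The relation $N_\cF(P) u = f$ is algebraic, so the only nontrivial point is $u \in H^{m+k}_{\sus-\cF}$, which reduces to $u_2 \in H^{m+k}_{\sus-\cF}$. Here a bootstrap is needed: rewriting $u_2 = Q_N(N_\cF(P) u_2) - R_1 u_2 = Q_N(R_2 f) - R_1 u_2$, one sees that $Q_N(R_2 f)$ lies in every Sobolev space because $R_2 f$ does, and $R_1 u_2$ does because $R_1$ is residual, so $u_2$ itself lies in every Sobolev space.

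Granted this $k$-independent invertibility of $N_\cF(P)$, Fredholmness of $P : H^{m+k}_\cF(X;E) \to H^k_\cF(X;F)$ at every level is immediate from Proposition~\ref{fc.1}. For the nullspace regularity, choose any elliptic symbolic parametrix $A \in \Psi^{-m}_\cF(X;F,E)$ for $P$, yielding $AP - \Id_E = R$ with $R \in \Psi^{-1}_\cF(X;E)$. If $Pu = 0$ with $u \in H^{m+k}_\cF$, then $u = -Ru$, and Proposition~\ref{sob.3} shows $R$ sends $H^{m+k}_\cF$ into $H^{m+k+1}_\cF$; iterating places $u$ in every $H^l_\cF$.

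Finally, for $k$-independence of the index, the preceding step shows $\ker(P : H^{m+k}_\cF \to H^k_\cF) = \ker(P) \cap H^\infty_\cF$, a finite-dimensional subspace independent of $k$. For the cokernel, apply the same package to the formal adjoint $P^* \in \Psi^m_\cF(X;F,E)$ of Proposition~\ref{adj.4}: it is elliptic with principal symbol $\sigma_m(P)^*$, and its normal operator $N_\cF(P^*) = N_\cF(P)^*$ is invertible between suspended Sobolev spaces since it is the formal adjoint of the isomorphism produced in the first step. Using the $L^2$ pairing to identify $H^{-k}_\cF(X;F)$ with the topological dual of $H^k_\cF(X;F)$, the cokernel of $P$ at level $k$ is isomorphic to $\ker(P^* : H^{-k}_\cF(X;F) \to H^{-m-k}_\cF(X;E))$, which is $k$-independent by the nullspace regularity applied to $P^*$. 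The chief technical hurdle throughout is the bootstrap in the first paragraph, where one must upgrade $T_0(R_2 f)$ beyond the Sobolev level at which $T_0$ is a priori defined; once that is handled, everything else reduces to standard elliptic regularity and duality manipulations.
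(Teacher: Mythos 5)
Your proof is correct in substance, but it runs in the opposite logical direction from the paper's, so it is worth comparing the two. You first upgrade the invertibility of $N_{\cF}(P)$ from one Sobolev level to all levels by a parametrix argument \emph{inside the suspended calculus} (residual remainders $R_{1},R_{2}$ smoothing on the whole suspended Sobolev scale, injectivity and surjectivity transferred to level $k_{0}$ by bootstrap), and only then invoke the ``if'' direction of Proposition~\ref{fc.1} to get Fredholmness at every level. The paper instead starts from Fredholmness at the single given level $k$ (Proposition~\ref{fc.1}), propagates closed range, nullspace and index to all $k'\ge k$ by elliptic regularity from $QP-\Id=R\in\Psi^{-\infty}_{\cF}$, handles $k'\le k$ by dualizing to $P^{*}$ on the Hilbert spaces $H^{-k}_{\cF}$, and only at the very end reads off the invertibility of $N_{\cF}(P)$ at all levels from the ``only if'' direction of Proposition~\ref{fc.1}. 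The paper's route is purely functional-analytic and never needs a symbol calculus for $\Psi^{*}_{\sus-\cF}(\pa X)$ as a standalone object; yours is more direct for the first assertion but leans on the existence of $Q_{N}\in\Psi^{-m}_{\sus-\cF}$ with $Q_{N}N_{\cF}(P)-\Id$, $N_{\cF}(P)Q_{N}-\Id$ in $\Psi^{-\infty}_{\sus-\cF}$, which the paper does not establish for the suspended calculus per se. The cleanest way to justify it within the paper's framework is to apply $N_{\cF}$ to the full-calculus parametrix identity $QP-\Id_{E}=R\in\Psi^{-\infty}_{\cF}(X;E)$ and use the short exact sequence \eqref{symb.20}, so you should say that. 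One further caution: your aside that $N_{\cF}(P^{*})=N_{\cF}(P)^{*}$ is invertible ``since it is the formal adjoint of the isomorphism produced in the first step'' is shaky, because the suspended Sobolev spaces of Definitions~\ref{sss.9} and \ref{sss.11} carry a $\sup$-norm over the base and are Banach spaces, not Hilbert spaces, so duality there is not free; fortunately your argument does not actually need it --- once $P$ is Fredholm at every level, the cokernel is identified with $\ker P^{*}$ in the genuinely Hilbertian space $H^{-k}_{\cF}(X;F)$, and ellipticity of $P^{*}$ alone gives the regularity that makes that kernel $k$-independent.
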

\begin{proof}
By Proposition~\ref{fc.1}, we know that $P$ is Fredholm as a map 
\begin{equation}
    P:  H^{m+k}_{\cF}(X;E)\to H^{k}_{\cF}(X;F).
\label{fc.8}\end{equation}
Since $P$ is elliptic, there exists $Q\in \Psi^{-m}_{\cF}(X;\cF,E)$ such
that 
\[
    QP -\Id_{E}= R\in \Psi^{-\infty}_{\cF}(X;E).
\]
In particular, this means that for $u\in H^{p}_{\cF}(E)$ for some $p\in \bbR$, 
\begin{gather}
  Pu \in H^{l}_{\cF}(X;F)\quad \Longrightarrow \quad u +Ru\in H^{m+l}_{\cF}(X;E) \quad \Longrightarrow \quad   \label{fc.9a}
  u\in H^{m+l}_{\cF}(X;E); \\
 Pu=0 \quad \Longrightarrow \quad u \in H^{\infty}_{\cF}(X;E).
\label{fc.9}\end{gather}
Similarly, we have
\begin{gather}
  u\in H^{p}_{\cF}(X;F), \; P^{*}u\in H^{l}_{\cF}(X;E), \quad \Longrightarrow \quad 
  u \in H^{m+l}_{\cF}(X;F);  \\
u\in H^{p}_{\cF}(X;F), \;  P^{*}u=0, \quad \Longrightarrow \quad u\in H^{\infty}_{\cF}(X;F).   
\label{fc.10}\end{gather}
The map \eqref{fc.8} being Fredholm, it has in particular a closed range.  From
\eqref{fc.9a}, it follows that for $k'\ge k$, the induced map
\begin{equation}
  P: H^{m+k'}_{\cF}(X;E) \to H^{k'}_{\cF}(X;F)
\label{fc.11}\end{equation}
has closed range.  By \eqref{fc.9} and \eqref{fc.10}, we also see that it 
is in fact Fredholm with the same nullspace and index as the map \eqref{fc.8}.

On the other hand, by duality, we see that the induced map 
\[
   P^{*}: H^{-k}_{\cF}(X;F) \to H^{-k-m}_{\cF}(X;E)
\]
is also Fredholm.  By the same argument, we can conclude that for $k'\le k$,
the map 
\[
    P^{*}: H^{-k'}_{\cF}(X;F) \to H^{-k'-m}_{\cF}(X;E) 
\]
is Fredholm.  By duality, this means the map 
\[
         P: H^{m+k'}_{\cF}(X;E)\to H^{k'}_{\cF}(X;F)
\]
is also Fredholm for $k'\le k$.   Again, by \eqref{fc.9} and \eqref{fc.10}, 
its nullspace and index are the same as those of \eqref{fc.8}.  Using Proposition~\ref{fc.1}, we can therefore conclude that the induced map
\[
   N_{\cF}(P): H^{m+k'}_{\sus-\cF}(\pa X;E) \to H^{k'}_{\sus-\cF}(\pa X;F),
\]
is bijective for all $k'\in \bbR$. 
\end{proof}
This suggests the following definition.

\begin{definition}
An $\cF$-operator $P\in \Psi^{m}_{\cF}(E;F)$ is said to be \textbf{fully elliptic} if it is elliptic and its normal operator $N_{\cF}(P) \in \Psi^{m}_{\sus-\cF}(\pa X;E,F)$ is invertible as an operator 
\[
       N_{\cF}(P): H^{m+k}_{\sus-\cF}(\pa X) \to H^{k}_{\sus-\cF}(X;F)
\]
for all $k\in \bbR$.  
\label{pc.1}\end{definition}  
As in \cite{Mazzeo-Melrose} we can more generally let an $\cF$-operator acts on weighted Sobolev spaces.

\begin{theorem}[Fredholm criterion]
A classical $\cF$-operator $P\in \Psi^{m}_{\cF-\ph}(X;E,F)$ induces a Fredholm operator 
\[
 P: x^{\ell}H^{m+k}_{\cF}(X;E)\to x^{\ell}H^{k}_{\cF}(X;F)      
\]
if and only if $P$ is fully elliptic.  In this case, the index is independent
of $k$ and $\ell$.  Furthermore, $\ker P \subset x^{\ell}H^{\infty}_{\cF}(X;E)$ and its range is complementary to a subspace of 
$x^{\ell}H^{\infty}_{\cF}(X;F)$.
\label{fc.13}\end{theorem}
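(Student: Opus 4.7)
The plan is to reduce to the unweighted case $\ell=0$ already handled by Proposition~\ref{fc.1} and Corollary~\ref{fc.7}, via conjugation by $x^{\ell}$, and then to verify independence of the index on $\ell$ by a separate compactness argument.

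For $\ell\in\bbR$, set $\widetilde{P}_{\ell}=x^{-\ell}\circ P\circ x^{\ell}$. By Remark~\ref{conjugation.1} this operator lies in $\Psi^{m}_{\cF-\ph}(X;E,F)$ with the same principal symbol as $P$, and by Remark~\ref{conjugation.2} (applied to the underlying $\Gamma$-invariant $\Phi$-operator) it has the same normal operator; in particular $\widetilde{P}_{\ell}$ is fully elliptic if and only if $P$ is. Multiplication by $x^{\ell}$ is an isomorphism $H^{k}_{\cF}(X;G)\to x^{\ell}H^{k}_{\cF}(X;G)$ intertwining $\widetilde{P}_{\ell}\colon H^{m+k}_{\cF}\to H^{k}_{\cF}$ with $P\colon x^{\ell}H^{m+k}_{\cF}\to x^{\ell}H^{k}_{\cF}$, so Fredholmness of $P$ on the weighted spaces is equivalent to Fredholmness of $\widetilde{P}_{\ell}$ on the unweighted spaces, with $\ker P=x^{\ell}\ker\widetilde{P}_{\ell}$ and equal indices. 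I would then apply Proposition~\ref{fc.1} to $\widetilde{P}_{\ell}$ to obtain the equivalence ``$P$ Fredholm $\Leftrightarrow$ $P$ fully elliptic'', and Corollary~\ref{fc.7} to get $k$-independence of the index together with the regularity $\ker\widetilde{P}_{\ell}\subset H^{\infty}_{\cF}(X;E)$ and range complementary to a subspace of $H^{\infty}_{\cF}(X;F)$. Conjugating back by $x^{\ell}$ then transfers these conclusions to $P$ on the weighted spaces.

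It remains only to show independence of the index on $\ell$. The plan is to prove that for any $\ell,\ell'\in\bbR$ the difference $\widetilde{P}_{\ell}-\widetilde{P}_{\ell'}$ is compact from $H^{m+k}_{\cF}(X;E)$ to $H^{k}_{\cF}(X;F)$, whence the two Fredholm operators must have equal index. The difference is polyhomogeneous of order $m$, has vanishing principal symbol (conjugation by the scalar $x^{\ell}$ preserves $\sigma_{m}$), and has vanishing normal operator by Remark~\ref{conjugation.2}; combining the exact sequences \eqref{symb.2b} and \eqref{symb.20} forces it into $x\Psi^{m-1}_{\cF-\ph}(X;E,F)$, and the polyhomogeneous form of the compactness criterion in Theorem~\ref{mp.6} then supplies the desired compactness. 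The one step I expect to require a little care is this combined use of the two short exact sequences to conclude a simultaneous drop in both order and power of $x$; everything else is essentially conjugation bookkeeping on top of the unweighted theory already established.
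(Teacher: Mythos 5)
Your proposal is correct and follows the same route as the paper: reduce to $\ell=0$ by conjugating with $x^{\ell}$, observe via Remarks~\ref{conjugation.1} and~\ref{conjugation.2} that $\widetilde P_{\ell}=x^{-\ell}Px^{\ell}$ is again classical with the same principal symbol and normal operator, and invoke Proposition~\ref{fc.1} and Corollary~\ref{fc.7} through the intertwining isomorphisms $x^{\ell}\colon H^{k}_{\cF}\to x^{\ell}H^{k}_{\cF}$. The only divergence is the final step: the paper disposes of the $\ell$-independence of the index by noting that $\ell\mapsto \widetilde P_{\ell}$ is a continuous family of Fredholm operators, whereas you show $\widetilde P_{\ell}-\widetilde P_{\ell'}$ is compact by combining the two symbol sequences \eqref{symb.2b} and \eqref{symb.20} with Theorem~\ref{mp.6}; both work, and your version trades the (unstated in the paper) verification of norm-continuity of $\ell\mapsto\widetilde P_{\ell}$ for the equally standard fact that simultaneous vanishing of $\sigma_{m}$ and $N_{\cF}$ places the difference in $x\Psi^{m-1}_{\cF-\ph}$.
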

\begin{proof}
When $\ell=0$, this follows from Corollary~\ref{fc.7}.  If $\ell\ne 0$, we have  a commutative diagram  
\begin{equation}
\xymatrix{
   H^{m+k}_{\cF}(X;E) \ar[r]^{P_{\ell}}\ar[d]^{x^{\ell}} & H^{k}_{\cF}(X;F) \ar[d]^{x^{\ell}}  \\
 x^{\ell}H^{m+k}_{\cF}(X;E) \ar[r]^{P} & x^{\ell}H^{k}_{\cF}(X;F) 
}
\label{fc.14}\end{equation}
where $P_{\ell}= x^{-\ell}\circ P\circ x^{\ell}$.  Since $P_{\ell}\in \Psi^{m}_{\cF}(X;E,F)$ is elliptic and is such that $N_{\cF}(P_{\ell})= N_{\cF}(P)$ by
Remark~\ref{conjugation.2},
we see that the top horizontal map is Fredholm by Corollary~\ref{fc.7}.  Since
the vertical maps are isometries of Hilbert spaces, we conclude that the bottom
horizontal map is Fredholm with the same index.  To see that the index
does not depend on $\ell$, it suffices to notice $\ell\mapsto P_{\ell}$ is a continuous family of Fredholm operators, which means the index cannot jump.   
\end{proof}  

\begin{remark}
When a fully elliptic operator $P$ admits a parametrix as in Theorem~\ref{pc.14}, its nullspace is automatically a finite dimensional subspace of $\dot{\cC}^{\infty}(X;E)$, in particular, it does not depend on the choice of $\ell$ in the theorem above.  The author does not know if this holds more generally.    
\label{fc.15b}\end{remark}

\section{An index theorem for some Dirac-type operators}\label{it.0}

In this section, we will suppose that $X$ is even dimensional and oriented.   
To get an index formula for Dirac-type operators, we will make a different assumption on the foliation $\cF$.

\begin{assumption}
The foliation $\cF$ on $\pa X$ has \textbf{compact leaves} and can be described as
in Assumption~\ref{tfb.2b} with $\Gamma$ a \textbf{finite group}, but \textbf{without} assuming the action of $\Gamma$ on $Y$ is locally free.  Furthermore,
the fibration $\Phi:\pa\widetilde{M}\to Y$ has oriented fibres and base and the group $\Gamma$ acts on $\pa\widetilde{M}$ and $Y$ by orientation preserving diffeomorphisms.       
\label{fini.1}\end{assumption}

On $X$, we will consider $\cF$-metrics whose restriction to $M\subset X$ can be lifted to a $\Gamma$-invariant product-type $\Phi$-metric of the form
\begin{equation}
  g_{\Phi}= \frac{dx^{2}}{x^{4}}+ \frac{\Phi^{*}h}{x^{2}} +\kappa,
\label{it.1}\end{equation}
where $h$ is a $\Gamma$-invariant metric on $Y$ and $\kappa$ is a $\Gamma$-invariant family of metrics in the fibres of the fibration $\Phi$ that is lifted to a symmetric 2-tensor in the ambient space via a choice of $\Gamma$-invariant connection for the fibration $\Phi$. Both $h$ and $\kappa$ are allowed to depend smoothly on $x$.  Since $\Gamma$ is finite, such metrics are easy to construct: we can insure $h$ and $\kappa$  are $\Gamma$-invariant by averaging over $\Gamma$, while a $\Gamma$-invariant connection is obtained by taking the orthogonal complement of the vertical tangent bundle $T(\pa\tM/Y)$ with respect to a choice of $\Gamma$-invariant metric on $\pa\tM$.

Let also $\cE$ be a Clifford module for the bundle of Clifford algebras defined
by $({}^{\cF}TX,g_{\cF})$.  We assume $\cE$ is equipped with a Clifford connection and
that in $M=\pa M\times [0,\epsilon)_{x}\subset X$, this Clifford module is naturally identified with $\pi_{\pa}^{*}(\left.\cE\right|_{\pa M})$ where 
$\pi_{\pa}: \pa M \times [0,\epsilon)_{x}\to \pa M$ is the projection on the left factor.  Let $\eth_{X}\in \Psi^{1}_{\cF}(X;\cE)$ be the corresponding Dirac-type operator.  Let $\eth_{M}$ be its restriction to $M$ and $\widetilde{\eth}\in \Psi^{1}_{\Phi,\Gamma}(\widetilde{M};\widetilde{\cE})$ the differential operator which is the $\Gamma$-invariant lift to $\widetilde{M}$ where $\widetilde{\cE}$ is the lift of $\left. \cE\right|_{M}$ to $\widetilde{M}$.

 Since $X$ is 
even dimensional, $\cE$ comes with a natural $\bbZ_{2}$-grading which induces a decomposition $\cE= \cE^{+}\oplus \cE^{-}$.  
The Dirac operator $\eth_{X}$ is odd with respect to this grading, so decomposes in two parts,
\begin{equation}
   \eth_{X}= \left( 
   \begin{array}{cc}
     0 & \eth^{-}_{X} \\
     \eth_{X}^{+} & 0 
   \end{array}
  \right),  \quad 
  \eth_{X}^{\pm}: \CI(X;\cE^{\pm}) \to \CI(X; \cE^{\mp}).
\label{it.2}\end{equation}
There is a corresponding decomposition for $\widetilde{\eth}$.  In local coordinates near the boundary, the operator $\widetilde{\eth}^{+}$ takes the form
\begin{equation}
  \widetilde{\eth}^{+}= \gamma\left( x^{2}\frac{\pa}{\pa x} +\widetilde{\eth}_{0}\right)
  + \cl\left(\frac{e^{k}}{x}\right) \nabla_{x e_{k}} +R
\label{it.3}\end{equation}
where $\gamma$ is Clifford multiplication by $\frac{dx}{x^{2}}$, the 
$e_{1},\ldots, e_{p}$, $p=n-\ell-1$ are local orthonormal sections of 
$(TY,h)$ and  $\widetilde{\eth}_{0}\in \Psi^{1}(\pa \widetilde{M}/Y;\widetilde{\cE}_{0})$  is a family
of Dirac operators associated to the Clifford module $\widetilde{\cE}_{0}= 
\left.\widetilde{\cE}^{+}\right|_{\pa \widetilde{M}}$ and the family of metrics
$\kappa$.  Here, the Clifford multiplication on $\cE_{0}$ is given by
\begin{equation}
  T^{*}(\pa\widetilde{M}/Y) \ni \xi \mapsto -\gamma\cdot \xi \in \Cl_{g_{\Phi}}(\widetilde{M}).
\label{it.4}\end{equation}
Finally, $R\in x\Psi^{1}_{\Phi}(\widetilde{M};\widetilde{\cE})$ is a term that will not contribute to the normal operator.  
Via the identification
\begin{equation}
   \left. \widetilde{\cE}^{-}\right|_{\pa \widetilde{M}} \overset{-\gamma}{\longrightarrow} \widetilde{\cE}_{0} 
\label{it.5}\end{equation}
given by Clifford multiplication by $-\gamma$, the (Fourier transform of the)
normal operator of $\widetilde{\eth}^{+}$ can be seen as an element of  $\Psi^{1}_{\Phi-\sus}(\pa \widetilde{M};\widetilde{\cE}_{0})$ taking the form
\begin{equation}
  \widehat{N}_{\Phi}(\widetilde{\eth}^{+})= \widetilde{\eth}_{0}+ i\tau + i\gamma_{Y}.
\label{it.6}\end{equation}
That is, it is a family of operators on $T^{*}Y\times \bbR$ where at $(y,\delta,\tau)\in T^{*}Y\times \bbR_{\tau}$, $\gamma_{Y}$ 
denotes Clifford multiplication by $-\gamma\cdot\frac{\delta}{x}$.


Our choice of connection for the fibration $\Phi$ gives us a decomposition of
\begin{equation}
  {}^{\Phi}T\pa \widetilde{M}:= \{ v\in \left.{}^{\Phi}T\widetilde{M}
\right|_{\pa \widetilde{M}} \; | \; g_{\Phi}(v, x^{2}\frac{\pa}{\pa x})=0 \}
\label{it.9}\end{equation}
as ${}^{\Phi}T\pa \widetilde{M}= \Phi^{*}TY\oplus T(\pa \widetilde{M}/Y)$.  This gives
a corresponding decomposition of the Clifford algebra,
\begin{equation}
   \Cl({}^{\Phi}T\pa \widetilde{M})= \Phi^{*}\Cl_{h}(Y)\hat{\otimes}\Cl_{\kappa}(T(\pa\widetilde{M}/Y))
\label{it.10}\end{equation}
where $\hat{\otimes}$ is the graded tensor product.  

As in \cite{Albin-Rochon1}, we need to make an assumption in order to get a Fredholm operator.  
\begin{assumption}
There exists a $\Gamma$-invariant family of self-adjoint operators $A\in \Psi^{-\infty}(\pa\widetilde{M}/Y;\widetilde{\cE}_{0})$ anti-commuting with Clifford multiplication by odd sections of $\Phi^{*}\Cl_{h}(Y)$ and such that 
$\widetilde{\eth}_{0}+A$ is an invertible family.  A particularly natural example is when $A=0$ and the family $\widetilde{\eth}_{0}$ is itself invertible, see \S~\ref{qmtn.0} below.   
\label{ad.1}\end{assumption}

\begin{remark}
Depending on whether the fibres of the fibration $\Phi$ are even or odd dimensional,  the Clifford module $\widetilde{\cE}_{0}$ may or may not have a $\bbZ_{2}$-grading as a $\Cl_{\kappa}(T\widetilde{M}/Y)$ Clifford module.  When it does, asking the perturbation $A$ to anti-commute with Clifford multiplication by odd sections of $\Phi^{*}\Cl_{h}(Y)$ forces $A$ to be odd with respect to the $\bbZ_{2}$-grading of $\widetilde{\cE}_{0}$ (as a $\Cl_{\kappa}(T\widetilde{M}/Y)$ Clifford module).    
\label{ad.2}\end{remark}

Let $\rho\in \cS(T^{*}Y\times\bbR)$ be a real-valued $\Gamma$-invariant Schwartz function on $T^{*}Y\times \bbR\cong N^{*}Y$ equal to $1$ on the zero section.  Then there exists 
$\widetilde{Q}\in \Psi^{-\infty}_{\Phi,\Gamma}(\widetilde{M},\widetilde{\cE})$ such that 
\begin{equation}
  \widehat{N}_{\Phi}(\widetilde{Q})= \rho A.
\label{ad.3}\end{equation}
Let us denote by $Q$ the corresponding operator in 
\[
\Psi^{-\infty}_{\cF}(M;\cE^{+},\cE^{-})\subset \Psi^{-\infty}_{\cF}(X;\cE^{+},\cE^{-}).
\]

\begin{proposition}
The operator $\eth^{+}+Q$ is Fredholm and there exists $P\in \Psi^{-1}_{\cF}(X;\cE^{-},\cE^{+})$ such that 
\begin{equation*}
\begin{gathered}
     P(\eth^{+}_{X}+Q)-\Id_{\cE^{+}}= R_{+}\in x^{\infty}\Psi^{-\infty}(X;\cE^{+}), \\
     (\eth^{+}_{X}+Q)P- \Id_{\cE^{-}}= R_{-}\in x^{\infty}\Psi^{-\infty}(X;\cE^{-}).
\end{gathered}
\end{equation*}
For this reason, we regard  $\eth^{+}_{X}+Q$ as a \textbf{Fredholm perturbation} of $\eth^{+}_{X}$.\label{ad.4}\end{proposition}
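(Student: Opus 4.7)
The strategy is to verify the hypotheses of Remark~\ref{lcfr.1} for $\eth_X^+ + Q$, which will then yield a parametrix $P \in \Psi^{-1}_\cF(X;\cE^-,\cE^+)$ with $\dot{\Psi}^{-\infty}$ remainders. Ellipticity is immediate since $Q \in \Psi^{-\infty}_\cF(X;\cE^+,\cE^-)$ and $\eth_X^+$ is elliptic, so the real content is to show that the Fourier-transformed normal operator
\begin{equation*}
  \cN := \widehat{N}_\Phi(\widetilde{\eth}^+ + \widetilde{Q}) = \widetilde{\cL} + i\tau + i\gamma_Y, \qquad \widetilde{\cL} := \widetilde{\eth}_0 + \rho A,
\end{equation*}
(a family over $N^*Y\cong T^*Y\times \bbR$ of fibrewise operators obtained from \eqref{it.6} and \eqref{ad.3}) is invertible with inverse in $\Psi^{-1}_{\sus-\Phi}(\pa\widetilde{M}; \widetilde{\cE}_0)$.

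The central computation I would carry out is to establish $\{\widetilde{\cL}, \gamma_Y\} = 0$. The anti-commutation $\{A, \gamma_Y\} = 0$ is Assumption~\ref{ad.1} itself. For $\{\widetilde{\eth}_0, \gamma_Y\} = 0$, I would note that by the tensor-product decomposition \eqref{it.10}, $\widetilde{\eth}_0$ is built from Clifford multiplications by sections of $\Cl_\kappa(T(\pa\widetilde{M}/Y))$ (which anti-commute with odd elements of $\Phi^*\Cl_h(Y)$) composed with vertical covariant derivatives, and the product-type form \eqref{it.1} together with the $\Gamma$-invariant choice of connection for $\Phi$ guarantees that $\gamma_Y$ is parallel in the vertical directions, so no commutator terms appear. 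Combined with $\gamma_Y^2 = -|\delta|^2$ and the fact that $\tau$ is a real scalar, a direct expansion (grouping $D = \widetilde{\cL} + i\gamma_Y$, which is self-adjoint and commutes with $\tau$) then gives
\begin{equation*}
  \cN^* \cN = \widetilde{\cL}^2 + |\delta|^2 + \tau^2.
\end{equation*}
If $\cN u = 0$, pairing with $u$ forces $\|\widetilde{\cL} u\|^2 + (|\delta|^2 + \tau^2)\|u\|^2 = 0$, hence $\delta = \tau = 0$ and $\widetilde{\cL} u = 0$; but at the origin of $N^*Y$, $\rho = 1$ so $\widetilde{\cL} = \widetilde{\eth}_0 + A$ is invertible by Assumption~\ref{ad.1}, forcing $u = 0$. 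The analogous argument with $\cN \cN^*$ rules out cokernel, establishing $L^2$-invertibility.

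To promote this to invertibility inside the suspended calculus, I would invoke the argument from Corollary~\ref{pcf.1}: since Assumption~\ref{fini.1} forces $\Gamma$ to be finite and $\pa X$ is compact, $\pa \widetilde{M}$ is itself compact, and the $C^*$-algebra argument there (together with ellipticity of $\cN$) places $\cN^{-1}$ inside $\Psi^{-1}_{\sus-\Phi}(\pa\widetilde{M}; \widetilde{\cE}_0)$. Then Remark~\ref{lcfr.1} directly produces $P$ with the required parametrix identities, and since $\dot{\Psi}^{-\infty}(X)$ acts by compact operators on every weighted Sobolev space, $\eth_X^+ + Q$ is Fredholm. The most delicate step will be the clean bookkeeping for the anti-commutation identity in the presence of the family structure; the role of the finiteness of $\Gamma$ in Assumption~\ref{fini.1}, as opposed to the weaker Assumption~\ref{tfb.2b}, enters only in the last step of guaranteeing the inverse lies in the calculus.
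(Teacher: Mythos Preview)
Your proposal is correct and follows essentially the same route as the paper: compute $\cN^{*}\cN = (\widetilde{\eth}_{0}+\rho A)^{2} + \tau^{2} + \|\delta\|_{h}^{2}$ via the Clifford anti-commutation relations (which you justify more carefully than the paper, which simply invokes ``the anti-commuting relations of the Clifford multiplication''), deduce invertibility of $\cN$ from Assumption~\ref{ad.1}, and then apply Remark~\ref{lcfr.1}. The one small detour worth flagging is your appeal to the $C^{*}$-algebra argument of Corollary~\ref{pcf.1} to place $\cN^{-1}$ in $\Psi^{-1}_{\sus-\Phi}$: since $\pa\widetilde{M}$ is compact and $\cN$ is elliptic with $\cN^{*}\cN$ strictly positive, the inverse lies in the suspended calculus directly by the usual parametrix-plus-Neumann-series argument, which is what the paper is tacitly using when it writes ``this clearly implies\dots''.
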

\begin{proof}
We have
\[
   \widehat{N}_{\Phi}(\widetilde{\eth}^{+}+\widetilde{Q})= \widetilde{\eth}_{0}+\rho A +i\tau
   +i\gamma_{Y}.
\]
Using the anti-commuting relations of the Clifford multiplication, we get
\[
  \widehat{N}_{\Phi}(\widetilde{\eth}^{+}+\widetilde{Q})^{*} \widehat{N}_{\Phi}(\widetilde{\eth}^{+}+\widetilde{Q}) = (\widetilde{\eth_{0}}+\rho A)^{2} + \tau^{2} + \|\delta\|^{2}_{h}
\]
at $(y,\delta,\tau)\in T^{*}Y\times \bbR$.  Since $\widetilde{\eth_{0}}+A$ is an invertible self-adjoint family by assumption, this clearly implies $\widehat{N}_{\Phi}(\widetilde{\eth}^{+}+\widetilde{Q})$ is invertible with inverse in $\Psi^{-1}_{\sus-\Phi,\Gamma}(\pa\widetilde{M},\widetilde{\cE}_{0})$.  The result therefore follows from Remark~\ref{lcfr.1}.
\end{proof}

In appendix~C of \cite{Melrose-Rochon06}, a certain adiabatic calculus was introduced to relate fully elliptic $\Phi$-operators with fully elliptic cusp operator, which are fibred cusp operators for which the fibration on the boundary is given by mapping the entire boundary onto a point.  More precisely, in section~8 of
\cite{Melrose-Rochon06}, a natural construction associates to a fully elliptic operator $P\in \Psi_{\Phi}^{m}(\widetilde{M},E,F)$ an element
$P_{\ad}\in\Psi^{m}_{\Phi-\ad,\cusp}(\widetilde{M};E,F)$ of the corresponding 
adiabatic calculus which is fully elliptic.  Essentially, $P_{\ad}$ corresponds to 
a one parameter family 
\begin{equation}
      (0,+\infty)\ni \delta \mapsto P_{\delta}\in\Psi^{m}_{\cusp}(\widetilde{M};E,F)
\label{ad.5}\end{equation}
of cusp operators which are fully elliptic for $\delta$ small enough.  For such 
a small $\delta$,  the index of $P_{\delta}$ is the same as the one of $P$.  The family $P_{\delta}$ is not uniquely defined, but its homotopy class among fully elliptic operators is.  This is because $P_{\ad}$ makes precise the sense in which $P_{\delta}\to P$ as $\delta\to 0^{+}$.  Besides the usual symbol and normal operator, the operator $P_{\ad}$ has a third `symbol', the adiabatic normal operator, whose r\^ole is to relate the normal operator of $P$ with the normal operator of the family $P_{\delta}$.  

Strictly speaking, the discussion in \cite{Melrose-Rochon06} is for compact manifolds with boundary.  What is important however is the behavior near the boundary.  In that sense, it extends immediately to operators $P$ in $\Psi^{m}_{\Phi}(\widetilde{M};E,F)$ with `fully elliptic' meaning in that context that $P$ is elliptic near the boundary $\pa \widetilde{M}$ and has an invertible normal operator.  

We can apply this adiabatic construction to $\widetilde{\eth}^{+}+\widetilde{Q}$.  First
to $\widetilde{\eth}^{+}$ by considering the family of $\Gamma$-invariant metrics
for $x<\frac{\epsilon}{2}$ 
\begin{equation}
      \widetilde{g}_{\cusp}(\delta) = \frac{dx^{2}}{x^{4}}+ \frac{\Phi^{*}h}{(x+\delta)^{2}}+ \kappa
\label{ad.6}\end{equation}
  with the (adiabatic) limit $\delta\to 0^{+}$ giving back the metric $g_{\Phi}$.  This gives a corresponding family $g_{\cusp}(\delta)$ of cusp metrics on
  $M\times (0,\frac{\epsilon}{2})$ which can be extended to give a family of cusp metrics $g_{\cusp}(\delta)$ on $X$.

On $X\times [0,\nu)_{\delta}$, there is a natural vector bundle ${}^{\ad}TX$ such
that $\left. {}^{\ad}TX\right|_{X\times\{0\}}= {}^{\cF}TX$ and 
$\left. {}^{\ad}TX\right|_{X\times\{\delta\}}= {}^{\cusp}TX$ for
$\delta>0$.  If $X$ has a spin structure and $S_{\cF}$ is the corresponding spinor bundle associated to the metric $g_{\cF}$, then $\cE= S_{\cF}\otimes E$ for some smooth complex vector bundle $E\to X$.  If
$S_{\ad}$ is the spinor bundle associated to ${}^{\ad}TX\to X\times [0,\nu)_{\delta}$ with respect to the family of metrics \eqref{ad.6}, then we can define a Clifford module  on $X\times [0,\nu)_{\delta}$ with respect to the Clifford bundle of ${}^{\ad}TX$ by $\cE_{\ad}= S_{\ad}\otimes \pi^{*}E$, where $\pi: X\times [0,\nu)_{\delta}\to X$ is the natural projection.  The Clifford connection of $\cE$ corresponds to a choice of connection $\nabla^{E}$ for the bundle $E$.  Taking the pull-back connection on $\pi^{*}E$, we get in this way a natural choice of Clifford connection on $\cE_{\ad}$.  
When $X$ is not spin, we can define $\cE_{\ad}$ and its Clifford connection locally on $\cU\times [0,\nu)_{\delta}$ by choosing a spin structure on $\cU\subset X$, where $\cU$ is an open set over which $\left. {}^{\cF}TX\right|_{\cU}$ admits a spin structure.  These local definitions fit together to give a global Clifford module $\cE_{\ad}$ with Clifford connection on $X\times [0,\nu)_{\delta}$. 
This gives a family $\eth_{X}(\delta)$ of Dirac-type operators which together with $\eth_{X}$ fit to give, when restricted to $M$ and lifted to $\widetilde{M}$, a $\Gamma$-invariant element 
$\eth_{\ad}\in \Psi^{1}_{\Phi-\ad,\cusp}(\widetilde{M};\widetilde{\cE}_{\ad})$, where $\widetilde{\cE}_{\ad}$ is the lift of $\cE_{\ad}$ to $\widetilde{M}\times [0,\nu)_{\delta}$.   Our choices of Clifford module and Clifford connection insure that the adiabatic normal operator is the same as the one constructed in section~8 of \cite{Melrose-Rochon06} (\cf \cite{LMP}).

Proceeding as in section~8 of \cite{Melrose-Rochon06}, the perturbation
$\widetilde{Q}\in \Psi^{-\infty}_{\Phi,\Gamma}(\widetilde{M};\widetilde{\cE}^{+},\widetilde{\cE}^{-})$ can be extended to give an element 
$\widetilde{Q}_{\ad}\in \Psi^{-\infty}_{\Phi-\ad, \cusp}(\widetilde{M};\widetilde{\cE}^{+}_{\ad},\widetilde{\cE}^{-}_{\ad})$ which we can assume is $\Gamma$-invariant by averaging over $\Gamma$.  In particular, we get in this way 
a family $\widetilde{Q}(\delta)\in \Psi^{-\infty}_{\cusp,\Gamma}(\widetilde{M},\widetilde{\cE}^{+}_{\delta},\widetilde{\cE}^{-}_{\delta})$ of $\Gamma$-invariant
cusp operators that descends to $M$ and extends to $X$ to a family 
$Q(\delta)\in \Psi^{-\infty}_{\cusp}(X;\cE^{+}_{\delta},\cE^{-}_{\delta})$ of  cusp operators.   Here,
$\cE_{\delta}$ and $\widetilde{\cE}_{\delta}$ are the restriction of $\cE_{\ad}$ and $\widetilde{\cE}_{\ad}$ to $M\times\{\delta\}$ and $\widetilde{M}\times\{\delta\}$ respectively.
Notice that, under the identification
\[
      \left. \widetilde{\cE}^{-}_{\delta}\right|_{\pa \widetilde{M}} \overset{-\gamma}{\longrightarrow} 
      \widetilde{\cE}_{\delta,0}:=  \left. \widetilde{\cE}^{+}_{\delta}\right|_{\pa \widetilde{M}},
\]
we can insure $\widehat{N}_{\cusp}(\widetilde{Q}(\delta))$ is self-adjoint as an element of 
$\Psi^{-\infty}_{\sus}(\pa\widetilde{M};\cE_{\delta,0})$ by replacing $\widehat{N}_{cu}(\widetilde{Q}(\delta))$ by  $\frac{\widehat{N}_{cu}(\widetilde{Q}(\delta)) + \widehat{N}_{cu}(\widetilde{Q}(\delta))^{*}}{2}$ and making the corresponding changes for $\widetilde{Q}_{\ad}$.

From the results of section~8 of \cite{Melrose-Rochon06}, for $\delta>0$ sufficiently small, the family $(\eth^{+}_{X}+Q)(\delta)\in\Psi^{1}_{\cusp}(X;\cE(\delta))$ is fully elliptic (so Fredholm) with the same index as $\eth^{+}_{X}+ Q$.  
Let 
\[\widehat{N}_{\cusp}((\widetilde{\eth}^{+}+\widetilde{Q})(\delta))(\tau)=
\widetilde{\eth}_{0}(\delta) + \widehat{N}(\widetilde{Q})(\tau) + i\tau,
\quad \delta>0, \; \tau\in \bbR,
\]
be the corresponding Fourier transform of the normal operator for this family.  Thus, it is a family of $\Gamma$-invariant suspended operators on $\pa\widetilde{M}$.  Evaluating at $\tau=0$, we get a family of self-adjoint invertible operators $\widetilde{\eth}_{0}(\delta)+\widehat{N}_{\cusp}(\widetilde{Q}(\delta))(0)$ with 
a well-defined eta invariant
\begin{equation}
 \widetilde{\eta}(\delta)= \eta(\widetilde{\eth}_{0}(\delta)+ \widehat{N}_{\cusp}(\widetilde{Q}(\delta))(0))
 \label{ad.7}\end{equation}
If we look at the corresponding family of operators $\eth_{0}(\delta)+\widehat{N}_{\cusp}(Q(\delta))(0)$ on $\pa M$, we also get an eta invariant
\begin{equation}
    \eta(\delta)= \eta(\eth_{0}(\delta)+ \widehat{N}_{\cusp}(Q(\delta))(0))
\label{ad.8}\end{equation}
\begin{definition}
The \textbf{rho invariant} of the invertible perturbation $\widetilde{\eth}_{0}+A$ of the family $\widetilde{\eth}_{0}$ in Assumption~\ref{ad.1} is
\[
        \rho_{A}= \frac{\widetilde{\eta}(\delta)}{|\Gamma|}- \eta(\delta)
\]
where $\delta> 0$ is taken small enough so that $\widetilde{\eta}(\delta)$ and
$\eta(\delta)$ are well-defined.
\label{ad.9}\end{definition}
By looking at the local variation of $\widetilde{\eta}(\delta)$ and $\eta(\delta)$, one can check
that $\rho_{A}$ does not depend on the choice of $\delta$.  Moreover, $\rho_{A}$ does not depend on the choices involved in the construction of $\widetilde{\eth}_{0}(\delta)+ \widehat{N}_{\cusp}(\widetilde{Q}(\delta))(0)$ since different choices would lead to operators that could be connected by a smooth path of invertible elliptic self-adjoint operators of the same form.  Thus, $\rho_{A}$ only
depends on the choice of perturbation $A$ in Assumption~\ref{ad.1}.

This rho invariant is the new ingredient needed to get an index formula for the Fredholm operator of Proposition~\ref{ad.4}.

\begin{theorem}
The index of the Fredholm operator $\eth^{+}_{X}+Q\in \Psi^{1}_{\cF}(X;\cE^{+},\cE^{-})$ in Proposition~\ref{ad.4} is given by
\[
   \Ind(\eth^{+}_{X}+Q)=
   \int_{X}\widehat{A}(X;g_{\cF})\Ch_{g_{\cF}}'(\cE)- \frac{1}{|\Gamma|}\int_{Y}\widehat{A}(Y,h)\widehat{\eta}(\widetilde{\eth}_{0}+A) + \frac{\rho_{A}}{2}
\]
where $\Ch_{g_{\cF}}'(\cE)$ denotes the Chern character form associated to the twisting curvature of $\cE$ (see \cite{BGV}) and $\widehat{\eta}(\widetilde{\eth}_{0}+A)$ is the 
eta form of the family of invertible self-adjoint operators $(\widetilde{\eth}_{0}+A)$ as described in \cite{Albin-Rochon1}, but using the convention of \cite{Albin-Rochon2} to avoid $2\pi i$ factors in the formula.  
\label{ad.10}\end{theorem}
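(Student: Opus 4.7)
The plan is to follow the deformation strategy of \cite{LMP}: realize $\eth^+_X+Q$ as the $\delta\to 0^+$ limit of a smooth family $(\eth^+_X+Q)(\delta)\in\Psi^1_{\cusp}(X;\cE(\delta))$ of fully elliptic cusp operators produced by the adiabatic construction recalled just before the theorem, compute the index of $(\eth^+_X+Q)(\delta)$ for small $\delta>0$ by a known APS-type formula, and pass to the limit. Section~8 of \cite{Melrose-Rochon06} ensures that for $\delta>0$ sufficiently small $(\eth^+_X+Q)(\delta)$ is Fredholm with
$$\Ind(\eth^+_X+Q)=\Ind\bigl((\eth^+_X+Q)(\delta)\bigr),$$
so the reduction itself is automatic; only the right-hand side needs to be computed.

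For each such fixed $\delta>0$, the cusp operator $(\eth^+_X+Q)(\delta)$ is covered by the $Y=\pt$ specialization of the index theorem for Fredholm perturbations of Dirac-type $\Phi$-operators established in \cite{Albin-Rochon1}, yielding
$$\Ind\bigl((\eth^+_X+Q)(\delta)\bigr)=\int_X\widehat A(X;g_{\cusp}(\delta))\,\Ch'_{g_{\cusp}(\delta)}(\cE(\delta))-\tfrac{1}{2}\eta(\delta),$$
with $\eta(\delta)$ the eta invariant of \eqref{ad.8}. The family $g_{\cusp}(\delta)$ degenerates to $g_{\cF}$ in a controlled way: the metrics agree outside a collar of $\pa X$, and in the collar only the conformal factor $(x+\delta)^{-2}$ multiplying $\Phi^*h$ changes. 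A local computation in the coordinates $(x,y,z)$ shows that the corresponding $\widehat A$-genus and twisting Chern character forms converge in $L^1$ to their counterparts for $g_{\cF}$, so
$$\lim_{\delta\to 0^+}\int_X\widehat A(X;g_{\cusp}(\delta))\,\Ch'_{g_{\cusp}(\delta)}(\cE(\delta))=\int_X\widehat A(X;g_{\cF})\,\Ch'_{g_{\cF}}(\cE).$$

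Everything is then reduced to computing $\lim_{\delta\to 0^+}\eta(\delta)$. By Definition~\ref{ad.9} one has $\eta(\delta)=\widetilde\eta(\delta)/|\Gamma|-\rho_A$ with $\rho_A$ independent of $\delta$, so this becomes a question about the adiabatic limit of $\widetilde\eta(\delta)$ on the covering $\widetilde M$. That is precisely the content of Theorem~\ref{ad.13}, the generalization of the Bismut--Cheeger formula to invertible perturbations of Dirac-type operators, which asserts
$$\lim_{\delta\to 0^+}\widetilde\eta(\delta)=2\int_Y\widehat A(Y,h)\,\widehat\eta(\widetilde\eth_0+A).$$
Substituting into the cusp index formula, passing to the limit, and using the independence of $\Ind(\eth^+_X+Q)$ from $\delta$ gives the stated identity after rearrangement. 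The main obstacle is thus Theorem~\ref{ad.13} itself: extending the heat-kernel adiabatic-limit arguments of Bismut and Cheeger to handle simultaneously the non-compact boundary geometry captured by the $\Phi$-calculus and the invertible smoothing perturbation $A$, so that a clean boundary contribution $2\int_Y\widehat A(Y,h)\,\widehat\eta(\widetilde\eth_0+A)$ emerges. Once that formula is available, the present theorem is a direct combination with Definition~\ref{ad.9} and the finite-group bookkeeping encoded in $\rho_A$.
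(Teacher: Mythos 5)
Your proposal is correct and follows essentially the same route as the paper: reduce to the cusp family $(\eth^+_X+Q)(\delta)$ via the adiabatic construction of \cite{Melrose-Rochon06}, apply the index formula of \cite{Albin-Rochon1}, and pass to the limit using the convergence of the $\widehat A\wedge\Ch'$ integrand (the paper's Lemma~\ref{ad.12}) together with Theorem~\ref{ad.13} and Definition~\ref{ad.9}. The only minor discrepancy is your aside that Theorem~\ref{ad.13} would require extending the Bismut--Cheeger heat-kernel arguments, whereas the paper deduces it instead by applying the index formula of \cite{Albin-Rochon1} twice on a cylinder $\pa\widetilde M\times[0,\epsilon]$; but since you invoke Theorem~\ref{ad.13} only as a stated ingredient, this does not affect the proof of the theorem at hand.
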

\begin{proof}
Our approach is inspired from \cite{LMP} and consists in taking the adiabatic limit of the index formula for $\eth^{+}_{X}(\delta)+Q(\delta)$.  On the one hand, we know by construction that 
\[
        \Ind(\eth^{+}_{X}+Q)= \Ind(\eth^{+}_{X}(\delta) +Q(\delta))
\] 
for $\delta>0$ sufficiently small.   On the other hand, according to
\cite{Albin-Rochon1}, we have the following formula for the index of $\eth^{+}_{X}(\delta) +Q(\delta)$,
\[
  \Ind(\eth^{+}_{X}(\delta)+Q(\delta))= \int_{X} \widehat{A}(X,g_{\cusp}(\delta))\Ch_{g_{\cusp}(\delta)}'(\cE_{\delta}) - \frac{\eta(\delta)}{2}.
\]
This can be rewritten as 
\begin{equation}
  \Ind(\eth^{+}_{X}(\delta)+Q(\delta))= 
\int_{X} \widehat{A}(X,g_{\cusp}(\delta))\Ch'_{g_{\cusp}(\delta)}(\cE_{\delta})
  - \frac{\widetilde{\eta}(\delta)}{2|\Gamma|} + \frac{\rho_{A}}{2}. 
\label{ad.11}\end{equation}
The results then follow by taking the limit as $\delta\to 0^{+}$ and using the following two adiabatic limits,
\begin{equation*}
\begin{gathered}
\lim_{\delta\to 0^{+}} \int_{X}\widehat{A}(X,g_{\cusp}(\delta))\Ch_{g_{\cusp}(\delta)}'(\cE_{\delta})= \int_{X}\widehat{A}(X,g_{\cF})\Ch_{g_{\cF}}'(\cE), \\
\lim_{\delta\to 0^+} \frac{\widetilde{\eta}(\delta)}{2}= 
  \int_{Y}\widehat{A}(Y,h)\widehat{\eta}(\eth_{0}+A),
\end{gathered}
\end{equation*}
which are established in  the next lemma and theorem.
\end{proof}

\begin{remark}
If $\widehat{N}_{\Phi}(\widetilde{\eth}^+)$ is invertible and $Q=0$, then one can use instead the adiabatic limit of the eta invariant recently obtained in \cite{Goette2011} to obtain a slightly different index formula.   
\label{Goette.1}\end{remark}

\begin{lemma}  We have the following adiabatic limit:
\[
\lim_{\delta\to 0^{+}} \int_{X}\widehat{A}(X,g_{\cusp}(\delta))\Ch_{g_{\cusp}(\delta)}'(\cE_{\delta})= \int_{X}\widehat{A}(X,g_{\cF})\Ch_{g_{\cF}}'(\cE).
\]
\label{ad.12}\end{lemma}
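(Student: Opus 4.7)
The plan is to interpret both sides as integrals of smooth $n$-forms on the compact manifold $X$ and to show that the integrand on the left-hand side depends smoothly on $\delta$ all the way down to $\delta=0$, with value $\widehat{A}(X,g_{\cF})\Ch'_{g_{\cF}}(\cE)$ at $\delta=0$; the limit then follows from ordinary continuity of integration over the compact space $X$. The geometric framework enabling this is the adiabatic tangent bundle ${}^{\ad}TX$ already invoked in the paragraph preceding the lemma: it is a smooth vector bundle on $X\times [0,\nu)_{\delta}$ whose restriction is ${}^{\cF}TX$ at $\delta=0$ and ${}^{\cusp}TX$ for $\delta>0$. Using an adiabatic local frame near $\pa X$ of the form $x^{2}\pa_{x}$, $(x+\delta)\pa_{y^{i}}$, $\pa_{z^{j}}$, the defining expression
\[
    \widetilde{g}_{\cusp}(\delta) = \frac{dx^{2}}{x^{4}} + \frac{\Phi^{*}h}{(x+\delta)^{2}} + \kappa
\]
is manifestly smooth jointly in $(p,\delta)$, so $g_{\cusp}(\delta)$ extends to a smooth family of fibre metrics on ${}^{\ad}TX$ over $X\times [0,\nu)_{\delta}$, reducing to $g_{\cF}$ at $\delta=0$.

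Next, I would verify that the Levi-Civita connection on ${}^{\ad}TX$ associated to this smooth family of metrics, and the Clifford connection on the adiabatic Clifford module $\cE_{\ad}$ constructed just before the lemma, are smooth connections on smooth bundles over $X\times [0,\nu)_{\delta}$.  A direct check in the adiabatic frame — using that the Christoffel symbols and the twisting curvature $F'(\cE_{\delta})$ are polynomial expressions in the components of the (now smooth) metric, its inverse, its first derivatives, and the Clifford connection one-form — shows that the curvature tensors $R^{g_{\cusp}(\delta)}$ on ${}^{\ad}TX$ and $F'(\cE_{\ad})$ on $\cE_{\ad}$ are smooth $\End$-valued $2$-forms on $X\times [0,\nu)_{\delta}$.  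Therefore, the Chern-Weil formulas
\[
    \omega(\delta) = \widehat{A}(X,g_{\cusp}(\delta))\Ch'_{g_{\cusp}(\delta)}(\cE_{\delta})
\]
define a smooth family of differential forms on $X$ depending smoothly on $\delta\in [0,\nu)$, with $\omega(0)=\widehat{A}(X,g_{\cF})\Ch'_{g_{\cF}}(\cE)$.

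Finally, since $X$ is a compact manifold with boundary and $\omega$ is a continuous (indeed smooth) family of smooth top-degree forms on $X$, the map $\delta\mapsto \int_{X}\omega(\delta)$ is continuous at $\delta=0$, which yields
\[
    \lim_{\delta\to 0^{+}}\int_{X}\widehat{A}(X,g_{\cusp}(\delta))\Ch'_{g_{\cusp}(\delta)}(\cE_{\delta}) = \int_{X}\widehat{A}(X,g_{\cF})\Ch'_{g_{\cF}}(\cE),
\]
as required.  The only real obstacle is the joint smoothness of the curvature forms at $\delta=0$; this is the content of the adiabatic construction of \cite{Melrose-Rochon06}, and the verification reduces, via the explicit adiabatic frame above, to observing that all singular behaviour of the coefficients of $g_{\cusp}(\delta)$ as $x\to 0$ is absorbed by the rescaled frame of ${}^{\ad}TX$, leaving smooth coefficients in $(x,y,z,\delta)$ up to $\delta=0$.
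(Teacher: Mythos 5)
Your overall strategy---realize the two sides as values of a family of Chern--Weil forms on $X$ parametrized by $\delta\in[0,\nu)$ and pass to the limit in the integral---is the right one, but the step you dismiss as a ``direct check'' is exactly where the proof lives, and the check comes out the opposite way from what you claim. The Levi-Civita connection and curvature of $g_{\cusp}(\delta)$, written as \emph{ordinary} forms on $X$ (which is what enters the integrand $\widehat{A}\wedge\Ch'$), are \emph{not} jointly smooth, nor even continuous, up to the corner $x=\delta=0$. In the orthonormal coframe $\omega^{0}=dx/x^{2}$, $\omega^{\alpha}=\Phi^{*}\hat\omega^{\alpha}/(x+\delta)$, $\omega^{\mu}$ one finds $\omega^{\alpha}_{0}=-\tfrac{x^{2}}{(x+\delta)^{2}}\Phi^{*}\hat\omega^{\alpha}+\cdots$, and the coefficient $x^{2}/(x+\delta)^{2}$ equals $1$ on $\{\delta=0,\ x>0\}$ and $0$ on $\{x=0,\ \delta>0\}$, so it has no continuous extension to $(0,0)$. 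Worse, the curvature $\Omega^{\alpha}_{0}$ contains the term $-\tfrac{2x\delta}{(x+\delta)^{3}}\,dx\wedge\Phi^{*}\hat\omega^{\alpha}$, whose coefficient behaves like $1/(4\delta)$ along $x=\delta$, i.e.\ it is genuinely unbounded as an ordinary $2$-form near the corner. So the family $\omega(\delta)$ is not a continuous family of forms in $C^{0}(X)$, and your appeal to ``ordinary continuity of integration'' has no foundation. (If instead you mean smoothness of the curvature valued in $\Lambda^{2}({}^{\ad}T^{*}X)$, that is a different statement and does not transfer to the integrand, because the identification $\Lambda^{n}({}^{\ad}T^{*}X)\cong\Lambda^{n}(T^{*}X)$ degenerates like $x^{-\dim Y-2}$ at the boundary; a priori the integrand could fail even to be integrable.)

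What actually saves the argument, and what the paper proves by explicitly solving the structure equations, is a cancellation at the level of the invariant polynomial: the singular term of $\Omega^{\alpha}_{0}$ carries a $dx$ factor, so it appears at most once in each monomial $\Omega^{i_{1}}_{i_{2}}\cdots\Omega^{i_{m}}_{i_{1}}$ of $\widehat{A}$, and the adjacent factor $\Omega^{0}_{\beta}$ or $\Omega^{0}_{\mu}$ contributes a compensating factor of $x$ or $(x+\delta)$. The upshot is only that the top-degree part of $\widehat{A}(X,g_{\cusp}(\delta))\Ch'_{g_{\cusp}(\delta)}(\cE_{\delta})$ is \emph{uniformly bounded} in $(x,\delta)$ (after localizing, choosing a local spin structure, and observing that the twisting curvature is $\delta$-independent); combined with pointwise convergence on the interior, dominated convergence then gives the limit of the integrals. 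Your proposal needs to be repaired by replacing the false smoothness/continuity claim with this uniform-boundedness-plus-dominated-convergence argument, and that repair requires the explicit computation of the connection and curvature forms that you omitted.
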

\begin{proof}
Clearly, the result will follow if for each $p$ in $\pa\widetilde{M}$, we can
find an open neighborhood $\cU$ of $p$ in $\widetilde{M}$ such that
\[
 \lim_{\delta\to 0^{+}} \int_{\cU}\widehat{A}(\widetilde{M},\widetilde{g}_{\cusp}(\delta))\Ch_{\widetilde{g}_{\cusp}(\delta)}'(\cE_{\delta})= \int_{\cU}\widehat{A}(\widetilde{M},g_{\Phi})\Ch_{g_{\Phi}}'(\cE).
\] 
If we take $\cU$ sufficiently small, we can assume ${}^{\Phi}T\widetilde{M}$ is trivial over $\cU$, so that in particular, over $\cU$, it admits a spin structure.  Let $S_{\cU}$ be the corresponding spinor bundle.  There is a complex vector bundle $E$ over $\cU$ inducing a decomposition
   $\cE= S_{\cU}\otimes E$
such that the Clifford connection of $\cE$ is of the form
\begin{equation}
    \nabla^{\cE}= \nabla^{S_{\cU}}\otimes 1+ 1\otimes \nabla^{E}
\label{ad.12b}\end{equation}
where $\nabla^{S_{\cU}}$ is the Clifford connection of $S_{\cU}$ and 
$\nabla^{E}$ is some connection on $E$.  The twisting curvature appearing in
$\Ch_{g_{\cusp}(\delta)}'(\cE)$ is then precisely the curvature of $\nabla^{E}$ so that 
\[ 
        \Ch_{g_{\cusp}(\delta)}'(\cE)= \Ch(\nabla^{E})= e^{\frac{i}{2\pi}(\nabla^{E})^{2}}\;\; \mbox{on} \;\cU.
\]
Thus, on $\cU$, we have
\begin{equation}
  \widehat{A}(X,\widehat{g}_{\cusp}(\delta))  \Ch_{g_{\cusp}(\delta)}'(\cE)= 
 \widehat{A}(X,\widehat{g}_{\cusp}(\delta))\Ch(\nabla^{E}).
\label{ad.12c}\end{equation}
Since $\Ch(\nabla^{E})$ is uniformly bounded and in fact has been chosen to be independent of $\delta$, the lemma will follow if we can show that 
$ \widehat{A}(X,g_{\cusp}(\delta))$ is uniformly bounded as a section of
$\Lambda^{*}(T^{*}\cU)$ as $x$ and $\delta$ approach zero.  

To do this, we can follow the approach of \cite{LMP}.  Since the adiabatic limit we are taking is only slightly different than the one of \cite{LMP}, the argument will be the same modulo minor changes.  We will include it for the sake of completeness.

Recall that if $\omega^{0},\ldots,\omega^{n-1}$ is any orthonormal set of one-forms, then the connection one-forms are given by the unique solution to
\begin{equation}
      d\omega^{i}= \omega^{j}\wedge \omega^{i}_{j}, \quad \omega^{i}_{j}= -\omega^{j}_{i},
\label{ad.12d}\end{equation}
where summation on repeated indices is intended.
The curvature two-forms are then given by
\begin{equation}
  \Omega^{j}_{i}= d\omega^{j}_{i}- \omega^{k}_{i}\wedge \omega_{k}^{j}.
\label{ad.12e}\end{equation}
We will choose our coframe as follows.  We first pick $\omega^{0}= \frac{dx}{x^{2}}$.  For $1\le \alpha\le k$, where $k=\dim Y$, we choose an orthonormal coframe $\hat{\omega}^{\alpha}$ in a neighborhood of $\Phi(p)$ in $(Y,h)$.  For $k+1\le \mu\le n-1$, we also pick a local vertical orthonormal coframe $\omega^{\mu}$  for the metric $\kappa$ near the point $p$.
It is vertical in the sense that it is orthogonal to horizontal forms with respect to the metric $g_{\cusp}(\delta)$.  As in \cite{LMP}, we will use the convention that Roman indices $i,j, \ldots$ vary between $0$ and $n$, while Greek indices $\alpha,\beta,\ldots$ vary between $1$ and $k$ and Greek indices $\mu,\nu,\ldots$ vary between $k+1$ and $n-1$.  If we set 
\begin{equation}
  \omega^{\alpha}=\frac{\Phi^{*}\hat{\omega}^{\alpha}}{x+\delta},
\label{ad.12f}\end{equation}
then $\omega^{0},\ldots,\omega^{n-1}$ is an orthonormal coframe in some small neighborhood $\cU$ of $p$ with respect to the metric $g_{\cusp}(\delta)$.  We first compute that 
\begin{equation}
\begin{aligned}
 d\omega^{0} &= 0 \\
             &\equiv \omega^{\alpha}\wedge \omega^{0}_{\alpha}+ \omega^{\mu}\wedge\omega^{0}_{\mu} \\
d\omega^{\alpha} &= -\frac{dx}{(x+\delta)^{2}}\wedge \Phi^{*}\hat{\omega}^{\alpha} + \frac{dx}{x+\delta}\wedge \Phi^{*}(\hat{\omega}^{\alpha})'+ \omega^{\beta}\wedge\Phi^{*}\hat{\omega}^{\alpha}_{\beta} \\
   &\equiv \omega^{0}\wedge\omega^{\alpha}_{0} + \omega^{\beta}\wedge\omega^{\alpha}_{\beta}+ \omega^{\mu}\wedge\omega^{\alpha}_{\mu} \\
d\omega^{\mu}&= dx\wedge( \omega^{\mu})' + (x+\delta)\omega^{\alpha}\wedge E^{\mu}_{\alpha} + \omega^{\nu}\wedge E^{\mu}_{\nu}  \\
  &\equiv \omega^{0}\wedge \omega^{\mu}_{0} + \omega^{\alpha}\wedge\omega^{\mu}_{\alpha} + \omega^{\nu}\wedge \omega^{\mu}_{\nu}
\end{aligned}
\label{ad.12g}\end{equation}
where $'$ denotes differentiation with respect to $x$ and $E^{j}_{i}$, here and below, denotes forms which are uniformly bounded in $x$ and $\delta$.   To study the asymptotic behavior of the unique solution to \eqref{ad.12g}, consider first in each slice $\pa\tM\times \{x\}$ the corresponding equation 
\begin{equation}
       du^{i}= u^{j}\wedge u^{i}_{j}, \quad u^{i}_{j}= -u^{j}_{i},
\label{nad.1}\end{equation}
for the orthonormal coframe $u^{1},\ldots,u^{n-1}$ of the metric $\Phi^{*}h(x)+\kappa(x)$ defined by
$u^{\alpha}= \Phi^{*}\hat{\omega}^{\alpha}$ and $u^{\mu}= \omega^{\mu}$.  Clearly, the connection one-forms $u^{i}_{j}$ given by the unique solution of \eqref{nad.1} are uniformly bounded in $x$ and independent of $\delta$.  If now we consider instead the metric 
$\frac{\Phi^{*}h(x)}{(x+\delta)^{2}} +\kappa(x)$ on the slice $\pa\tM\times \{x\}$, then $v^{1},\ldots,v^{n-1}$ with $v^{\alpha}= \frac{\Phi^{*}\omega^{\alpha}}{x+\delta}$, $v^{\mu}= \omega^{\mu}$ is an associated orthonormal coframe.  The connection one-forms $v^{i}_{j}$ obtained by solving the equation
\begin{equation}
   dv^{\alpha}= v^{\beta}\wedge v^{\alpha}_{\beta}+ v^{\mu}\wedge v^{\alpha}_{\mu}, \quad
   dv^{\mu}= v^{\alpha}\wedge v^{\mu}_{\alpha}+ v^{\nu}\wedge v^{\mu}_{\nu}, \quad v^{i}_{j}= -v^{j}_{i}, 
\label{nad.2}\end{equation}   
are then seen to satisfy
\begin{equation}
   v^{\mu}_{\alpha}= (x+\delta)u^{\mu}_{\alpha}, \quad v^{\mu}_{\nu}=u^{\mu}_{\nu}, \quad 
     v^{\alpha}_{\beta}= \Phi^{*}\hat{\omega}^{\alpha}_{\beta}+ (x+\delta)^{2}E^{\alpha}_{\beta}.
\label{nad.3}\end{equation}
From there, one can check that the connection one-forms solving \eqref{ad.12g} take the form
\begin{equation}
\begin{gathered}
  \omega^{\mu}_{0}= x^{2}E^{\mu}_{0}, \quad \omega^{\mu}_{\nu}= v^{\mu}_{\nu}+ e^{\mu}_{\nu}dx,  \quad \omega^{\mu}_{\alpha}= v^{\mu}_{\alpha}+ (x+\delta)e^{\mu}_{\alpha} dx, \\
  \omega^{\alpha}_{0}= \frac{-x^{2}}{(x+\delta)^{2}}\Phi^{*}\hat{\omega}^{\alpha} + \frac{x^{2}E^{\alpha}_{0}}{x+\delta}+ x^{2}(x+\delta)E^{\alpha}_{0}, \quad \omega^{\alpha}_{\beta}= v^{\alpha}_{\beta}+ e^{\alpha}_{\beta}dx,
  \end{gathered} 
\label{nad.4}\end{equation}
where $e^{i}_{j}$ denotes functions uniformly bounded in $x$ and $\delta$.  Thus, we have obtained
\begin{equation}
\begin{gathered}
  \omega^{\alpha}_{0}= - \frac{x^{2}}{(x+\delta)^{2}} \Phi^{*}\hat{\omega}^{\alpha} + \frac{x^{2} E^{\alpha}_{0}}{x+\delta}, \quad \omega^{\mu}_{0}= x^{2}E^{\mu}_{0}, \\
\omega^{\beta}_{\alpha}= \Phi^{*}\hat{\omega}^{\beta}_{\alpha}+(x+\delta)^{2}E^{\beta}_{\alpha}+ e^{\beta}_{\alpha}dx , \quad \omega^{\nu}_{\mu} =E^{\nu}_{\mu}, \quad \omega^{\mu}_{\alpha}= (x+\delta)E^{\mu}_{\alpha}.
\end{gathered}
\label{ad.12h}\end{equation}
Using \eqref{ad.12e}, one can then compute the curvature forms to get
\begin{equation}
\begin{aligned}
\Omega^{\alpha}_{0}&= dx\wedge \left( -\frac{2x\delta}{(x+\delta)^{3}} \Phi^{*}\hat{\omega}^{\alpha} +F^{\alpha}_{0}\right) + xF^{\alpha}_{0}, \\
\Omega^{\mu}_{0}&= (x+\delta)F^{\mu}_{0}, \\
\Omega^{\beta}_{\alpha}&= F^{\beta}_{\alpha}, \\
\Omega^{\mu}_{\nu}&= F^{\mu}_{\nu}, \\
\Omega^{\mu}_{\alpha}&= dx\wedge F^{\mu}_{\alpha} + (x+\delta)F^{\mu}_{\alpha},
\end{aligned}
\label{ad.12i}\end{equation}
where again the notation $F^{j}_{i}$ denotes terms uniformly bounded in $x$ and $\delta$, but not necessarily smooth at $x=\delta=0$.  As in \cite{LMP}, only the computation of the first term is more delicate,
\begin{equation}
\begin{aligned}
\Omega^{\alpha}_{0}&= d\omega^{\alpha}_{0}- \omega^{\beta}_{0}\wedge \omega^{\alpha}_{\beta}- \omega^{\mu}_{0}\wedge\omega^{\alpha}_{\mu}  \\
&= -d\left(\frac{x^{2}}{(x+\delta)^{2}}\right) \Phi^{*}\hat{\omega}^{\alpha} +
 dx\wedge F^{\alpha}_{0}- \frac{x^{2}}{(x+\delta)^{2}} \Phi^{*}(d\hat{\omega}^{\alpha}- \hat{\omega}^{\beta}\wedge \hat{\omega}^{\alpha}_{\beta}) + xF^{\alpha}_{0},
\end{aligned}
\label{ad.12j}\end{equation}
but since  $d\hat{\omega}^{\alpha}- \hat{\omega}^{\beta}\wedge \hat{\omega}^{\alpha}_{\beta}=0$ by the structure equation for the connection one-forms on $(Y,h)$, the third term vanishes so that $\Omega^{\alpha}_{0}$ is of the claimed form.

Now, the form $\widehat{A}(X,g_{\cusp}(\delta))$ is locally a combination of terms of the form 
\begin{equation}
  \sum \Omega_{i_{1}}^{i_{2}}\Omega_{i_{2}}^{i_{3}}\cdots \Omega^{i_{1}}_{i_{m}}.
\label{ad.12k}\end{equation}
According to \eqref{ad.12i}, the only way we could get an unbounded term is if $\Omega^{\alpha}_{0}$ appears, in fact only the first term $-\frac{2x\delta}{(x+\delta)^{3}} dx\wedge \hat{\omega}^{\alpha}$ would potentially create a problem.  Since it involves a $dx$ factor, it can appears at most once in each terms involved in \eqref{ad.12k} and we can assume it appears as the first curvature term there.  In that case, the last term would be $\Omega^{0}_{\beta}$ or $\Omega^{0}_{\mu}$.  If it is $\Omega^{0}_{\beta}$, only $xF^{0}_{\beta}$ will contribute since the singular term of $\Omega^{\alpha}_{0}$ already contains a $dx$ factor.  For $\Omega^{0}_{\mu}$, we already computed that $\Omega^{0}_{\mu}=(x+\delta)F^{\mu}_{0}$.  Thus, in both cases, we have a vanishing factor compensating for the singular term of $\Omega^{0}_{\alpha}$, so that $\widehat{A}(X,g_{\cusp}(\delta))$ is uniformly bounded in $x$ and $\delta$.  This is easily seen to imply the result.

\end{proof}

\begin{theorem}[Adiabatic limit]
For the family $\widetilde{\eth}_{0}(\delta)+ \widetilde{Q}(\delta)$ described above, 
\[
    \lim_{\delta\to 0^{+}}\frac{\widetilde{\eta}(\delta)}{2}=
     \int_{Y}\widehat{A}(Y,h)\widehat{\eta}(\widetilde{\eth}_{0}+A).
\]
\label{ad.13}\end{theorem}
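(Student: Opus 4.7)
The plan is to extract $\widetilde{\eta}(\delta)/2$ from two applications of the Albin-Rochon index formula \cite{Albin-Rochon1} to the same Fredholm operator realized both as a $\Phi$-operator and as a cusp operator on a common compact auxiliary manifold, then pass to the adiabatic limit.

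First I would construct a compact $\Gamma$-manifold with boundary $\widetilde{W}$, with $\pa\widetilde{W}$ $\Gamma$-equivariantly diffeomorphic to $\pa\widetilde{M}$ as the total space of the fibration $\Phi$ over $Y$; concretely, take $\widetilde{W}=\pa\widetilde{M}\times [0,1]_x$ together with a $\Gamma$-invariant symmetric cap at $x=1$ (for instance obtained by doubling) chosen so that its contribution to the eta invariants vanishes. I would $\Gamma$-equivariantly extend the product-type $\Phi$-metric $g_\Phi$, the Clifford module $\widetilde{\cE}$ with its Clifford connection, and the perturbation $\widetilde{Q}$ to $\widetilde{W}$ so that the resulting Dirac-type operator $\widetilde{D}_W+\widetilde{Q}_W\in \Psi^1_{\Phi,\Gamma}(\widetilde{W};\widetilde{\cE}_W)$ agrees with $\widetilde{\eth}^+ + \widetilde{Q}$ near $x=0$. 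By Proposition~\ref{ad.4}, this operator is Fredholm, and by the Melrose-Rochon adiabatic construction already used in the proof of Theorem~\ref{ad.10}, it deforms to a family of fully elliptic cusp Dirac-type operators $\widetilde{D}_W(\delta)+\widetilde{Q}_W(\delta)$ of the same index for $\delta>0$ small.

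Applying the Albin-Rochon index formula \cite{Albin-Rochon1} to $\widetilde{D}_W+\widetilde{Q}_W$ as a $\Phi$-operator and to $\widetilde{D}_W(\delta)+\widetilde{Q}_W(\delta)$ as a cusp operator (the $Y=\pt$ case of the same formula), and equating the two, yields
\begin{equation*}
\int_{\widetilde{W}} \widehat{A}(\widetilde{W},g_\Phi)\Ch'_{g_\Phi}(\widetilde{\cE}_W) - \int_Y \widehat{A}(Y,h)\widehat{\eta}(\widetilde{\eth}_0+A) = \int_{\widetilde{W}} \widehat{A}(\widetilde{W},\widetilde{g}_\cusp(\delta))\Ch'_{\widetilde{g}_\cusp(\delta)}(\widetilde{\cE}_{W,\delta}) - \frac{\widetilde{\eta}(\delta)}{2},
\end{equation*}
using that, by construction, the fibred cusp boundary family at $x=0$ is precisely $\widetilde{\eth}_0+A$ while the cusp boundary operator at $\tau=0$ is $\widetilde{\eth}_0(\delta)+\widehat{N}_\cusp(\widetilde{Q}(\delta))(0)$.

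Rearranging and letting $\delta\to 0^+$, the two interior integrals cancel in the limit by Lemma~\ref{ad.12} (whose estimates are local near the fibred cusp face and extend verbatim to $\widetilde{W}$; the remainder of $\widetilde{W}$ contributes $\delta$-independently by our choice of extension), producing the claimed identity. The principal obstacle is the $\Gamma$-equivariant choice of $\widetilde{W}$ and of the extended Clifford data so that $\widetilde{Q}_W$ remains a Fredholm perturbation and the auxiliary boundary at $x=1$ contributes nothing to either eta invariant; once such choices are made, the argument reduces to an invocation of the already-proved index formulas, the homotopy invariance of the index in the adiabatic calculus, and the adiabatic limit lemma.
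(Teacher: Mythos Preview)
Your strategy is essentially the one the paper uses: build an auxiliary compact manifold carrying both a $\Phi$-structure and a cusp structure at the fibred-cusp end, apply the Albin--Rochon index formula in each regime, equate the two expressions by index invariance under the adiabatic deformation, and then take $\delta\to 0^{+}$ using Lemma~\ref{ad.12}.

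The one place where you diverge from the paper, and where you yourself flag the ``principal obstacle,'' is the construction of $\widetilde{W}$. You propose to cap off the end at $x=1$ so that $\widetilde{W}$ has a single boundary component, and then must arrange that this cap contributes nothing. This is unnecessary and is exactly the difficulty the paper sidesteps. The paper takes $\widetilde{W}=\pa\widetilde{M}\times[0,\epsilon]$ with \emph{two} boundary components: at $x=0$ it carries the $\Phi$-structure (which becomes a cusp end for $\delta>0$), and at $x=\epsilon$ it carries an ordinary cusp end independent of $\delta$. The index formulas then each pick up an additional eta term $\widetilde{\eta}_{\epsilon}/2$ from the $x=\epsilon$ end, but since the metric, Clifford data, and normal operator there are chosen independent of $\delta$, this term is the \emph{same} in both formulas and cancels upon subtraction. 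No capping, no bordism problem, no need to control a spurious contribution.

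So your argument is correct once you replace the cap by a second cusp end; the cancellation of the extra boundary term is then automatic rather than something to be engineered.
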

\begin{proof}
Because of Lemma~\ref{ad.12}, or rather, its analog on $\widetilde{M}$, such
a formula for the adiabatic limit of the eta invariant is basically equivalent to the index formula of \cite{Albin-Rochon1}.  Indeed, changing the metrics $\widetilde{g}_{\cusp}(\delta)$ near $x=\frac{\epsilon}{2}$ to be of product-type,
we can then attach an infinite cylindrical end at $x=\frac{\epsilon}{2}$.  One way to achieve this is to extend the metric $\widetilde{g}_{\cusp}(\delta)$ in the region where $\frac{\epsilon}{2}<x<\epsilon$ by the metric
\begin{equation}
   \frac{d\rho^{2}}{\rho^{4}}+ \widetilde{g}_{\frac{\epsilon}{2}}
\label{ad.13b}\end{equation}
where $\widetilde{g}_{\frac{\epsilon}{2}}$, which is independent of $\delta$, is the restriction of $\widetilde{g}_{\cusp}(\delta)$ to the hypersurface $x=\frac{\epsilon}{2}$ and $\rho\in \CI(\pa\widetilde{M}\times [0,\epsilon])$ is a  
boundary defining function equal to $x$ when $x< \frac{3\epsilon}{4}$ and 
$\epsilon-x$ when $x>\frac{7\epsilon}{8}$.   

  With respect to this metric on $\pa\widetilde{M}\times (0,\epsilon)$, the operator $\widetilde{\eth}(\delta)$ can be extended to a cusp Dirac-type operator 
  $\widetilde{D}(\delta)$ on $\pa\widetilde{M}\times[0,\epsilon]$.  Since the normal operator at $x=0$ of $\widetilde{D}^{+}(\delta)$ admits an invertible perturbation 
$\widehat{N}_{\cusp}(\widetilde{D}^{+}(\delta)+ \widetilde{Q}(\delta))$, so does
the normal operator at $x=\epsilon$.  Thus, extending the perturbation $\widetilde{Q}(\delta)$ adequately in the cylindrical end that was added in such a way that the normal operator of $\widetilde{D}^{+}(\delta)+\widetilde{Q}(\delta)$  at $x=\epsilon$ is invertible and independent of $\delta$, we get in this way a family of fully elliptic 
cusp operators $(\widetilde{D}^{+}(\delta)+\widetilde{Q}(\delta))\in \Psi^{1}_{\cusp}(\pa\widetilde{M}\times [0,\epsilon];\cE^{+},\cE^{-})$.  In particular, this is a Fredholm operator and we have

\begin{equation}
 \Ind(\widetilde{D}^{+}(\delta)+ \widetilde{Q}(\delta))=
  \int_{\pa \widetilde{M}\times [0,\epsilon]} \widehat{A}(\widetilde{g}_{\cusp}(\delta))\Ch_{\widetilde{g}_{\cusp}(\delta)}'(\cE_{\delta}) - \frac{\widetilde{\eta}(\delta)}{2}
   - \frac{\widetilde{\eta}_{\epsilon}}{2}
\label{ad.14}\end{equation}
by the index formula of \cite{Albin-Rochon1}, where $\widetilde{\eta}_{\epsilon}$ is the eta invariant coming from the normal operator of $\widetilde{\eth}^{+}(\delta)+\widetilde{Q}(\delta)$ at $x=\epsilon$.  On the other hand,  we have also
\begin{equation}
\begin{aligned}
 \Ind(\widetilde{D}^{+}(\delta)+ \widetilde{Q}(\delta)) &=
\Ind(\widetilde{D}^{+}+\widetilde{Q}) \\
   &= 
\int_{\pa \widetilde{M}\times [0,\epsilon]}
\widehat{A}(g_{\Phi})\Ch_{g_{\Phi}}'(\cE) - \int_{Y}\widehat{A}(Y,h)\widehat{\eta}(\widetilde{\eth}_{0}+A) -
\frac{\widetilde{\eta}_{\epsilon}}{2},
\end{aligned}
\label{ad.15}\end{equation}
using the index formula of \cite{Albin-Rochon1}.  

Thus, since we assume that  the metrics $g_{\cusp}(\delta)$ stay fixed in the region where $x\ge \frac{\epsilon}{2}$  and that the normal operator $\widetilde{D}^{+} + \widetilde{Q}$ at $x=\epsilon$ and its eta invariant $\widetilde{\eta}_{\epsilon}$ are independent of $\delta$,
taking the limit $\delta\to 0^{+}$ in \eqref{ad.14} and using Lemma~\ref{ad.12} and equation~\eqref{ad.15}, the result follows.

\end{proof}

\begin{remark}
The case $\Gamma=\{0\}$ and $A=0$ with $\eth_{0}$ an invertible family of Dirac-type operators gives back the formula of Bismut and Cheeger \cite{Bismut-Cheeger} for the adiabatic limit of the eta invariant.  Moreover, as it is clear from the proof of the theorem, the total space of the fibration does not have to be the total boundary of a compact manifold with boundary.  
\label{ad.16}\end{remark}
\begin{remark}
Theorem~\ref{ad.13} should be compared with the recent result of Sebastian Goette \cite{Goette2011}, where more general foliations are considered, namely Seifert fibrations on orbifolds that are not necessarily good.  On the other hand, invertible perturbations of the family of Dirac-type operators are not considered there.  The formula in \cite{Goette2011} is different from the one in Theorem~\ref{ad.13} and provides an alternative point of view.  
\label{Goette.2}\end{remark}

\section{Some applications} \label{qmtn.0}

Let us now give a way of obtaining Dirac operators satisfying Assumption~\ref{ad.1}.
Assume that the $\cF$-tangent bundle ${}^{\cF}TX$ is spin as well as ${}^{\cF}N\pa X$ and $T\cF$.  Fix an orientation and a spin structure on ${}^{\cF}TX$ and ${}^{\cF}N\pa X$.  A choice of $\cF$-metric
whose restriction to $M$ lift to a $\Gamma$-invariant $\Phi$-metric as in \eqref{it.1} induces a decomposition 
\[
       \left.{}^{\cF}TX\right|_{\pa X}\cong {}^{\cF}N\pa X \oplus T\cF,
\]
which automatically induces a spin structure on $T\cF$.  Let $S_{X}$, $S_{N}$
and $S_{T\cF}$ denote the corresponding spinor bundles.   As a Clifford module, we can take $\cE=S_{X}$ equipped with its canonical  Clifford connection and get in this way the Dirac operator $\eth_{X}\in \Psi^{1}_{\cF}(X;S_{X})$.
In this case, the decomposition \eqref{it.10} also induces a decomposition
of spinor bundles
\begin{equation}
      \widetilde{S}_{0}=\widetilde{\cE}_{0}= \Phi^{*}S_{Y}\hat{\otimes}
S_{\pa\widetilde{M}/Y}.
\label{ad.17b}\end{equation}
Under this decomposition, the family of operators $\widetilde{\eth}_{0}$ on $\pa\widetilde{M}$ takes the form
\begin{equation}
    \widetilde{\eth}_{0}= \Id_{\Phi^{*}S_{Y}}\hat{\otimes} \eth_{\kappa}
\label{ad.17c}\end{equation}
where $\eth_{\kappa}\in \Psi^{1}(\pa\widetilde{M}/Y;S_{\pa\widetilde{M}/Y})$ is the family of Dirac operator associated to the spinor bundle $S_{\pa\widetilde{M}/Y}$ and the family of metrics $\kappa$.  By Lichnerowicz formula \cite{Lichnerowicz}, if we assume the family of metrics $\kappa$ has positive scalar curvature, then the families $\eth_{\kappa}$ and $\widetilde{\eth}_{0}$ will be invertible, so that Assumption~\ref{ad.1} is automatically satisfied with $A=0$.

\begin{example}
Consider the manifold $\tX= \bbR^{2m-1}\times \bbS^3$ with the Euclidean metric on $\bbR^{2m-1}$ and the standard round metric on the $\bbS^3$ factor. Let $k\in \bbN$ be an odd number and let $\bbZ_k\subset \SU(2)=\bbS^3$ act on $\bbS^3$ by right multiplication and on $\bbR^{2k-1}$ by letting its generator act by a rotation $R$ such that $R^k$ is the identity (\cf Example~\ref{sm.2}).  Then $\bbZ_k$ acts freely and properly discontinuously on $\tX$ by isometries.  This induces an $\cF$-metric $g_{\cF}$ on the quotient $X=\tX/\bbZ_k$.  Since $k$ is odd, the map $q^*: H^{*}(X;\bbZ_2)\to H^{*}(\tX;\bbZ_2)$ is injective (see Proposition~3G.1 in \cite{Hatcher}).  In particular, $X$ is spin and has only one spin structure.  The corresponding Dirac operator satisfies Assumption~\ref{ad.1} with $A=0$.  By Lichnerowicz formula, its index vanishes.  In fact all the terms in the index formula of Theorem~\ref{ad.10} are zero.  If $2m+2$ is a multiple of $4$, we can obtain an example with a non-vanishing index by taking the connected sum $W=X\# V$ with a compact spin manifold V with non-vanishing $\hat{A}$-genus. Keeping the same metric near infinity, the index of the Dirac operator on $W$ is then given by the $\hat{A}$-genus of $V$.         
\label{ad.19}\end{example} 

\begin{example}
One can go through the previous example with $\bbS^3$ replaced by $\bbS^1$.  Provided we choose the spin structure on $X= (\bbR^{2m-1}\times \bbS^1)/\bbZ_k$ so that the induced spin structure on $\bbS^1$ is the non-trivial one, we will have that $\eth_{\kappa}$ will be invertible, so that the Dirac operator on $X$ will be Fredholm, in fact invertible.  Again, one can get an example of Dirac operator with non-vanishing index by taking the connected sum with a spin manifold having a non-vanishing $\hat{A}$-genus.     
\label{ad.20}\end{example}


As explained in Example~\ref{multiT.1}, a natural way of obtaining an $\cF$-metric is to look at quotients of multi-Taub-NUT metrics.  Recall that the multi-Taub-NUT metric of type $A_{k-1}$ can be constructed explicitly using the Gibbons-Hawking ansatz \cite{Gibbons-Hawking}.  Let $\pi: \tX_0\to \bbR^3\setminus \{p_1,\ldots,p_k\}$ be the principle $\bbS^1$-bundle whose first Chern class is $-1$ when restricted to small spheres about the `monopole' points $\{p_1,\ldots, p_k\}$.  Consider the function 
\[
      V= 1+ \frac{1}{2}\sum_{i=1}^k \frac{1}{|x-p_i|}
\]
and equip the $\bbS^1$-bundle $\tX_0$ with the connection $1$-form $\alpha$ such that $d\alpha= \pi^*(*dV)$.  Then the metric
\begin{equation}
     \widetilde{g}_{\ALF}= V( dx_1^2+ dx_2^2 + dx^3) + V^{-1} \alpha^2,
\label{qmtn.1}\end{equation}  
on $\tX_0$ can be smoothly extended to a hyperk\"ahler metric on a manifold $\tX$.  The difference $\tX\setminus \tX_0$ consists of $k$ points $\{\tilde{p}_1,\ldots,\tilde{p}_k\}$ and the map $\pi$ on $\tX_0$ can be extended to a map $\pi:\tX\to \bbR^3$ such that $\pi(\tilde{p}_i)=p_i$.  The metric $\widetilde{g}_{\ALF}$ is an example of fibred boundary metric with fibration at infinity given by a circle bundle $\Phi:\pa \tX\to \bbS^2$ of degree $k$.  Here, we also denote by $\tX$  the manifold with boundary obtained from $\tX$ by adding a suitable boundary at infinity.  

The fact $\widetilde{g}_{\ALF}$ is hyperk\"ahler implies in particular that the manifold $\tX$ is spin.  Since $\tX$ is simply-connected, it has in fact a unique spin structure.  Let $\widetilde{\eth}_{\ALF}$ be the Dirac operator specified by this spin structure and the metric $\widetilde{g}_{\ALF}$.  
\begin{lemma}
The unique spin structures on $\tX$ and $\bbS^2$ induce the trivial spin structure on the $S^1$ fibres.  In particular, the Dirac operator $\widetilde{\eth}_{\ALF}$ is not Fredholm.
\label{qmtn.2}\end{lemma}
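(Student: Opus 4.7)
The plan is to combine the explicit form \eqref{it.6} of the Fourier transform of the normal operator with the fact that the family of circle Dirac operators along the fibres has a non-trivial kernel, which in turn follows from identifying the spin structure that $\tX$ induces on each fibre of $\Phi$.

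First, I would identify the induced spin structure on each $\bbS^{1}$-fibre. Since $\widetilde{g}_{\ALF}$ is hyperk\"ahler, its restricted holonomy lies in $\operatorname{Sp}(1)=\SU(2)\subset\operatorname{Spin}(4)$, and so the positive spinor bundle $S^{+}_{\tX}$ admits two linearly independent nowhere-vanishing parallel sections -- the parallel spinors associated with the three covariantly constant K\"ahler forms. The tri-holomorphic $\bbS^{1}$-action coming from the Gibbons--Hawking ansatz commutes with the whole hyperk\"ahler structure, so its lift to the spin bundle fixes the parallel spinors pointwise, making them $\bbS^{1}$-invariant. Their restriction to any $\bbS^{1}$-fibre $L$ of $\Phi$ is therefore a nowhere vanishing $\bbS^{1}$-invariant section of $S^{+}_{\tX}|_{L}$. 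Such a single-valued invariant section exists only if the induced spin structure on $L$ is the periodic (Lie-group, non-bounding) one, since with the antiperiodic (bounding) spin structure, $\bbS^{1}$-equivariance forces single-valued spinors to vanish identically.

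Next, with this periodic spin structure and a metric of length $\ell$ on $\bbS^{1}$, the Dirac operator has spectrum $\{2\pi n/\ell : n\in\bbZ\}$, with kernel spanned by the constant spinors. Applied along the fibres of $\Phi$, the family $\eth_{\kappa}\in\Psi^{1}(\pa\widetilde{M}/Y; S_{\pa\widetilde{M}/Y})$ has a non-trivial kernel over every $y\in Y=\bbS^{2}$; by the tensor product formula \eqref{ad.17c}, so does $\widetilde{\eth}_{0}$.

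Finally, evaluating \eqref{it.6} at the origin $(\delta,\tau)=(0,0)\in T^{*}_{y}Y\times\bbR$ gives $\widehat{N}_{\Phi}(\widetilde{\eth}_{\ALF}^{+})(y,0,0)=\widetilde{\eth}_{0}|_{y}$, which has non-trivial kernel by the previous step. Hence the normal operator fails to be invertible and, by the Mazzeo--Melrose Fredholm criterion for $\Phi$-operators, neither $\widetilde{\eth}_{\ALF}^{+}$ nor the full Dirac operator $\widetilde{\eth}_{\ALF}$ is Fredholm on any weighted Sobolev space. The hard part is the spin structure identification in the first step: using the parallel spinors provided by hyperk\"ahlerity is the cleanest route, as it bypasses the delicate analysis of framings of the fibre's normal bundle in the four-manifold $\tX$.
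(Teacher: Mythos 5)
Your reduction of the lemma to the identification of the fibre spin structure, and the subsequent Fredholm deduction via \eqref{it.6}, \eqref{ad.17c} and the non-invertibility of the normal operator at $(\delta,\tau)=(0,0)$, is correct and agrees with what the paper intends. The spin-structure identification itself is where you diverge from the paper, which argues by contradiction: if the fibres carried the bounding (antiperiodic) structure, the spin structure of $\tX$ would extend over the closed manifold obtained by gluing in the degree-$k$ disc bundle over $\bbS^2$, and that manifold contains an embedded sphere of self-intersection $-1$, contradicting the evenness of the intersection form of a spin $4$-manifold.

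Your replacement argument has a genuine gap at its key step. The ``induced spin structure on the fibre'' is not an intrinsic attribute of the circle $L\subset\pa\tX$ together with the ambient spin structure: it is defined through the splitting \eqref{ad.17b}, $\tS_{0}=\Phi^{*}S_{Y}\hat\otimes S_{\pa\tM/Y}$, so it depends on the chosen horizontal complement and on the identification of the horizontal spinor factor with $\Phi^{*}S_{Y}$. Changing the framing of the normal bundle of $L$ by a loop representing the nontrivial class in $\pi_1(\SO(3))$ flips the induced spin structure between periodic and antiperiodic while leaving $S^{+}_{\tX}|_{L}$, its $\bbS^{1}$-equivariant structure, and your nowhere-vanishing invariant section completely unchanged. (Concretely: in the flat model $\bbC^{2}$ with the weight-$(1,-1)$ triholomorphic action, the orbit $\{(\epsilon e^{i\theta},0)\}$ carries a nowhere-vanishing invariant constant spinor, yet with respect to the framing that extends over the disc $\{(z_{1},0)\}$ its induced spin structure is the bounding one.) So the implication ``nowhere-vanishing $\bbS^{1}$-invariant section of $S^{+}_{\tX}|_{L}$ implies periodic structure on $L$'' cannot be correct as stated. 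What your invariant spinor actually shows is only that rotation by $2\pi$ acts as $+1$ on $S_{\tX}|_{L}$; to conclude that it acts as $+1$ on the factor $S_{\pa\tM/Y}|_{L}$ (which is what ``periodic'' means) you must also show that it acts as $+1$ on the complementary factor $\Phi^{*}S_{Y}|_{L}$. That is the real content, and it follows from the $\bbS^{1}$-invariance of the Gibbons--Hawking connection together with the triviality of the induced action on the base; once you have it, the nowhere-vanishing property of the parallel spinors becomes irrelevant, since only the $\bbZ_{2}$-valued question of whether the circle action lifts to an honest $\bbS^{1}$-action on the spinor bundle enters. As written, your proof omits this horizontal-factor analysis, and without it the spin-structure identification does not go through. (A smaller point: the parallel spinors live on the interior, so one should either work with interior orbits and an isotopy to the boundary fibres, or justify that they extend to $\pa\tX$ as sections of the rescaled spinor bundle.)
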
      
\begin{proof}
Suppose for a contradiction that the spin structure induced on the $\bbS^1$ fibres is the non-trivial one.  Since the non-trivial spin structure bounds the disk, this means the spin structure on $\tX$ can be extended to the closed manifold $\tV=W\cup_{\pa\tX} \tX$ where $W$ is the disk bundle of degree $k$ on $\bbS^2$ with $\pa W= \pa \tX$.  In particular, there is an embedded sphere in $\tV$ of self-intersection $-1$  that contains $\tilde{p}_1$ and intersects the base of the disk bundle $W$ at one point.  By the criterion of \cite[Theorem~2.10]{LM}, such a curve cannot exist if $\tV$ has a spin structure.  To avoid a contradiction, we must conclude the induced spin structure on the $\bbS^1$ fibres is the trivial one.   
 
\end{proof}

Even if $\widetilde{\eth}_{\ALF}$ is not Fredholm, it is still possible to perturb it by a smoothing operator to make it Fredholm.  Indeed, the family of Dirac operators $\eth_{\kappa}$ on the $\bbS^1$ fibres of $\Phi: \pa\tX\to \bbS^2$ all have one-dimensional kernels that patch together to form a trivial line bundle over $\bbS^2$.  Let $A$ be the orthogonal projection onto this kernel bundle with respect to the $L^2$-norm specified by the family of metrics $\kappa$.  This can be extended to a perturbation as in Assumption~\ref{ad.1} by asking it to anti-commute with Clifford multiplication by odd sections of $\Phi^{*}\Cl_{h}(Y)$.  If $\tS= \tS^+\oplus \tS^-$ is the spinor bundle on $\tX$, then taking $\tQ\in \Psi^{-\infty}(\tX;\tS^+,\tS^-)$ such that    
\begin{equation}
       \widehat{N}_{\Phi}(\tQ)= \rho A,
\label{qmtn.3}\end{equation}
where $\rho\in \cS(T^{*}\bbS^2\times \bbR)$ is a real-valued Schwartz function equal to $1$ on the zero section, we obtain a Fredholm perturbation $\widetilde{\eth}_{\ALF}^+ +\tQ$ of $\widetilde{\eth}^+_{\ALF}$.  Using the index theorem of \cite{Albin-Rochon1}, we can compute its index.  

\begin{proposition}
The index of $\widetilde{\eth}_{\ALF}^+ +\tQ$ is zero.  
\label{qmtn.4}\end{proposition}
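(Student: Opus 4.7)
My plan is to apply the index theorem of Theorem~\ref{ad.10}. Since the $\bbS^1$-bundle at infinity on $\tX$ has no further quotient ($\Gamma$ is trivial in this example), we are exactly in the $\Phi$-calculus setting of \cite{Albin-Rochon1}: the rho invariant term is absent and $|\Gamma|=1$. Because $\cE=\widetilde{S}$ is the untwisted spinor bundle, $\Ch'_{\widetilde{g}_{\ALF}}(\widetilde{S})=1$, and because $\bbS^2$ is two-dimensional, $\widehat{A}(\bbS^2,h)=1$. The formula therefore reduces to
\[
\Ind(\widetilde{\eth}^+_{\ALF}+\tQ)= \int_{\tX}\widehat{A}(\tX,\widetilde{g}_{\ALF}) - \int_{\bbS^2}\widehat{\eta}(\widetilde{\eth}_0+A),
\]
where on the right we pick out the top-degree (degree-$2$) component of $\widehat{\eta}$. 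It therefore suffices to compute the two terms on the right and verify their cancellation.

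For the bulk term, I would use the Gibbons--Hawking expression \eqref{qmtn.1} for $\widetilde{g}_{\ALF}$ to compute the Pontryagin form $p_1(\widetilde{g}_{\ALF})$ pointwise. In dimension four $\widehat{A}=1-\tfrac{1}{24}p_1$, and the hyperk\"ahler (hence Ricci-flat and half-conformally-flat) structure of $\widetilde{g}_{\ALF}$ allows the computation of $p_1$ as an explicit function of $V$ and its derivatives. Smoothness of $\widetilde{g}_{\ALF}$ at the monopole points $\tilde{p}_i$ (where $V$ has an integrable singularity) and the rapid decay of curvature as $|x|\to\infty$ (since $V\to 1$ so the geometry becomes flat) ensure that $\int_{\tX}p_1$ is finite and equal to an explicit constant depending only on $k$.

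For the eta-form term, Lemma~\ref{qmtn.2} tells us that the induced spin structure on the $\bbS^1$-fibres is trivial, so the fibrewise Dirac operator $\eth_\kappa$ is (up to a scaling depending on the base) $-i\partial_\theta$ on the fibre circle, with integer spectrum and one-dimensional kernel spanned by constant spinors. The perturbation $A=\Pi_{\ker\eth_\kappa}$ moves the $0$-eigenvalue to $+1$, producing a precisely controlled spectral asymmetry. Plugging this into the Bismut superconnection and using the Mellin--Barnes evaluation of the fibrewise eta as in \cite{Bismut-Cheeger}, the degree-$2$ component of $\widehat{\eta}(\widetilde{\eth}_0+A)$ becomes a universal constant times the connection curvature $d\alpha/(2\pi)$ at infinity; pairing with $[\bbS^2]$ produces a multiple of the degree $k$ of the $\bbS^1$-bundle $\Phi:\pa\tX\to\bbS^2$.

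The main obstacle is verifying that the two $k$-dependent numerical constants obtained above cancel exactly. A more conceptual route, which would avoid these explicit evaluations, is to exploit the hyperk\"ahler structure on $\tX$ to produce a $\Cl(0,3)$-action on spinors that anti-commutes with $\widetilde{\eth}^+_{\ALF}$ and, if the perturbation $\tQ$ is averaged to be compatible with this action, provides a skew pairing of $\ker(\widetilde{\eth}^+_{\ALF}+\tQ)$ with the cokernel, forcing the index to vanish directly. I would first attempt this conceptual approach and fall back on the direct Gibbons--Hawking/eta-form calculation if the symmetry argument cannot be made rigorous for this choice of Fredholm perturbation.
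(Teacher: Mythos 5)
Your opening move is the same as the paper's: apply the index formula (here with $\Gamma$ trivial, so Theorem~\ref{ad.10} reduces to the formula of \cite{Albin-Rochon1}), and show that the bulk $\widehat{A}$-integral cancels against the eta-form term. But the proof is not actually completed: you leave both $k$-dependent constants unevaluated and explicitly flag their cancellation as "the main obstacle." That cancellation is the entire content of the proposition. The paper evaluates the bulk term not by integrating $p_1$ from the Gibbons--Hawking ansatz, but by the identity $\widehat{A}(\tX)_{[4]}=-\tfrac18 L(\tX)_{[4]}$ combined with the signature formula for fibred-boundary metrics from \cite{DW07}, which gives $\int_{\tX}L=\tau(\tX)+\tfrac12(\tfrac{e}{3}-\sign(e))$ with $\tau(\tX)=1-k$ and $e=k$, hence $\int_{\tX}\widehat{A}=\tfrac{k}{12}$; and it evaluates the eta-form term via Zhang's computation for circle bundles, $\int_{\bbS^2}\widehat{\eta}(\widetilde{\eth}_0)=\tfrac{e}{12}=\tfrac{k}{12}$. (You also need to justify that the perturbation $A$ does not change the integrated eta form, and, as the paper does, that one may first deform $\widetilde{g}_{\ALF}$ to a product-type metric near $\pa\tX$ without changing the index, so that the formula applies.) Your direct curvature computation of $\int p_1$ is in principle feasible but is a substantial calculation you have not carried out.

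The fallback "conceptual" argument is wrong. On a hyperk\"ahler $4$-manifold the parallel $2$-forms $\omega_I,\omega_J,\omega_K$ are self-dual, so Clifford multiplication by them preserves chirality: the resulting $\Cl(0,3)$- (quaternionic) action acts on $S^+$ and on $S^-$ separately and gives no pairing between $\ker\widetilde{\eth}^+$ and $\operatorname{coker}\widetilde{\eth}^+=\ker\widetilde{\eth}^-$. At best it forces the index to be even. The compact test case shows the claimed vanishing cannot follow from hyperk\"ahlerness alone: on a K3 surface the Dirac index is $2$, not $0$. So you must fall back on the explicit evaluation of both terms, which is exactly the part of the argument that is missing.
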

\begin{proof}
A change of metric that does not change the Normal operator of $\widetilde{\eth}_{\ALF}$ will not change the index.  This means that for the purpose of computing the index, we can assume the metric $\widetilde{g}_{\ALF}$ is of product-type, that is, of the form
\begin{equation}
    \widetilde{g}_{\ALF}= \frac{dx^{2}}{x^{4}}+ \frac{\Phi^{*}h}{x^{2}} +\kappa
\label{qmtn.4a}\end{equation}
near the boundary $\pa \tX$.
By \cite{Albin-Rochon1}, its index is given by
\[
     \Ind(\widetilde{\eth}^+_{\ALF}) = \int_{\tX} \widehat{A}(\tX;\widetilde{g}_{\ALF}) - \int_{\bbS^2} \widehat{A}(\bbS^2,h)\widehat{\eta}(\widetilde{\eth}_0 +A).
\]
So it suffices to compute explicitly the two terms on the right-hand side.  For the first term, notice that
\[
           \widehat{A}(\tX,\widetilde{g}_{\ALF})_{[4]}= -\frac{1}{8} L(\tX,\widetilde{g}_{\ALF})_{[4]}.  
\]
By the results of \cite{DW07}, we thus have
\[
     \int_{\tX} \widehat{A}(\tX,\widetilde{g}_{\ALF})= -\frac{1}{8} \int_{\tX} L(\tX, \widetilde{g}_{\ALF}) =-\frac{1}{8}( \tau(\tX) + \frac{1}{2} a-\lim\eta)
\]
where $\tau(\tX)$ is the signature of $\tX$ and the second term is an adiabatic limit given by \cite[5.4]{DW07}, 
\[
a-\lim\eta=   \frac{e}{3} - \sign(e),
\]
where $e$ is the Euler number of the circle bundle $\Phi: \pa \tX \to \bbS^2$.  In our case, $e= k$ and $\tau(X)= 1-k$, so that
\begin{equation}
    \int_{\tX} \widehat{A}(\tX,\widetilde{g}_{\ALF}) = -\frac{1}{8}( 1-k+ \frac{k}{3} -1)= \frac{k}{12}.
\label{qmtn.5}\end{equation}  
On the other hand, using the result of \cite{Zhang94}, we find that
\[
\int_{\bbS^2} \widehat{A}(\bbS^{2},h)\widehat{\eta}(\eth_0 +A)= \int_{\bbS^2} \widehat{A}(\bbS^2,h)\widehat{\eta}(\eth_0)= \frac{e}{12}= \frac{k}{12},
\]
from which the result follows.  
\end{proof}
\begin{remark}
For the conformally related fibred cusp metric $x^2 \widetilde{g}_{\ALF}$, the associated Dirac operator is Fredholm by the criterion of \cite{Vaillant}.  Using the index formula of \cite{Vaillant} and \cite{DW07} as above, one can compute that the index is also zero in this case.  We leave the details to the reader.
\label{qmtn.6}\end{remark}
Suppose now that $\Gamma$ is a finite cyclic subgroup of $\SO(3)$ acting freely on the monopole points $p_1,\ldots,p_k$, where we assume the origin of $\bbR^3$ is the center of mass of the monopole points.  According to \cite[Proposition~2.7]{HV09} and the discussion at the end of \cite[\S~7]{Wright2011}, the action of $\Gamma$ can be lifted to a free action on $\tX$ by isometries.  The metric $\widetilde{g}_{\ALF}$ then descends to the quotient $X= \tX/\Gamma$ to give an $\cF$-metric $g_{\ALF}$.  

If we assume that $|\Gamma |$ is odd, then we know from \cite[Proposition~3G.1]{Hatcher} that the quotient map $q:\tX\to X$ induces an injection
\[
         q^*: H^*(X;\bbZ_2)\to H^*(\tX;\bbZ_2).
\] 
Since $\tX$ is spin with a unique spin structure, this forces $X$ to also be spin with a unique spin structure.  This means the action of $\Gamma$ lifts to an action on the spinor bundle and we get a corresponding Dirac operator $\eth_{\ALF}$ on the quotient $X$.  This operator $\eth_{\ALF}$ is not Fredholm.  However, since the boundary family $\widetilde{\eth}_0$ is $\Gamma$-invariant as well as the family of projections $A$, we see the perturbation $\tQ\in \Psi^{-\infty}_{\Phi}(\tX;\tS^+,\tS^-)$ can be chosen to be $\Gamma$-invariant.  This gives a corresponding operator $Q\in \Psi^{-\infty}_{\cF}(X;S^+,S^-)$ on $X$ such that $\eth_{\ALF}^+ +Q$ is Fredholm.  Its index can be computed using Theorem~\ref{ad.10}.  We will consider the special case where $\Gamma= \bbZ_k$ and $(\tX,\widetilde{g}_{\ALF})$ is a gravitational instanton of type $A_{k-1}$.

\begin{proposition}
Suppose $k$ is odd and $\Gamma= \bbZ_k$ acts freely by isometry on a gravitational instanton $(\tX;\widetilde{g}_{\ALF})$ of type $A_{k-1}$.  Then the index of the Fredholm operator $\eth_{\ALF}^+ + Q$ described above is given by
\[
   \Ind(\eth_{\ALF}^+ +Q) =  \frac{k^2-1}{12k^2}- \frac{1}{4k^2} \sum_{j=1}^{k^2-1} \csc\left( \frac{\pi j(1+k)}{k^2}\right)\csc\left(\frac{\pi j(1-k)}{k^2}\right).
\]
\label{qmtn.7}\end{proposition}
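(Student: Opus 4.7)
The plan is to apply Theorem~\ref{ad.10} to $\eth^+_{\ALF}+Q$ on $X=\tX/\bbZ_k$, reducing the problem to an explicit evaluation of $\rho_A$, and then to identify $\pa X$ as a specific lens space on which a classical eta formula applies.

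Since $\bbZ_k$ acts freely on $\tX$ by isometries, the integrand $\widehat{A}(X;g_{\ALF})$ pulls back to $\widehat{A}(\tX;\widetilde{g}_{\ALF})$ under the $k$-fold covering map, so
\[
\int_X \widehat{A}(X;g_{\ALF}) = \tfrac{1}{k}\int_{\tX}\widehat{A}(\tX;\widetilde{g}_{\ALF}) = \tfrac{1}{k}\cdot\tfrac{k}{12} = \tfrac{1}{12}
\]
by Proposition~\ref{qmtn.4}. The base $Y=\bbS^2$ of the fibration at infinity and the vertical family $\widetilde\eth_0+A$ are unchanged by passing to the quotient, so (using \cite{Zhang94} as in the proof of Proposition~\ref{qmtn.4})
\[
\tfrac{1}{|\Gamma|}\int_Y \widehat{A}(Y,h)\,\widehat\eta(\widetilde\eth_0+A) = \tfrac{1}{k}\cdot\tfrac{k}{12} = \tfrac{1}{12}.
\]
These two contributions cancel, so $\Ind(\eth^+_{\ALF}+Q) = \rho_A/2$.

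The heart of the argument is then the computation of $\rho_A = \tilde\eta/k - \eta$. Because $\widetilde Q$ is constructed $\bbZ_k$-equivariantly, the adiabatic cusp operators whose eta invariants define $\tilde\eta(\delta)$ and $\eta(\delta)$ are compatible under the $k$-fold Riemannian cover $\pa\tX\to\pa X$, and $\rho_A$ coincides with the classical APS $\rho$-invariant for this cover. I identify the boundaries explicitly: by the Gibbons--Hawking construction the $A_{k-1}$ Multi-Taub-NUT boundary is the circle bundle of degree $k$ over $\bbS^2$, so $\pa\tX\cong L(k,1)=\bbS^3/\bbZ_k^{(1)}$ with its unique spin structure (Lemma~\ref{qmtn.2}). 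Since $k^2$ is odd and any free finite group action on $\bbS^3$ with abelian quotient is conjugate to a subgroup of $U(2)$, the free lift of the rotational $\bbZ_k\subset\SO(3)$ action from $\tX$ to $\bbS^3$, combined with the deck transformation $\bbZ_k^{(1)}$, produces a cyclic group $\bbZ_{k^2}\subset U(2)$ acting freely. Tracking the rotation of the base $\bbS^2$ and the induced rotation on the $\bbS^1$ fibre over the two fixed poles shows that a generator is
\[
(z_1,z_2)\mapsto\bigl(e^{2\pi i(1+k)/k^2}z_1,\,e^{2\pi i(1-k)/k^2}z_2\bigr),
\]
so that $\pa X\cong L(k^2;\,1+k,\,1-k)$.

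Applying the classical formula for the Dirac eta invariant on a spin lens space,
\[
\eta\bigl(\eth_{L(p;q_1,q_2)}\bigr) = \frac{1}{2p}\sum_{j=1}^{p-1}\csc\!\frac{\pi jq_1}{p}\,\csc\!\frac{\pi jq_2}{p},
\]
together with the trigonometric identity $\sum_{j=1}^{k-1}\csc^2(\pi j/k) = (k^2-1)/3$, yields $\eta(\eth_{\pa\tX}) = (k^2-1)/(6k)$ and hence $\tilde\eta/k = (k^2-1)/(6k^2)$, while $\eta(\eth_{\pa X}) = \tfrac{1}{2k^2}\sum_{j=1}^{k^2-1}\csc\!\tfrac{\pi j(1+k)}{k^2}\csc\!\tfrac{\pi j(1-k)}{k^2}$. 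Dividing by two gives the stated formula. The main obstacle is twofold: carefully identifying the generator of $\bbZ_{k^2}$ from the geometry of the $A_{k-1}$ lift (which fixes the two $q_i$), and verifying that the smoothing perturbation $\widetilde Q$ contributes equally to $\tilde\eta$ and $\eta$ (with the $1/k$ weight) so that it drops out of $\rho_A$ and one is left with the APS $\rho$-invariant of the unperturbed Dirac operators on the two lens spaces.
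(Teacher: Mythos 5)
Your strategy is the paper's: apply Theorem~\ref{ad.10}, observe that the two integral terms cancel by the computation in Proposition~\ref{qmtn.4} (each contributes $\frac{1}{k}\cdot\frac{k}{12}$), reduce the index to the rho invariant, and evaluate that as a difference of Dirac eta invariants on the round lens spaces $\bbS^3/\bbZ_k$ and $\bbS^3/\bbZ_{k^2}$ with weights $(1+k,1-k)$ via the Dedekind-sum formula of \cite{Gilkey84}. The final numbers agree: you carry the $\rho_A/2$ of Theorem~\ref{ad.10} together with a lens-space eta formula normalized with $1/(2p)$, while the paper writes $\Ind=\rho_A$ and uses $1/(4p)$; the two conventions compensate. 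The identification $\pa X\cong L(k^2;1+k,1-k)$, which you flag as an obstacle, is treated at the same heuristic level in the paper, so I do not count that against you.

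The genuine gap is the other step you flag as unresolved: showing that the perturbation drops out of $\rho_A$. By Definition~\ref{ad.9}, $\rho_A$ is built from the eta invariants of the \emph{perturbed} operators $\widetilde{\eth}_{0}(\delta)+\widehat{N}_{\cusp}(\widetilde{Q}(\delta))(0)$, and it is insensitive only to deformations through \emph{invertible} self-adjoint operators; eta invariants jump by integers at non-invertible crossings, and since the answer is itself claimed to be an integer such jumps cannot be ignored. The assertion that $\widetilde{Q}$ ``contributes equally to $\tilde\eta$ and $\eta$ with the $1/k$ weight'' is not an argument. The paper closes this gap as follows: for $\delta\in(0,\delta_0]$ the lift of $\widetilde{g}(\delta)=\Phi^{*}h/\delta^{2}+\kappa$ to $\bbS^3$ is a left-invariant Berger metric of positive scalar curvature \cite{Hitchin74}, so the unperturbed $\widetilde{\eth}_{0}(\delta)$ is already invertible by Lichnerowicz; Hitchin's explicit description gives $(\widetilde{\eth}_{0}(\delta)+\epsilon A)^{2}=\widetilde{\eth}_{0}(\delta)^{2}+\epsilon^{2}A^{2}$, so the interpolating family $\delta\mapsto\widetilde{\eth}_{0}(\delta)+(\delta_0-\delta)\widehat{N}_{\cusp}(\widetilde{Q}(\delta))(0)$ can be arranged (by taking $\rho$ equal to $1$ on a large neighborhood of the zero section) to be invertible on all of $(0,\delta_0]$; and at $\delta=\delta_0$ the perturbation coefficient vanishes and the metric is the round one. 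This explicit path of invertible operators is what legitimizes replacing the perturbed eta invariants by $\eta(\widetilde{\eth}_{0}(\delta_0))$ and $\eta(\eth_{0}(\delta_0))$; without it, or some substitute, your evaluation of $\rho_A$ is unjustified.
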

\begin{proof}
Notice first that as in the proof of Proposition~\ref{qmtn.4}, we can replace $g_{\ALF}$ by a product-type metric to compute the index, that is, we can change $\widetilde{g}_{\ALF}$ near $\pa \tX$ to be of the form \eqref{qmtn.4a}
without changing the index.  By Theorem~\ref{ad.10} and Proposition~\ref{qmtn.4}, the index of $\eth_{\ALF}^+ +Q$ is then given by
\[
         \Ind(\eth_{\ALF}^+ +Q)= \rho_A = \frac{\widetilde{\eta}(\delta)}{k}- \eta(\delta).
\]
To compute $\rho_A$, notice that $h$ is a multiple of the round metric on $\bbS^2$.  By rescaling the boundary defining function if needed, we can in fact assume $h$ is precisely the standard round metric on $\bbS^2$.  Now, for $\delta>0$, the metric
\[
      \widetilde{g}(\delta)= \frac{\Phi^*h}{\delta^2}+ \kappa
\]
is a metric on the lens space $\bbS^3/\bbZ_k$.  For an appropriate value $\delta_0$ of $\delta$, it is in fact a multiple of the standard metric on this lens space, that is, the metric coming from the standard round metric on $\bbS^3$.  For other values of $\delta$, the lift $\hat{g}(\delta)$ of $\widetilde{g}(\delta)$ to $\bbS^3$ is (a multiple of) one of the $S^3$ left-invariant metrics considered in \cite[\S3.1]{Hitchin74} that are also right-invariant under $\bbS^1\subset \bbS^3$.  In particular, by \cite[\S~3.3]{Hitchin74}, these metrics have positive scalar curvature.  By Lichnerowicz formula, this means the Dirac operator $\widetilde{\eth}_0(\delta)$ is invertible.  

From the explicit description \cite[p.29]{Hitchin74} of the lift of $\widetilde{\eth}_0(\delta)$ to $\bbS^3$, we see that the family of projections $A$, seen as an operator on $\bbS^3/\bbZ_k$ acting fibrewise in the $\bbS^1$ fibres of $\bbS^3,$ is such that
\[
       (\widetilde{\eth}_0(\delta) +\epsilon A)^2= \widetilde{\eth}_0(\delta)^2 +\epsilon^2 A^2, \quad \epsilon\ge 0.
\]
In particular, the family $\delta\mapsto \widetilde{\eth}_0(\delta)+ (\delta_0-\delta)A$ is invertible for $\delta\in (0,\delta_0]$.  By choosing $\rho\in \cS(T^*\bbS^2\times \bbR)$ to be equal to 1 in a sufficiently large neighborhood of the zero section (so that its inverse Fourier transform approximates a delta function supported on the zero section), we can choose the family $\widetilde{Q}(\delta)$ in \eqref{ad.7} such that 
\[
                  (\delta_0-\delta)\widehat{N}_{\Phi}(\widetilde{Q}(\delta))(0)
\]
is as close as we want to the family $\delta\mapsto (\delta_0-\delta)A$, say in the operator norm of operators acting from 
the $H^1$ Sobolev space to $L^2$.  This means we can choose $\widetilde{Q}(\delta)$ such that 
\[
     \delta\mapsto \widetilde{\eth}_0(\delta) + (\delta_0-\delta)\widehat{N}_{\Phi}(\widetilde{Q}(\delta))(0)
\] 
is invertible for $\delta\in (0,\delta_0]$.  Thus, we can use this family at $\delta=\delta_0$ to compute $\rho_A$.  This gives
\[
     \rho_A= \frac{\eta(\widetilde{\eth}_0(\delta_0))}{k} - \eta(\eth_0(\delta_0)).
\]
These correspond to the eta invariants of Dirac operators on odd dimensional spherical space forms and can be written explicitly in terms of Dedekind sums \cite[Lemma~2.1]{Gilkey84},
\begin{equation}
\begin{gathered}
\eta(\widetilde{\eth}_0(\delta_0))= \frac{1}{4k} \sum_{j=1}^{k-1} \csc^2\left( \frac{\pi j}{k} \right)= \frac{k^2 -1}{12k};  \\
 \eta(\eth_0(\delta_0))= \frac{1}{4k^2} \sum_{j=1}^{k^2-1} \csc\left( \frac{\pi j (1+k)}{k^2}\right) \csc\left( \frac{\pi j (1-k)}{k^2}\right),
\end{gathered}
\label{qmtn.9}\end{equation}
from which the result follows.  
\end{proof}
This gives the following integrality result.
\begin{corollary}
For $k$ odd, the expression 
\[
\frac{k^2-1}{12k^2}- \frac{1}{4k^2} \sum_{j=1}^{k^2-1} \csc\left( \frac{\pi j(1+k)}{k^2}\right)\csc\left(\frac{\pi j(1-k)}{k^2}\right)
\]
is an integer.
\label{qmtn.10}\end{corollary}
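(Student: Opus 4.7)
The plan is extremely short: the expression in question has just been identified, in Proposition~\ref{qmtn.7}, as the Fredholm index of a specific operator, and indices of Fredholm operators are integers by definition. So the proof is essentially a one-line appeal to the preceding proposition.

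More precisely, I would proceed as follows. First, note that by Theorem~\ref{fc.13} (or equivalently by Proposition~\ref{ad.4} combined with the Fredholm criterion), the operator
\[
\eth^{+}_{\ALF}+Q \colon x^{\ell}H^{1+k}_{\cF}(X;S^{+})\to x^{\ell}H^{k}_{\cF}(X;S^{-})
\]
constructed just before Proposition~\ref{qmtn.7} is Fredholm, with an index that is independent of $k$ and $\ell$ and therefore a well-defined integer $\Ind(\eth^{+}_{\ALF}+Q)\in\bbZ$. Second, Proposition~\ref{qmtn.7} asserts the explicit identity
\[
\Ind(\eth^{+}_{\ALF}+Q) \;=\; \frac{k^{2}-1}{12k^{2}} \,-\, \frac{1}{4k^{2}} \sum_{j=1}^{k^{2}-1} \csc\!\left(\frac{\pi j(1+k)}{k^{2}}\right)\csc\!\left(\frac{\pi j(1-k)}{k^{2}}\right).
\]
Combining the two gives the integrality statement immediately.

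There is really no obstacle to overcome here, since the hard analytic and geometric work (setting up the rho invariant, computing the adiabatic limit of the eta invariant, evaluating the two Dedekind-sum type expressions) has already been carried out in Proposition~\ref{qmtn.7} and its supporting material. The corollary is purely a consequence of the fact that the left-hand side of the index formula is, by construction, an integer. Hence the proof I would write is simply: ``This is an immediate consequence of Proposition~\ref{qmtn.7}, since the index of a Fredholm operator is always an integer.''
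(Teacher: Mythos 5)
Your proposal is correct and is precisely the argument implicit in the paper: the corollary is stated immediately after Proposition~\ref{qmtn.7} as a direct consequence of the fact that the displayed expression equals the index of the Fredholm operator $\eth^{+}_{\ALF}+Q$, which is an integer. (Only a cosmetic remark: you reuse $k$ both for the Sobolev order and for the integer in the statement, which should be disambiguated, but this does not affect the argument.)
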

\begin{remark}
Numerical calculations show that this integer is 0 for $k\le 200$ odd.  It is tempting to speculate it should be equal to zero for all odd natural numbers.  
\label{qmtn.11}\end{remark}

\bibliography{pomfb}

\providecommand{\bysame}{\leavevmode\hbox to3em{\hrulefill}\thinspace}
\providecommand{\MR}{\relax\ifhmode\unskip\space\fi MR }
\providecommand{\MRhref}[2]{%
  \href{http://www.ams.org/mathscinet-getitem?mr=#1}{#2}
}
\providecommand{\href}[2]{#2}
\begin{thebibliography}{10}

\bibitem{Albin-Rochon1}
P.~Albin and F.~Rochon, \emph{Families index for manifolds with hyperbolic cusp
  singularities}, IMRN (2009), no.~4, 625--697.

\bibitem{Albin-Rochon2}
\bysame, \emph{A local families index formula for $\overline\partial$-operators
  on punctured {R}iemann surfaces}, Comm. Math. Phys. \textbf{289} (2009),
  no.~2, 483--527.

\bibitem{ALN04}
B.~Ammann, R.~Lauter, and V.~Nistor, \emph{On the geometry of {R}iemannian
  manifolds with a {L}ie structure at infinity}, Internat. J. Math. (2004),
  161--193.

\bibitem{ALN07}
\bysame, \emph{Pseudodifferential operators on manifolds with {L}ie structure
  at infinity}, Annals of Mathematics \textbf{165} (2007), 717--747.

\bibitem{Atiyah}
M.F. Atiyah, \emph{Elliptic operators, discrete groups and von {N}eumann
  algebras}, Ast\'erisque \textbf{32-33} (1976), 43--72.

\bibitem{APS1}
M.F. Atiyah, V.K. Patodi, and I.~M. Singer, \emph{Spectral asymmetry and
  {R}iemann geometry, {I}}, Math. Proc. Cambridge Philos. Soc \textbf{77}
  (1975), 43--69.

\bibitem{Atiyah-Singer0}
M.F. Atiyah and I.M. Singer, \emph{The index of elliptic operators on compact
  manifolds}, Bull. Amer. Soc. \textbf{69} (1963), 422--433.

\bibitem{Benameur-Piazza}
M.-T. Benameur and P.~Piazza, \emph{Index, eta and rho-invariants on foliated
  bundles}, Ast\'erisque (2009), no.~327, 201--287.

\bibitem{BGV}
Nicole Berline, Ezra Getzler, and Mich{\`e}le Vergne, \emph{Heat kernels and
  {D}irac operators}, Springer-Verlag, Berlin, 2004.

\bibitem{Bismut-Cheeger}
J-M Bismut and Cheeger J., \emph{$\eta$-invariants and their adiabatic limits},
  J. Amer. Math. Soc. \textbf{2} (1989), no.~1, 33--70.

\bibitem{EMM}
G.A.~Mendoza C.L.~Epstein and R.B. Melrose, \emph{Resolvent of the {L}aplacian
  on strictly pseudoconvex domains}, Acta Math. \textbf{167} (1991), 1--106.

\bibitem{CrainicFernandes}
M.~Crainic and L.~Fernandes, \emph{Integrability of {L}ie brackets}, Annals of
  Mathematics \textbf{157} (2003), 575--620.

\bibitem{DW07}
X.~Dai and G.~Wei, \emph{Hitchin-{T}horpe inequality for noncompact {E}instein
  $4$-manifolds}, Adv. Math. \textbf{214} (2007), 551--570.

\bibitem{Debord}
C.~Debord, \emph{Holonomy groupoids of singular foliations}, J. Differential
  Geom. \textbf{58} (2001), 467--500.

\bibitem{Dixmier}
J.~Dixmier, \emph{Les {$C^{*}$}-alg\`ebres et leurs repr\'esentations},
  Gauthier-Villars, Paris, 1964.

\bibitem{Epstein1972}
D.~B.~A. Epstein, \emph{Flows on three-manifolds}, Annals of mathematics
  \textbf{95} (1972), 66--82.

\bibitem{Gibbons-Hawking}
G.W. Gibbons and S.W. Hawking, \emph{Gravitational multi-instantons}, Phys.
  Lett. B \textbf{78} (1976), 430--432.

\bibitem{Gilkey84}
P.B. Gilkey, \emph{The eta invariant and the $k$-theory of odd spherical space
  forms}, Invent. Math. \textbf{76} (1984), 421--453.

\bibitem{Goette2011}
S.~Goette, \emph{Adiabatic limits of {S}eifert fibrations, {D}edekind sums, and
  the diffeomorphism type of certain $7$-manifolds}, preprint,
  arXiv:1108.5614v1.

\bibitem{Grieser-Hunsicker}
D.~Grieser and E.~Hunsicker, \emph{Pseudodifferential operator calculus for
  generalized {Q}-rank 1 locally symmetric spaces. {I}}, J. Funct. Anal.
  \textbf{257} (2009), 3748--3801.

\bibitem{Hatcher}
A.~Hatcher, \emph{Algebraic topology}, Cambridge University press, Cambridge,
  UK, 2002.

\bibitem{HHM}
T.~Hausel, E.~Hunsicker, and R.~Mazzeo, \emph{Hodge cohomology of gravitational
  instantons}, Duke Math. J. \textbf{122} (2004), 485--548.

\bibitem{Hitchin74}
N.~Hitchin, \emph{Harmonic {S}pinors}, Adv. Math. \textbf{14} (1974), 1--55.

\bibitem{HV09}
N.~Honda and J.~Viaclovsky, \emph{Conformal symmetries of self-dual hyperbolic
  monopole metrics}, preprint, arXiv:0902.2019.

\bibitem{Krainer07}
Thomas Krainer, \emph{Elliptic boundary problems on manifolds with
  polycylindrical ends}, J. Funct. Anal. \textbf{244} (2007), 351--386.

\bibitem{Lauter-Monthubert-Nistor}
R.~Lauter, B.~Monthubert, and V.~Nistor, \emph{Pseudodifferential analysis on
  continuous family groupoids}, Documenta Math. \textbf{5} (2000), 625--655.

\bibitem{Lauter-Moroianu}
R.~Lauter and S.~Moroianu, \emph{Homology of pseudodifferential operators on
  manifolds with fibered cusps}, T. Am. Soc. \textbf{355} (2003), 3009--3046.

\bibitem{Lauter-Moroianu2}
R.~Lauter and S.Moroianu, \emph{Fredholm theory for degenerate
  pseudodifferential operators on manifold with fibred boundaries}, Comm.
  Partial Differential Equations \textbf{26} (2001), 233--283.

\bibitem{LM}
H.B. Lawson and M-L Michelsohn, \emph{Spin geometry}, Princeton Univ. Press,
  Princeton, 1989.

\bibitem{LMP}
E.~Leichtnam, R.~Mazzeo, and P.~Piazza, \emph{The index of {D}irac operators on
  manifolds with fibered boundaries}, Bull. Belg. Math. Soc. Simon Stevin
  \textbf{13} (2006), no.~5, 845--855.

\bibitem{Lichnerowicz}
A.~Lichnerowicz, \emph{Spineurs harmoniques}, C.R. Acad. Sci. Paris
  \textbf{257} (1963), 7--9.

\bibitem{MazzeoEdge}
R.~Mazzeo, \emph{Elliptic theory of differential edge operators. {I}}, Comm.
  Partial Differential Equations \textbf{16} (1991), no.~10, 1615--1664.

\bibitem{Mazzeo-Melrose0}
R.~Mazzeo and R.B. Melrose, \emph{Meromorphic extension of the resolvent on
  complete spaces with asymptotically constant negative curvature}, J. Funct.
  Anal. \textbf{75} (1987), no.~2, 260--310.

\bibitem{Mazzeo-Melrose}
\bysame, \emph{Pseudodifferential operators on manifolds with fibred
  boundaries}, Asian J. Math. \textbf{2} (1999), no.~4, 833--866.

\bibitem{MelroseAPS}
R.B. Melrose, \emph{The {A}tiyah-{P}atodi-{S}inger index theorem}, A. K.
  Peters, Wellesley, Massachusetts, 1993.

\bibitem{Melrose_eta}
\bysame, \emph{The eta invariant and families of pseudodifferential operators},
  Math. Res. Lett. \textbf{2} (1995), no.~5, 541--561. \MR{96h:58169}

\bibitem{MelroseGST}
\bysame, \emph{Geometric scattering theory}, Cambridge University Press,
  Cambridge, 1995.

\bibitem{Melrose-Rochon06}
R.B. Melrose and F.~Rochon, \emph{Index in {K}-theory for families of fibred
  cusp operators}, K-theory \textbf{37} (2006), 25--104.

\bibitem{Moroianu}
S.~Moroianu, \emph{Fibered cusp versus d-index theory}, Rend. Semin. Mat.
  Padova \textbf{117} (2007), 193--203.

\bibitem{NWX}
V.~Nistor, A.~Weinstein, and P.~Xu, \emph{Pseudodifferential operators on
  groupoids}, Pacific J. Math. \textbf{189} (1999), 117--152.

\bibitem{Nye-Singer}
T.M.W. Nye and M.A. Singer, \emph{An {$L^{2}$}-index theorem for {D}irac
  operators on {$\bbS^{1}\times\bbR^{3}$}}, Journal of Functional Analysis
  \textbf{177} (2000), 203--218.

\bibitem{Scott1983}
P.~Scott, \emph{The geometry of 3-manifolds}, Bull. London Math. Soc.
  \textbf{15} (1983), 401--487.

\bibitem{Suvaina2011}
I.~Suvaina, \emph{{ALE} {R}icci-flat {K}\"ahler metrics and deformations of
  quotient surface singularities}, preprint, arXiv:1102.2672v1.

\bibitem{Thurston}
W.~Thurston, \emph{The geometry and topology of three-manifolds}, electronic
  edition of the 1980 notes distributed by Princeton University.

\bibitem{Vaillant}
B.~Vaillant, \emph{Index and spectral theory for manifolds with generalized
  fibred cusps}, Ph.D. dissertation, Bonner Math. Schriften 344, Univ. Bonn.,
  Mathematisches Institut, Bonn (2001), available online at arXiv:
  math/0102072v1.

\bibitem{Wright2011}
E.P. Wright, \emph{Quotients of gravitational instantons}, preprint,
  arXiv:1102.2442v2.

\bibitem{Zhang94}
W.~Zhang, \emph{Circle bundles, adiabatic limits of $\eta$-invariants and
  {R}okhlin congruences}, Ann. Inst. Fourier \textbf{44} (1994), no.~1,
  249--270.

\end{thebibliography}
\bibliographystyle{amsplain}

\end{document}